\def\bw{{\mathbf{w}}}
\def\bu{{\mathbf{u}}}
\def\bv{{\mathbf{v}}}
\def\bn{{\mathbf{n}}}
\def\curl{{\operatorname{curl}}}
\newtheorem{remark}{Remark}[section]
\newtheorem{assumption}{Assumption}[section]
\def\3bar{{|\hspace{-.02in}|\hspace{-.02in}|}}
\numberwithin{equation}{section}
\def\bn{\boldsymbol{n}}
\def\bu{\boldsymbol{u}}
\def\bH{\boldsymbol{H}}
\def\bw{\boldsymbol{w}}
\def\bv{\boldsymbol{v}}
\def\bxi{\boldsymbol{\xi}}
\def\d{{\mathrm d}}
\def\Omega{{\varOmega}}
\crefname{hypothesis}{Hypothesis}{Hypotheses}
\title{Unfitted finite element method for the quad-curl interface problem}
\author{ Hailong Guo\thanks{School of Mathematics and Statistics, The University of Melbourne, Parkville, VIC 3010, Australia(\email{hailong.guo@unimelb.edu.au}).}
\and Mingyan Zhang\thanks{Corresponding author. Beijing Computational Science Research Center, Beijing 100193, China(\email{myzhang@csrc.ac.cn}).}
\and Qian Zhang\thanks{Department of Mathematical Sciences, Michigan Technological University, Houghton, MI 49931, USA 
 (\email{qzhang15@mtu.edu}).}
\and Zhimin Zhang\thanks{Department of Mathematics, Wayne State University, Detroit, MI 48202, USA(\email{ag7761@wayne.edu}).}
}
\begin{document}
\nolinenumbers
\maketitle
\begin{abstract}
In this paper, we introduce a novel unfitted finite element method to solve the quad-curl interface problem. We adapt Nitsche's method for curlcurl-conforming elements and double the degrees of freedom on interface elements. To ensure stability, we incorporate ghost penalty terms and a discrete divergence-free term. We establish the well-posedness of our method and demonstrate an optimal error bound in the discrete energy norm. We also analyze the stiffness matrix's condition number.  Our numerical tests back up our theory on convergence rates and condition numbers.
\end{abstract}

\begin{keywords}
Quad-curl problem, Interface problem, Unfitted mesh, Stabilized finite element methods, Ghost penalty, Discrete divergence-free
\end{keywords}


\section{Introduction}

The quad-curl problem presents a significant challenge in multiphysics simulations, especially in magnetohydrodynamics (MHD) when modeling magnetized plasmas \cite{Biskamp1996magnetic,Zheng2011nonconforming}. It also plays a pivotal role in approximating the Maxwell transmission eigenvalue problem within inverse electromagnetic scattering theory \cite{Monk2012finite,Cakoni2007variational,Cakoni2010inverse}. Over the years, numerous numerical methods have been devised to address the quad-curl problem. For instance, mixed schemes have been widely explored in the literature \cite{Monk2012finite, Sun2016mixed, Zhang2018mixed, Wang2019new}. Hong et al. introduced a discontinuous Galerkin (DG) method based on the N\'ed\'elec finite element space \cite{Hong2012discontinuous}. Furthermore, nonconforming and low-order finite element spaces have been developed \cite{Zheng2011nonconforming,Huang2023nonconforming,Zhang2022nonconforming}. Recently, some researchers, including one of the authors, explored the construction of conforming finite element spaces, as referenced in \cite{Hu2022family,HuZhangZhang2020,Zhang2019h,Zhang2020family}. It is worth noting that authors in \cite{HuZhangZhang2020} construct a family conforming finite element including complete polynomials and the lowest-order element has only $13$ degrees of freedom.

In scientific and engineering research, addressing interface problems is of fundamental importance. Such problems often feature discontinuous coefficients within the partial differential equations, evident at the interface. Numerous numerical methods for interface problems with arbitrary but smooth interfaces have been extensively studied during the last three decades.  Broadly, these methods fall into two categories: fitted mesh methods and unfitted methods. While fitted mesh methods have been investigated for elliptic interface problems as seen in \cite{Babuvska1970finite, Chen1998finite, Barrett1987fitted, Xu1982}, generating body-fitted meshes, especially when dealing with complex and moving interfaces, remains a notably challenging and time-consuming task. 

To mitigate the challenge inherent in mesh generation, considerable attention has been directed towards unfitted methods, wherein the interface is allowed to intersect the mesh. Such methods have gained significant interest since the pioneering work of Peskin \cite{Peskin1977}, where he proposed a first-order approach to approximate delta functions using smoother functions. To improve convergence rates, Leveque and Li \cite{Leveque1994immersed} proposed the immersed interface methods which utilize interface conditions to model discontinuities and adapt them to design special finite difference schemes at grid points near the interfaces. This concept was subsequently extended to the finite element framework in \cite{Li1998immersed,LiLinWu2003}, where basis functions were tailored to satisfy homogeneous interface conditions. 

Another widely adopted unfitted method is the Cut Finite Element Method (CutFEM), originally proposed by Hansbo et al. in \cite{Hansbo2001}, building upon Nitsche's penalty method. CutFEM effectively addresses elliptic interface problems by doubling the degrees of freedom on interface elements and introducing penalty terms to weakly enforce transmission conditions across the interface. Researchers have successfully applied this method across various domains, including elasticity interface problems \cite{Hansbo2004finite}, Stokes interface problems \cite{Hansbo2014}, $H(\text{curl})$ and $H(\text{div})$ interface problems \cite{Liu2020interface,Li2023reconstructed}, biharmonic interface problems \cite{Cai2021nitsche}, and for interface eigenvalue problems \cite{GuoYangZ2021, GuoYangZhu2021}.  For a thorough discussion on CutFEM, readers are referred to the survey paper    \cite{Burman2015}.

From our thorough literature review, quad-curl interface problems have yet to be addressed.
Such problems arise in Magnetohydrodynamics and Maxwell transmission problems when dealing with heterogeneous materials. In light of this gap, the objective of this paper is to develop an unfitted finite element method, i.e. CutFEM, for the quad-curl interface problem, and in the meantime establish a complete theory of existence, uniqueness, and convergence of numerical solutions. Three primary challenges are addressed in the formulation and analysis of our numerical method. First, we introduce two ghost penalty terms (c.f. \cite{Massing}) and $\sum_{K\in\mathcal{T}_h}\int_{K}h^{-2}\nabla\cdot\bu_h \nabla\cdot\bv_h \ dx$ for improving the stability of numerical method and ensuring the divergence-free condition. 
Second, the interpolation estimate in the discrete energy norm (see Lemma \ref{InterpolationEstimate}) and the weak Galerkin orthogonality relation (see Lemma \ref{Weakorthogonality}) are established. By combining these results with the estimate for the ghost penalty (see Lemma \ref{WeakConsistency}), we derive error estimates between the numerical solution $\mathbf{u}_h$ and the exact solution $\mathbf{u}$ in the discrete energy norm.
Finally, we present estimates between the degree of freedom vector $W$ and function $\bw_h$ for curlcurl-conforming finite element (see Lemma \ref{Vectortofunc}), which differ from the estimates obtained using Lagrange elements. By employing Poincar\'e-type inequalities, as demonstrated in Lemma \ref{poincare}, we can obtain the condition number of the stiffness matrix.

The rest of this paper is organized as follows. In Section \ref{Sec:pre}, we introduce some fundamental notation related to Sobolev space and propose the quad-curl interface problem along with its variational formulation. In Section \ref{Sec:discrete}, we present the discrete weak formulation and the lowest order curl-curl conforming finite element space containing piecewise $P_2$ polynomials. The existence and uniqueness of the discrete scheme are demonstrated in Section \ref{Sec:wellposedness}. Subsequently, in Section \ref{Sec:error}, we establish error estimates for the numerical schemes in terms of the energy norm, while Section \ref{Sec:conditionnumber} provides the condition number estimate for the stiff matrix. To validate the accuracy of the proposed method, a series of numerical tests is presented in Section \ref{Sec:Numerical}. Finally, the main result is summarized and concluded in Section \ref{Sec:conclusion}.

\section{Preliminaries}\label{Sec:pre} In this section, we provide necessary background materials. In subsection \ref{ssec:not}, we introduce some notation used throughout the paper. In subsection \ref{ssec:interproblem}, we define the quad-curl interface problem. Finally, in subsection \ref{ssec:weakform}, we derive the weak form for the quad-curl interface problem and demonstrate its well-posedness.

\subsection{Notation}\label{ssec:not} Let $\Omega\subset \mathbb{R}^2$ be a bounded convex polygonal domain with Lipschitz boundary $\partial \Omega$.  Throughout this paper, we adopt the standard notation for Sobolev spaces and their associate norms given in \cite{BrennerScott2008, Ci2002, Evans2008}. 
Let $D$ be a subdomain of $\Omega$. We denote by $H^m(D)$ the Sobolev spaces equipped with the norm $|\cdot|_{m,D}$ and $H^m_0(D)$ the subspace characterized by functions with vanishing trace on the boundary $\partial D$. Specifically, when $m=0$, the space $H^0(D)$ coincides with $L^2(D)$, which is endowed with the conventional $L^2$-inner product denoted by $(\cdot,\cdot)_D$, along with the associated norm $\|\cdot\|_D$.  When dealing with the entire domain $\Omega$, the subscript $\Omega$ is omitted for simplicity.  Furthermore, we utilize $\boldsymbol{H}^m(D)$ and $\boldsymbol{L}^2(D)$ to represent the vector-valued Sobolev spaces $(H^m(D))^2$ and  $(L^2(D))^2$ respectively.

Consider vector-valued functions $\boldsymbol{u}=(u_1,u_2)^{\mathrm T}$ and $\boldsymbol{w}=(w_1,w_2)^{\mathrm T}$ whose cross product is defined as $\boldsymbol{u}\times\boldsymbol{w}=u_1w_2-u_2w_1$. We introduce the curl operator by $\nabla\times \bm u=\partial u_2/\partial x_1-\partial u_1/\partial x_2$ and the divergence operator by $\nabla\cdot\bm u=\partial u_1/\partial x_1+\partial u_2/\partial x_2$. Additionally, for a scalar function $v$, we define the curl of the scalar function by $\bm{\nabla}\times v=(\partial v/\partial x_2,-\partial v/\partial x_1)^{\mathrm T}$. 

For the quad curl problem, we require the following function spaces 
\begin{equation*}
\begin{aligned}
	H(\text{curl}^2;D) &:= \left\{\boldsymbol{u} \in \boldsymbol{L}^2(D): \nabla\times \boldsymbol{u} \in L^2(D), \ \bm{\nabla}\times\nabla\times\boldsymbol{u} \in \boldsymbol{L}^2(D) \right\}, \\
	H(\text{div};D) &:= \left\{\boldsymbol{u} \in \boldsymbol{L}^2(D): \nabla\cdot \boldsymbol{u} \in L^2(D) \right\},
\end{aligned}
\end{equation*}
where their inner products are given by
\begin{equation*}
\begin{aligned}
	(\boldsymbol{u},\boldsymbol{v})_{H(\text{curl}^2;D)} &:= (\boldsymbol{u},\boldsymbol{v})_D + (\nabla\times\boldsymbol{u},\nabla\times\boldsymbol{v})_D + (\bm{\nabla}\times\nabla\times\boldsymbol{u},\bm{\nabla}\times\nabla\times\boldsymbol{v})_D, \\
	(\boldsymbol{u},\boldsymbol{v})_{H(\text{div};D)} &:= (\boldsymbol{u},\boldsymbol{v})_D + (\nabla\cdot\boldsymbol{u},\nabla\cdot\boldsymbol{v})_D,
\end{aligned}
\end{equation*}
and their corresponding norms are defined as
\begin{equation*}
\|\boldsymbol{u}\|_{H(\text{curl}^2;D)} = \sqrt{(\boldsymbol{u},\boldsymbol{v})_{H(\text{curl}^2;D)}}, \quad \|\boldsymbol{u}\|_{H(\text{div};D)} = \sqrt{(\boldsymbol{u},\boldsymbol{v})_{H(\text{div};D)}}.
\end{equation*}

To impose the boundary conditions and divergence-free condition, we introduce the following subspaces
\begin{equation*}
\begin{aligned}
	H_0(\text{curl}^2;D) &:= \left\{\boldsymbol{u} \in H(\text{curl}^2;D): \boldsymbol{n}\times\boldsymbol{u}=0 \text{ and } \nabla\times \boldsymbol{u}=0 \text{ on } \partial D \right\}, \\
	H(\text{div}^0;D) &:= \left\{\boldsymbol{u} \in H(\text{div};D): \nabla\cdot\boldsymbol{u}=0 \text{ in } D \right\}.
\end{aligned}
\end{equation*}

Throughout this paper, we denote by $C$ a generic positive constant whose value may vary in different occurrences but remains independent of both the mesh size and the location of the interface.

\subsection{The quad-curl interface problem} \label{ssec:interproblem}

Consider a $C^2$-smooth curve $\Gamma$ that divides the domain $\Omega$ into two disjoint subdomains $\Omega_+$ and $\Omega_-$, as depicted in Fig. \ref{fig:domain}. Let $\boldsymbol{n}=(n_1,n_2)^{\mathrm T}$ denote the unit normal vector to $\Gamma$ pointing from $\Omega_-$ to $\Omega_+$.

\begin{figure}[h]
	\begin{center}
	\includegraphics[width=0.35\textwidth]{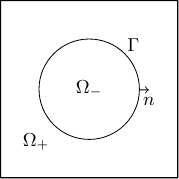}
	\end{center}
	\caption{An illustration of domain $\Omega$ with a circular interface $\Gamma$ and subdomains $\Omega_-$, $\Omega_+$.}
	\label{fig:domain}
\end{figure}

For the operators $\text{d}=\text{div}$ and $\text{d}=\curl\curl$, we define the following piecewise Sobolev spaces
\begin{equation*}
\begin{aligned}
\bm{H}(\text{d};\Omega_-\cup\Omega_+)&:=\left\{\boldsymbol{v}\in\boldsymbol{L}^2(\Omega): \boldsymbol{v}|_{\Omega_j}\in\bm{H}(\text{d},\Omega_j),\ j=\pm\right\},\\
\boldsymbol{H}^s(\Omega_-\cup\Omega_+)&:=\left\{\boldsymbol{v}\in\boldsymbol{L}^2(\Omega): \boldsymbol{v}|_{\Omega_j}\in\boldsymbol{H}^s(\Omega_j),\ j=\pm\right\}.
\end{aligned}
\end{equation*}
The norm on $\bm{H}(\text{d};\Omega_-\cup\Omega_+)$ is defined by
\begin{align*}
	\|\bm v\|^2_{\bm{H}(\text{d};\Omega_-\cup\Omega_+)}:=\left(\|\bm v\|^2_{\bm{H}(\text{d},\Omega_{-})}+\|\bm v\|^2_{\bm{H}(\text{d},\Omega_{+})}\right)^{1/2},
\end{align*}
Moreover, we define the space $H(\text{div}^0;\Omega_-\cup\Omega_+)\subset H(\text{div};\Omega_-\cup\Omega_+)$ to be the set of piecewise divergence-free functions, and $H_0(\text{curl}^2;\Omega_-\cup\Omega_+)\subset H(\text{curl}^2;\Omega_-\cup\Omega_+)$ to be the set of functions with vanishing trace on $\partial\Omega$, i.e., $\boldsymbol{n}\times\boldsymbol{v}=\nabla\times\boldsymbol{v}=0$ on $\partial\Omega$ for $\bm v\in H_0(\text{curl}^2;\Omega_-\cup\Omega_+)$.

Let us now consider the quad-curl interface problem described by the following set of equations
\begin{subequations}\label{OriginProblem}
	\begin{alignat}{2}
	\bm{\nabla}\times\nabla\times\left(\alpha\bm{\nabla}\times\nabla\times \boldsymbol{u}\right) +\gamma \boldsymbol{u} &= \boldsymbol{f}&\quad &\text{in}\ \ \Omega_-\cup\Omega_+,\label{PDE}\\
	\nabla\cdot \bm u &=0&\quad&\text{in}\ \ \Omega_-\cup\Omega_+,\\
	\nabla\times \boldsymbol{u}=\boldsymbol{n}\times \boldsymbol{u} &=0&\quad &\text{on}\ \ \partial\Omega,\label{boundaryCon}\\
	[\![\boldsymbol{n}\times \boldsymbol{u}]\!]&=0&\quad&\text{on}\ \ \Gamma,\label{jumpcon1}\\
	[\![\nabla\times \boldsymbol{u}]\!] &=0&\quad &\text{on}\ \ \Gamma,\label{jumpcon2}\\
	[\![\boldsymbol{n}\times(\alpha\bm{\nabla}\times\nabla\times \boldsymbol{u})]\!]&=\varphi_3&\quad &\text{on}\ \ \Gamma,\label{jumpcon3}\\
	[\![\nabla\times(\alpha\bm{\nabla}\times\nabla\times \boldsymbol{u})]\!]&=\varphi_4&\quad &\text{on}\ \ \Gamma,\label{jumpcon4}\\
	[\![\boldsymbol{n}\cdot \boldsymbol{u}]\!]&=0&\quad&\text{on}\ \ \Gamma.\label{jumpcon5}
	\end{alignat}
\end{subequations}

In this problem, the coefficient $\alpha$ is a piecewise constant function in $\Omega$, specifically $\alpha = \alpha_i>0$ in $\Omega_i$ for $i =\pm$. The constant $\gamma\geq 0$. The function $\boldsymbol{f}\in {H}(\text{div}^0;\Omega_-\cup\Omega_+)$ is given, as well as the functions $\varphi_3\in H^{-1/2}(\Gamma)$ and $\varphi_4\in H^{1/2}(\Gamma)$. The notation $[\![\cdot]\!]_\Gamma$ indicates the jump across $\Gamma$, that is, $[\![v]\!]=v^-|_{\Gamma}-v^+|_{\Gamma}$. In the following, we abbreviate \( v|_{\Omega_-} \) and \( v|_{\Omega_+} \) as \( v^- \) and \( v^+ \), respectively.

\subsection{Weak formulation}\label{ssec:weakform}
To derive the weak formulation for the quad-curl interface problem \eqref{PDE}-\eqref{jumpcon5}, we begin by multiplying both sides of equation \eqref{PDE} by a test function $\boldsymbol{v}\in H_0(\text{curl}^2;\Omega)$ and then integrate by parts twice.
\begin{equation}\label{integrationbyparts}
\begin{aligned}
&\int_{\Omega_-\cup\Omega_+}\left(\alpha\bm{\nabla}\times\nabla\times \boldsymbol{u}\right)\cdot\left(\bm{\nabla}\times\nabla\times \boldsymbol{v}\right)\mathrm{d} A + \int_{\Omega_-\cup\Omega_+}\gamma\boldsymbol{u}\cdot\boldsymbol{v}\mathrm{d} A\\
&+\int_{\Gamma}\boldsymbol{n}\times\left(\alpha\bm{\nabla}\times\nabla\times\boldsymbol{u}\right)^-\cdot\left(\nabla\times\boldsymbol{v}\right)^-\mathrm{d} s - \int_{\Gamma}\nabla\times\left(\alpha\bm{\nabla}\times\nabla\times\boldsymbol{u}\right)^-\cdot\left(\boldsymbol{n}\times\boldsymbol{v} \right)^-\mathrm{d} s\\
&-\int_{\Gamma}\boldsymbol{n}\times\left(\alpha\bm{\nabla}\times\nabla\times\boldsymbol{u}\right)^+\cdot\left(\nabla\times\boldsymbol{v}\right)^+\mathrm{d} s + \int_{\Gamma}\nabla\times\left(\alpha\bm{\nabla}\times\nabla\times\boldsymbol{u}\right)^+\cdot\left(\boldsymbol{n}\times \boldsymbol{v} \right)^+\mathrm{d} s\\
&=\int_{\Omega_-\cup\Omega_+}\boldsymbol{f}\cdot\boldsymbol{v}\mathrm{d} A.
\end{aligned}
\end{equation}

Denote by $\{\!\!\{\xi\}\!\!\}=\kappa_1\xi^-+\kappa_2\xi^+$ the average of $\xi$ on $\Gamma$, where $\kappa_1$ and $\kappa_2$ are constants with $\kappa_1$ + $\kappa_2 = 1$. Recalling the definition of $[\![\xi]\!]$, $\xi^-$ and $\xi^+$ can be expressed in terms of the averages and jumps
\begin{equation}\label{jump_ave}
\xi^-=\{\!\!\{\xi\}\!\!\}+\kappa_2[\![\xi]\!],\quad \xi^+=\{\!\!\{\xi\}\!\!\}-\kappa_1[\![\xi]\!].
\end{equation}
By substituting the expressions from \eqref{jump_ave} into \eqref{integrationbyparts}, we establish a weak formulation of (\ref{OriginProblem}). This formulation seeks $\boldsymbol{u}\in \bm W := H_0(\text{curl}^2;\Omega)\cap H(\text{div}^0;\Omega)$ such that  
\begin{equation}\label{Weak}
a(\boldsymbol{u},\boldsymbol{v})=\mathcal{L}(\boldsymbol{v}),\ \ \text{for all } \boldsymbol{v}\in \bm W,
\end{equation}
where the bilinear form $a:\bm W\times\bm W\to \mathbb{R}$ and the linear form $\mathcal{L}\in \bm W'$ are defined as follows
\begin{equation*}
\begin{aligned}
a(\boldsymbol{u},\boldsymbol{v})&:=\int_{\Omega_{-}\cup\Omega_{+}}\left(\alpha\bm{\nabla}\times\nabla\times \boldsymbol{u}\right)\cdot\left(\bm{\nabla}\times\nabla\times \boldsymbol{v}\right)\mathrm{d} A+\int_{\Omega_{-}\cup\Omega_{+}}\gamma\boldsymbol{u}\cdot\boldsymbol{v} \mathrm{d} A,\\
\mathcal{L}(\boldsymbol{v})&:=\int_{\Omega_-\cup\Omega_{+}}\boldsymbol{f}\cdot\boldsymbol{v}\mathrm{d} A-\int_{\Gamma}\varphi_3\nabla\times \bv\mathrm{d} s+\int_{\Gamma}\varphi_4\boldsymbol{n}\times \bv\mathrm{d} s.
\end{aligned}
\end{equation*}

\begin{theorem}
	There exists a unique solution $\bu\in \bm W$ to the weak formulation \eqref{Weak}.
\end{theorem}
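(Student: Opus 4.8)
The plan is to establish existence and uniqueness by verifying the hypotheses of the Lax--Milgram theorem for the bilinear form $a(\cdot,\cdot)$ and the functional $\mathcal{L}$ on the Hilbert space $\bm W = H_0(\text{curl}^2;\Omega)\cap H(\text{div}^0;\Omega)$, equipped with the norm $\|\cdot\|_{H(\text{curl}^2;\Omega)}$. Three ingredients must be checked: continuity of $a$, continuity of $\mathcal{L}$, and coercivity of $a$ on $\bm W$. Of these, coercivity is the crux and the others are routine.

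Continuity of $a$ follows immediately from the Cauchy--Schwarz inequality together with the boundedness of the piecewise-constant coefficient, giving $|a(\bu,\bv)|\le \max(\alpha_+,\alpha_-,\gamma)\,\|\bu\|_{H(\text{curl}^2;\Omega)}\|\bv\|_{H(\text{curl}^2;\Omega)}$. For continuity of $\mathcal{L}$, the volume term is bounded by $\|\bf\|\,\|\bv\|$, while the two interface terms require trace estimates and are interpreted as duality pairings on $\Gamma$. Since $\bv\in\bm W$ satisfies $\bm{\nabla}\times\nabla\times\bv\in\boldsymbol{L}^2$, the scalar field $\nabla\times\bv$ belongs to $H^1(\Omega)$ and hence has a trace in $H^{1/2}(\Gamma)$ controlled by $\|\bv\|_{H(\text{curl}^2;\Omega)}$; pairing it against $\varphi_3\in H^{-1/2}(\Gamma)$ bounds the first interface term. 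Likewise the tangential trace $\bn\times\bv$ lies in $H^{-1/2}(\Gamma)$ with norm controlled by $\|\bv\|_{H(\text{curl};\Omega)}$, and pairing against $\varphi_4\in H^{1/2}(\Gamma)$ bounds the second. Altogether $|\mathcal{L}(\bv)|\le C\|\bv\|_{H(\text{curl}^2;\Omega)}$.

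The main obstacle is coercivity in the degenerate case $\gamma=0$, where $a(\bu,\bu)=\int_{\Omega_-\cup\Omega_+}\alpha\,|\bm{\nabla}\times\nabla\times\bu|^2\,\mathrm dA$ controls only the top-order seminorm, so I must recover the full norm through a chain of Poincar\'e--Friedrichs inequalities. First, the boundary conditions $\bn\times\bu=\nabla\times\bu=0$ on $\partial\Omega$ show that the scalar $w:=\nabla\times\bu$ lies in $H^1_0(\Omega)$, whence the scalar Poincar\'e inequality gives $\|\nabla\times\bu\|=\|w\|\le C\|\nabla w\|=C\|\bm{\nabla}\times\nabla\times\bu\|$. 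Second, since $\bu\in H_0(\text{curl};\Omega)\cap H(\text{div}^0;\Omega)$ on the convex, hence simply connected, domain $\Omega$, the standard Poincar\'e--Friedrichs inequality for tangentially-vanishing divergence-free fields yields $\|\bu\|\le C\|\nabla\times\bu\|$. Chaining these two bounds controls every term of $\|\bu\|_{H(\text{curl}^2;\Omega)}^2$ by $\|\bm{\nabla}\times\nabla\times\bu\|^2$, so with $\alpha_{\min}=\min(\alpha_+,\alpha_-)>0$ one obtains $\|\bu\|_{H(\text{curl}^2;\Omega)}^2\le (C/\alpha_{\min})\,a(\bu,\bu)$, which is coercivity. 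With continuity of $a$ and $\mathcal{L}$ and this coercivity in hand, Lax--Milgram delivers the unique solution $\bu\in\bm W$.
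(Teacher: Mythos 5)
Your proposal is correct and follows essentially the same route as the paper's proof: both verify the Lax--Milgram hypotheses on $\bm W$, obtaining coercivity by observing that $\nabla\times\bu\in H^1_0(\Omega)$ (so the scalar Poincar\'e inequality gives $\|\nabla\times\bu\|\leq C\|\bm{\nabla}\times\nabla\times\bu\|$) and chaining this with the Friedrichs inequality $\|\bu\|\leq C\|\nabla\times\bu\|$ for divergence-free fields with vanishing tangential trace, while continuity of $a(\cdot,\cdot)$ and $\mathcal{L}$ comes from Cauchy--Schwarz and trace estimates. Your write-up merely fills in details (the duality pairings on $\Gamma$, the explicit treatment of $\gamma=0$) that the paper states tersely or defers to its remarks.
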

\begin{proof}
	For any $\bu \in \bm W$,  $\nabla\times \bu \in L^2(\Omega)$ and $\nabla\times\nabla\times \bu =((\nabla\times\bu)_y,-(\nabla\times \bu)_x)^{\mathrm T}\in \bm L^2(\Omega)$, from which we can infer that $\nabla\times\bu \in H^1_0(\Omega)$ together with the boundary condition. By applying the Friedrichs inequality, we deduce that $\|\bm u\|_{L^2(\Omega)}\leq\|\nabla\times \bu\|_{L^2(\Omega)}\leq\|\nabla\times\nabla\times \bu\|_{L^2(\Omega)}$, which further leads to the ellipticity of the bilinear form $a(\cdot,\cdot)$ defined on $\bm W$. Additionally, by utilizing the trace inequalities, we easily establish the continuity of both the bilinear form $a(\cdot,\cdot)$ and the linear operator $\mathcal{L}$. Thus, the Lax-Milgram lemma guarantees the existence of a unique solution $\bu$ in $\bm W$.
\end{proof}
\begin{remark}
	In the proof, the crucial aspect lies in estimating the coercivity of the bilinear form $a(\cdot,\cdot)$. Notably, the jump conditions $[\![\bn\times\bu]\!] = [\![\nabla\times\bu]\!]=0$ imply that $\bu\in H(\text{curl}^2;\Omega)$. When combined with the boundary conditions, this result can be readily derived from the Friedrichs inequality. Furthermore, the jump condition $[\![\bn\cdot\bu]\!]=0$ and $\bm u$ belonging to $H(\text{div}^0;\Omega_-,\Omega_{+})$ imply that the solution $\bu$ resides in $H(\text{div}^0;\Omega)$.
\end{remark}

\begin{remark}
	For a sufficiently small value of $\gamma$(tending to zero), the jump condition $[\![\bu\cdot\bn]\!]=0$ on $\Gamma$ plays a crucial role, indicating that $\bu$ belongs to $H(\text{div}^0;\Omega)$. By combining this jump condition with $[\![\bn\times\bu]\!]=[\![\nabla\times\bu]\!]=0$, and utilizing the Friedrichs inequality (See Corollary 3.51 of \cite{Monk2003finite}), we can establish $\|\bu\|_{L^2(\Omega)}\leq\|\nabla\times\bu\|_{L^2(\Omega)}\leq\|\nabla\times\nabla\times\bu\|_{L^2(\Omega)}$. Consequently, the ellipticity of the bilinear form $a(\cdot,\cdot)$ becomes evident.
\end{remark}

\begin{remark}
	The jump conditions \eqref{jumpcon1} and \eqref{jumpcon5} yield $\nabla\times\bu\in L^2(\Omega)$ and $\nabla\cdot\bu\in L^2(\Omega)$. By combining these with the boundary condition \eqref{boundaryCon}, we deduce that $\bu\in \bm H^1(\Omega)$ (see Corollary 2.15 of \cite{amrouche1998vector}). Similarly, the jump condition \eqref{jumpcon2} implies that $\nabla\times\bu \in H^1(\Omega)$. 
\end{remark}

\section{Discrete weak formulation}\label{Sec:discrete}
Let $\mathcal{T}_h$ denote a quasi-uniform, shape-regular triangulation of domain $\Omega$, which is generated independently of the existence of the interface $\Gamma$ (see Fig. \ref{Diagram2}). For $K\in \mathcal{T}_h$, we denote by $h_K$ its diameter, and $h$ the maximum diameter among all elements, i.e., $h=\max_{K\in\mathcal{T}_h}h_K$. The set of all elements intersected by $\Gamma$ is denoted by $\mathcal{T}_h^\Gamma$ and given by
\begin{equation*}
\mathcal{T}_h^\Gamma=\left\{K\in\mathcal{T}_h:\ K\cap\Gamma\neq\emptyset\right\} .
\end{equation*}
For each interface element $K\in\mathcal{T}_h^\Gamma$, $\Gamma_K$ represents the part of $\Gamma$ contained within $K$. We further define $\Omega_{h,j}=\bigcup_{K\in\mathcal{T}_{h,j}}K$, where
\begin{equation*}
\mathcal{T}_{h,j}=\left\{K\in \mathcal T_h:K\cap \Omega_j\neq \emptyset\right\},\quad j=\pm.
\end{equation*}
Two sets of element edges are introduced as follows
\begin{align*}
\mathcal{E}_h&=\{E: E \text{ is an edge of } K \text{ for all } K\in \mathcal T_h \text{ and } E\not\subset \partial\Omega\},\\
\mathcal{E}_h^\Gamma &=\{E: E \text{ is edge of } K \text{ for all } K\in \mathcal{T}_h^\Gamma \text{ and } E\not\subset\partial\Omega\}.
\end{align*}

In addition to the definitions provided earlier, we make specific assumptions regarding the triangulation $\mathcal{T}_h$ of domain $\Omega$ (cf. \cite{Hansbo2001, Massing}). 

\begin{assumption}
We assume that the interface $\Gamma$ intersects each element boundary $\partial K$ precisely twice, and each (open) edge is intersected by $\Gamma$ at most once.
\end{assumption}
\begin{assumption}\label{Assump3}
We assume that a finite number of edges is adequate to traverse continuously from an interface element to a non-interface element
\end{assumption}

%
\begin{figure}[h]
	\centering
	\subfigure{
		\begin{minipage}[t]{0.41\linewidth}
			\centering
			\includegraphics[width=1.8in]{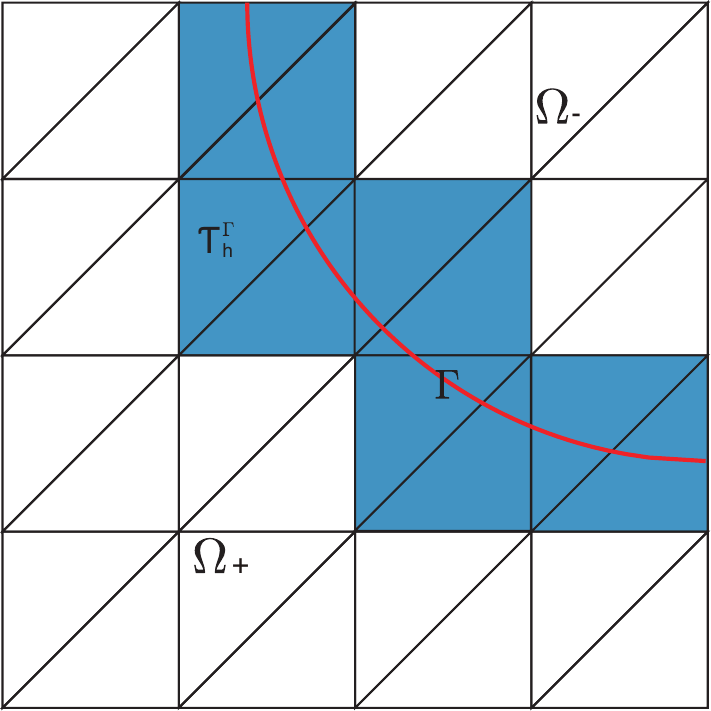}
		\end{minipage}
	}
	\subfigure{
		\begin{minipage}[t]{0.41\linewidth}
			\centering
			\includegraphics[width=1.8in]{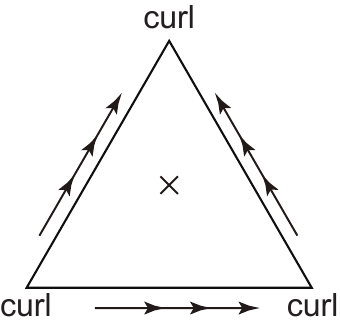}
		\end{minipage}
	}
	\caption{The diagram of local mesh (left) and diagram of degrees of freedom (right).}
	\label{Diagram2}
\end{figure}

We employ the lowest order curl-curl conforming finite element space including $P_2$ polynomials \cite{HuZhangZhang2020}, characterized by the following shape function space for each element $K\in\mathcal{T}_h$
\begin{equation}
\begin{aligned}
\mathcal{S}_h(K) = [P_2(K)]^2 \oplus \text{span}\{\mathfrak{p} B\}.
\end{aligned}
\end{equation}
Here $[P_k(K)]^2$ denotes the space of vector-valued polynomials with a maximum degree of $k$, and $B=\lambda_1\lambda_2\lambda_3$ with barycentric coordinates $\lambda_i$. Additionally, $\mathfrak{p}$ stands for the Poincar\'e operator\cite{Arnold2018}, which maps a scalar function to a vector field 
\begin{equation*}
\mathfrak{p}u:=\int_{0}^{1}t\boldsymbol{x}^\bot u(t\boldsymbol{x})\d t \quad \text{with} \quad \boldsymbol{x}:=(x_1,x_2)^{\mathrm T} \quad \text{and} \quad \boldsymbol{x}^{\bot}:=(-x_2,x_1)^{\mathrm T}.
\end{equation*}
It is worth noting that the operator $\mathfrak{p}$ satisfies $\nabla\times\mathfrak{p}u=u.$

For any $\bm u\in \mathcal{S}_h(K)$, the degrees of freedom can be described as follows (see Fig. \ref{Diagram2})
\begin{itemize}
	\item[-] Vertex values of $\bm u$ at all vertices $v_i$ of element $K$
	\begin{equation}\label{DOF1}
	\begin{aligned}
	\boldsymbol{M}_v(\bm u)=\{(\nabla\times\bm u)(v_i),\ i=1,2,3\}.
	\end{aligned}  
	\end{equation}
	
	\item[-] Edge moments of $\bm u$ on all edges $e_i$ of $K$, each with the unit tangential vector $\boldsymbol{\tau}_i$
	\begin{align}
	\boldsymbol{M}_e(\bm u)=&\left\{\int_{e_i}\bm u\cdot\boldsymbol{\tau_i} q ds\ \forall q\in P_2(e_i),\ i=1,2,3\right\}.
	\end{align}  
	
	\item[-] Interior degrees of freedom of $\bm u$ in $K$
	\begin{align}\label{DOF3}
	\boldsymbol{M}_K(\bm u)=\left\{\int_{K}\bm u\cdot\boldsymbol{q} dx\ \forall \boldsymbol{q}\in \{\boldsymbol{x}\}\right\}.
	\end{align}
\end{itemize}

By assembling the shape functions on neighboring elements using the aforementioned degrees of freedom, we construct a global finite element space, denoted by $\mathcal{S}_h(\mathcal{T}_h)$.

To establish the finite element space for the quad-curl interface problem \eqref{PDE}-\eqref{jumpcon5}, we first introduce the curl-curl conforming finite element space $\mathcal S_h(\mathcal{T}_{h,j})$ on $\mathcal{T}_{h,j}$ for $j=\pm$. Then, we define the unfitted finite element space as follows
\begin{equation*}
\boldsymbol{V}_h=\big\{\bm v_h=\{\bm v_h^-,\bm v_h^+\}:\ \boldsymbol{v}_h^j\in\mathcal S_h(\mathcal T_{h,j}) \text{ for } j=\pm,\text{ and } \bn\times\bm v_h^+=\nabla\times\bm v_h^+=0\text{ on } \partial\Omega \big\}.
\end{equation*}

The unfitted finite element scheme based on Nitsche's method  for the quad-curl problem (\ref{Weak}) is formulated to find $\bu_h\in \boldsymbol{V}_h$ such that 
\begin{equation}\label{DisWeak}
\mathcal{A}_h(\bu_h,\bv_h):=a_h(\bu_h,\bv_h)+J_1(\bm u_h,\bm v_h)+J_2(\bm u_h,\bm v_h)=\mathcal{L}_h(\bv_h)\quad \forall \bv_h\in \boldsymbol{V}_h.
\end{equation}
In the above equation, the discrete bilinear form $a_h$ and $J$ are defined as follows
\begin{equation*}
\begin{aligned}
a_h(\bu_h,\bv_h)&:=\int_{\Omega_-\cup\Omega_+}\left(\alpha\bm{\nabla}\times\nabla\times \boldsymbol{u}_h\right)\cdot\left(\bm{\nabla}\times\nabla\times \boldsymbol{v}_h\right)\d A+\int_{\Omega_-\cup\Omega_+}\gamma\boldsymbol{u}_h\cdot\boldsymbol{v}_h \d A\\
&\quad+\int_{\Gamma}\{\!\!\{\boldsymbol{n}\times(\alpha\bm{\nabla}\times\nabla\times\boldsymbol{u}_h)\}\!\!\}[\![\nabla\times\boldsymbol{v}_h]\!]+\{\!\!\{\boldsymbol{n}\times(\alpha\bm{\nabla}\times\nabla\times\boldsymbol{v}_h)\}\!\!\}[\![\nabla\times\boldsymbol{u}_h]\!]\d s\\
&\quad-\int_{\Gamma}\{\!\!\{\nabla\times(\alpha\bm{\nabla}\times\nabla\times\boldsymbol{u}_h)\}\!\!\}[\![\bn\times\bv_h]\!]+\{\!\!\{\nabla\times(\alpha\bm{\nabla}\times\nabla\times\boldsymbol{v}_h)\}\!\!\}[\![\bn\times\bu_h]\!]\d s\\
&\quad+h^{-2}\int_{\Omega_-\cup\Omega_+}(\nabla_h \cdot\bu_h) (\nabla_h \cdot \bv_h) \d A +\sum_{i=0}^{2}\mathcal{G}_i(\bu_h,\bv_h).
\end{aligned}
\end{equation*}
\begin{equation*}
\begin{aligned}
\mathcal{G}_0(\bu_h,\bv_h)&:=\sum_{K\in \mathcal T_h^{\Gamma}}h^{-3}\int_{\Gamma_K}[\![\bn\cdot\bu_h]\!][\![\bn\cdot\bv_h]\!]\d s+\sum_{E\in\mathcal{E}_h}h^{-3}\int_{E}[\![\bn\cdot\bu_h]\!][\![\bn\cdot\bv_h]\!]\d s,\\
\mathcal{G}_1(\bu_h,\bv_h)&:=\lambda\sum_{K\in\mathcal T_h^{\Gamma}}h^{-3}\int_{\Gamma_K}[\![\bn\times \bu_h]\!][\![\bn\times\bv_h]\!]\d s,\\
\mathcal{G}_2(\bu_h,\bv_h)&:=\lambda\sum_{K\in\mathcal T_h^{\Gamma}}h^{-1}\int_{\Gamma_K}[\![\nabla\times\bu_h]\!][\![\nabla\times\bv_h]\!]\d s,\\
J_1(\bm u_h,\bm v_h)&:=\sum_{E\in\mathcal{E}_h^{\Gamma}}\sum_{0\leq l\leq 4}h^{2l-1}\int_{E}[\![\partial_n^l\bm u_h]\!]\cdot[\![\partial_n^l\bm v_h]\!]\d s,\\
J_2(\bm u_h,\bm v_h)&:=\sum_{E\in\mathcal{E}_h^{\Gamma}}\sum_{0\leq l\leq 4}h^{2l-1}\int_{E}[\![\partial_n^l(\nabla\times\bm u_h)]\!]\cdot[\![\partial_n^l(\nabla\times\bm v_h)]\!]\d s,
\end{aligned}
\end{equation*}
where $\nabla_h\cdot$ denotes taking divergence piecewise, and $\partial_n^lq=\sum_{|m|=l}D^mq(\bm x)\bn^m$ for multi-index $m=(m_1,m_2)$ with $|m|=m_1+m_2$, and $\bn^m=n_1^{m_1}n_2^{m_2}$. The discrete linear form $\mathcal{L}_h$ is defined as follows
\begin{align*}
\mathcal{L}_h(\boldsymbol{v}_h)&:=\int_{\Omega_-\cup\Omega_+}\boldsymbol{f}\cdot\boldsymbol{v}_h\d A-\int_{\Gamma}\varphi_3\{\!\!\{\nabla\times \boldsymbol{v}_h\}\!\!\}^*\d s+\int_{\Gamma}\varphi_4\{\!\!\{ \boldsymbol{n}\times\boldsymbol{v}_h\}\!\!\}^*\d s.
\end{align*}
where $\{\!\!\{\xi\}\!\!\}^*=\kappa_2\xi^-+\kappa_1\xi^+$.

\begin{remark}
The interface terms $\mathcal G_1$ and $\mathcal G_2$ are introduced to enforce the interface conditions and the stability of the numerical scheme. The terms $\{\!\!\{\boldsymbol{n}\times(\alpha\bm{\nabla}\times\nabla\times\boldsymbol{v}_h)\}\!\!\}[\![\nabla\times\boldsymbol{u}_h]\!]$ and $\{\!\!\{\nabla\times(\alpha\bm{\nabla}\times\nabla\times\boldsymbol{v}_h)\}\!\!\}[\![\bn\times\bu_h]\!]$ contribute to the symmetry of the numerical scheme.
\end{remark}

\begin{remark}
When the parameter $\gamma >0$, the term $\int_{\Omega_-\cup\Omega_+}\gamma\boldsymbol{u}_h\cdot\boldsymbol{v}_h \d A$ theoretically ensures the divergence-free condition. However, all the interface terms have significantly larger magnitudes compared to $\int_{\Omega_-\cup\Omega_+}\gamma\boldsymbol{u}_h\cdot\boldsymbol{v}_h \d A$, which may compromise the numerical scheme's stability. To address this, the terms $h^{-2}\int_{\Omega_-\cup\Omega_+}(\nabla_h\cdot \bu_h) (\nabla_h\cdot \bv_h) \d A$ and $\mathcal G_0$ are introduced to enforce the divergence-free condition (see Section \ref{Sec:error}) and enhance the scheme's stability. These additional terms ensure that the scheme remains stable even when $\gamma$ tends to zero, as discussed in Section \ref{Sec:wellposedness}. Moreover, these terms also help reduce the condition number of the scheme (see Section \ref{Sec:conditionnumber}).
\end{remark}

\begin{remark}
The Ghost penalty $J$ is incorporated to address elements that the intersection with $\Omega$ has a significantly smaller measure $|K\cap\Omega|$ compared to the element size $|K|$ in the vicinity of the interface $\Gamma$ (see, e.g., \cite{Massing, BadiaNe2022}). This term improves the stability and accuracy of the numerical scheme.
\end{remark}

\section{The existence and uniqueness of discrete scheme}\label{Sec:wellposedness}
In this section, we will establish the existence, uniqueness, and stability of the discrete scheme \eqref{DisWeak} using the Lax-Milgram lemma. To do so, we need to prove the coercivity and continuity of the bilinear form $\mathcal{A}_h$ and the continuity of the linear functional $\mathcal{L}_h$ with respect to the discrete norm $\3bar \cdot\3bar_{\ast}$ defined as
\begin{equation}\label{equ_discretenorm}
\begin{aligned}
\3bar \bv\3bar^2_{\ast} :=&\|\bm{\nabla}\times\nabla\times \bv\|^2_{\Omega_{h,-}\cup\Omega_{h,+}}+\|\bv\|^2_{\Omega_{h,-}\cup\Omega_{h,+}}+h^{-2}\|\nabla_h \cdot\bv\|^2_{\Omega_-\cup\Omega_+}	\\&+\sum_{i=0}^{2}\mathcal{G}_{i}(\bv,\bv)+\sum_{i=1}^2J_i(\bm v,\bm v).
\end{aligned}
\end{equation}

We begin by presenting a crucial trace inequality that involves the interface $\Gamma$ (see \cite{Hansbo2001}). This inequality plays a critical role in the analysis of well-posedness and the subsequent investigation of errors.
\begin{lemma}
[{\cite[Lemma 3]{Hansbo2001}}]\label{traceineq} Under the assumptions of the mesh, the following trace inequalities hold
\begin{equation}\label{TraceResult} 
\begin{aligned}
\|w\|_{\Gamma_K}&\leq Ch^{-1/2}\|w\|_{K},\quad \forall w\in P_k(K) \text{ and }K\in\mathcal T_h^{\Gamma},\\
\|w\|^2_{\Gamma_K}&\leq C\left(h^{-1}\|w\|^2_{K}+h\|w\|^2_{1,K}\right),\quad \forall w\in H^1(K) \text{ and }K\in\mathcal T_h^{\Gamma},
\end{aligned}
\end{equation}
where the constant $C>0$ is independent of the position of $\Gamma$ with respect to the mesh.
\end{lemma}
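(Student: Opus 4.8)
The plan is to prove the two inequalities separately, with the overriding concern in both being that the constant $C$ depend only on the shape-regularity of $\mathcal{T}_h$ and on $\Gamma$ through its (bounded) curvature, never on how $\Gamma$ happens to sit inside a given element $K$. Two elementary geometric facts, both consequences of the $C^2$-smoothness of $\Gamma$, of the assumption that $\Gamma$ meets $\partial K$ exactly twice (so that $\Gamma_K$ is a single connected arc with its two endpoints on $\partial K$), and of taking $h$ small relative to the minimal radius of curvature of $\Gamma$, will be used repeatedly: first, that the arclength obeys $|\Gamma_K|\le Ch$; and second, that the unit normal along $\Gamma_K$ varies so little that all its values lie in a narrow cone, so there is a \emph{constant} unit vector $\bm{b}=\bm{b}(K)$ with $\bm{b}\cdot\bn\ge c_0>0$ everywhere on $\Gamma_K$ (here $\bn$ denotes the outward unit normal of $D:=K\cap\Omega_j$ along $\Gamma_K$), with $c_0$ uniform in $K$ and in the cut.

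For the polynomial inverse estimate (first line) I would argue by scaling together with the finite-dimensionality of $P_k$. On the reference element $\hat K$ the norms $\|\cdot\|_{L^\infty(\hat K)}$ and $\|\cdot\|_{L^2(\hat K)}$ are equivalent on the finite-dimensional space $P_k(\hat K)$, with a constant depending only on $k$; pulling this back through the affine map $F:\hat K\to K$ (for which $|\det F'|\sim h^2$) gives $\|w\|_{L^\infty(K)}\le Ch^{-1}\|w\|_{K}$ for every $w\in P_k(K)$. Combining this with $|\Gamma_K|\le Ch$ yields
\begin{equation*}
\|w\|^2_{\Gamma_K}=\int_{\Gamma_K}w^2\,\d s\le |\Gamma_K|\,\|w\|^2_{L^\infty(K)}\le Ch\cdot Ch^{-2}\|w\|^2_{K}=Ch^{-1}\|w\|^2_{K},
\end{equation*}
and taking square roots gives the first inequality. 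Crucially, neither the norm-equivalence constant nor the length bound sees the position of $\Gamma$ inside $K$.

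For the $H^1$ trace inequality (second line), which must hold for all $w\in H^1(K)$ and where the $L^\infty$ argument is unavailable, I would use the divergence theorem on $D=K\cap\Omega_j$. With the constant field $\bm{b}=\bm{b}(K)$ from above one has $\nabla\cdot\bm{b}=0$, so
\begin{equation*}
\int_{\Gamma_K}w^2(\bm{b}\cdot\bn)\,\d s=\int_D 2w\,\nabla w\cdot\bm{b}\,\d A-\int_{\partial D\setminus\Gamma_K}w^2(\bm{b}\cdot\bn)\,\d s.
\end{equation*}
Using $\bm{b}\cdot\bn\ge c_0$ on $\Gamma_K$ on the left, bounding the volume term by Young's inequality as $2\|w\|_{K}\|\nabla w\|_{K}\le h^{-1}\|w\|^2_{K}+h\|\nabla w\|^2_{K}$, and controlling the remaining boundary piece by the \emph{standard}, interface-independent trace inequality on the fixed edges of $\partial K$, namely $\|w\|^2_{\partial K}\le C(h^{-1}\|w\|^2_{K}+h\|w\|^2_{1,K})$, and finally dividing by $c_0$, gives exactly the asserted estimate.

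The main obstacle, and the only genuinely nontrivial point, is establishing the two geometric facts uniformly in the cut, in particular the existence of the constant vector $\bm{b}$ with $\bm{b}\cdot\bn\ge c_0>0$ on all of $\Gamma_K$: this is precisely what forces the hypotheses that $\Gamma$ be $C^2$ and $h$ be sufficiently small, and it is the mechanism by which the constants are freed from any dependence on the position of the interface relative to the mesh. Everything downstream is then the routine scaling, Young, and trace machinery.
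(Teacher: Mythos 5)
Your proof is correct. Be aware, though, that the paper itself offers no proof of this lemma: it is quoted directly from Hansbo and Hansbo \cite[Lemma 3]{Hansbo2001}, so the relevant comparison is with that cited argument. Your route is essentially a clean repackaging of the same mechanism. The cited proof exploits the $C^2$-smoothness of $\Gamma$ to treat $\Gamma_K$ locally as a graph over a transversal direction and runs a one-dimensional fundamental-theorem-of-calculus/trace argument in that direction; your divergence-theorem identity $\int_{\Gamma_K} w^2(\bm{b}\cdot\boldsymbol{n})\,\mathrm{d}s=\int_D 2w\,\nabla w\cdot\bm{b}\,\mathrm{d}A-\int_{\partial D\setminus\Gamma_K}w^2(\bm{b}\cdot\boldsymbol{n})\,\mathrm{d}s$ with a fixed unit vector $\bm{b}$ satisfying $\bm{b}\cdot\boldsymbol{n}\geq c_0$ is the integrated form of exactly that idea, and it correctly isolates the two geometric facts everything rests on: $|\Gamma_K|\leq Ch$ and uniform transversality, both consequences of bounded curvature and $h$ small (implicit in the paper's mesh assumptions). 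Two minor remarks, neither a gap: (i) applying the divergence theorem on the cut region $D=K\cap\Omega_j$ for $w\in H^1(K)$ deserves a word about density (prove the identity for smooth $w$ and pass to the limit; this is legitimate because for the fixed geometry the trace map is continuous, and only the final inequality needs a uniform constant); (ii) your separate $L^\infty$-based proof of the polynomial inequality is fine but not needed, since the first line follows from the second by the inverse inequality $\|\nabla w\|_K\leq Ch^{-1}\|w\|_K$ for $w\in P_k(K)$.
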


In addition to the trace inequalities in Lemma \ref{traceineq}, we will also utilize the following trace inequalities and inverse inequality\cite{Ci2002, BrennerScott2008}
\begin{align} 
&\|w\|_{E}\leq Ch^{-1/2}\|w\|_{K},\quad \forall w\in P_k(K), \ E\subset \partial K, \text{ and }K\in\mathcal T_h,\label{traceinq1}\\
&\|w\|^2_{E}\leq C\left(h^{-1}\|w\|^2_{K}+h\|w\|^2_{1,K}\right),\quad \forall w\in H^1(K), \ E\subset \partial K, \text{ and }K\in\mathcal T_h,\label{traceinq2}\\
&\|\nabla w\|_K\leq Ch^{-1}\|w\|_K,\quad \forall w\in P_k(K) \text{ and }K\in\mathcal T_h.\label{inverseinq}
\end{align}

Using the aforementioned inequalities, we are able to demonstrate the extended stability with ghost penalty terms
\begin{lemma}\label{ghostestimate}
Let $\Omega_-$, $\Omega_+$, $\Omega_{h,-}$, $\Omega_{h,+}$, and $\mathcal{E}_h^{\Gamma}$ be defined as in Section \ref{Sec:discrete}. Then, for $\bm v_h=\{\bm v_h^-,\bm v_h^+\}$, where $\bm v_h^i$ is a vector-valued piecewise polynomial on $\Omega_{h,i}$ for $i=\pm$, the following estimates hold
\begin{align}
&\|\bv_h\|^2_{\Omega_{h,-}\cup\Omega_{h,+}}\leq C\Big(\|\bv_h\|^2_{\Omega_-\cup\Omega_+}+h^2J_1(\bm v_{h}, \bm v_{h})\Big),\label{ghostu}\\
&\|\nabla\times\bv_h\|^2_{\Omega_{h,-}\cup\Omega_{h,+}}\leq C\Big(\|\nabla\times\bv_h\|^2_{\Omega_-\cup\Omega_+}+h^2J_2(\bm v_h,\bm v_h)\Big),\label{ghostcurlu}\\
&\|\bm{\nabla}\times\nabla\times\bv_h\|^2_{\Omega_{h,-}\cup\Omega_{h,+}}\leq C\Big(\|\bm{\nabla}\times\nabla\times\bv_h\|^2_{\Omega_-\cup\Omega_+}+J_2(\bm v_h,\bm v_h)\Big)\label{ghostcurlcurlu}.
\end{align}
\end{lemma}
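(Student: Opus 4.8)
The plan is to derive all three bounds from a single local ghost-penalty inequality on one shared edge, which is then propagated to the full fictitious domains by the chain argument permitted by Assumption~\ref{Assump3}. I first fix the interpretation of the norms: since $\bv_h=\{\bv_h^-,\bv_h^+\}$ is a doubled field, $\|\bv_h\|^2_{\Omega_{h,-}\cup\Omega_{h,+}}=\|\bv_h^-\|^2_{\Omega_{h,-}}+\|\bv_h^+\|^2_{\Omega_{h,+}}$ and $\|\bv_h\|^2_{\Omega_-\cup\Omega_+}=\|\bv_h^-\|^2_{\Omega_-}+\|\bv_h^+\|^2_{\Omega_+}$, so it suffices to treat each component on its own active mesh $\mathcal{T}_{h,j}$. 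The only region where the two disagree is $\Omega_{h,j}\setminus\Omega_j$, which is contained in the interface elements; there the physical cut $K\cap\Omega_j$ can be arbitrarily small, so one cannot simply invoke a norm equivalence between $\|\cdot\|_K$ and $\|\cdot\|_{K\cap\Omega_j}$ with a cut-independent constant. For \eqref{ghostcurlu} and \eqref{ghostcurlcurlu} I set $w:=\nabla\times\bv_h$, a scalar piecewise polynomial of degree $\le 3$, and use $\bm{\nabla}\times\nabla\times\bv_h=\bm{\nabla}\times w$, so that $\|\bm{\nabla}\times\nabla\times\bv_h\|^2=\|\nabla w\|^2$; thus \eqref{ghostcurlu} becomes an $L^2$ ghost penalty for $w$ and \eqref{ghostcurlcurlu} a gradient ghost penalty for $w$.

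The key local estimate is: if $K_1,K_2\in\mathcal{T}_h$ share an edge $E\in\mathcal{E}_h^\Gamma$ and $q$ is a (scalar or vector) piecewise polynomial of degree $\le 4$, then
$$\|q\|^2_{K_1}\le C\Big(\|q\|^2_{K_2}+\sum_{l=0}^{4}h^{2l+1}\big\|[\![\partial_n^l q]\!]\big\|^2_E\Big),$$
with $C$ depending only on shape regularity. I would prove this by letting $q_2^{\mathrm e}$ be the canonical polynomial extension of $q|_{K_2}$ to $K_1$; a scaling argument (all norms on the finite-dimensional polynomial space restricted to the shape-regular patch $K_1\cup K_2$ are equivalent) gives $\|q_2^{\mathrm e}\|_{K_1}\le C\|q\|_{K_2}$. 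The difference $d:=q|_{K_1}-q_2^{\mathrm e}$ is a single polynomial on $K_1$ whose normal derivatives on $E$ are exactly the jumps, $\partial_n^l d|_E=[\![\partial_n^l q]\!]$. Since in two dimensions the normal-derivative traces $\{\partial_n^l(\cdot)|_E\}_{l=0}^{k}$ have the same cardinality $(k+1)(k+2)/2$ as the coefficients of a degree-$k$ polynomial and uniquely determine it (Taylor expansion off $E$), a scaled norm equivalence yields $\|d\|^2_{K_1}\le C\sum_{l=0}^{4}h^{2l+1}\|\partial_n^l d\|^2_E$, and the claim follows by the triangle inequality.

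For the gradient version needed in \eqref{ghostcurlcurlu}, I apply the inverse inequality \eqref{inverseinq} to the polynomial $d$, giving $\|\nabla d\|^2_{K_1}\le Ch^{-2}\|d\|^2_{K_1}\le C\sum_{l=0}^{4}h^{2l-1}\|[\![\partial_n^l w]\!]\|^2_E$, while the extension satisfies $\|\nabla q_2^{\mathrm e}\|_{K_1}\le C\|\nabla q\|_{K_2}$; the resulting edge contribution carries the scaling $h^{2l-1}$, matching $J_2$ exactly. This explains why no extra $h^2$ appears in \eqref{ghostcurlcurlu}, in contrast to the $h^2J_1$ and $h^2J_2$ of the $L^2$-type bounds \eqref{ghostu} and \eqref{ghostcurlu}.

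It remains to globalize. For each interface element $K\in\mathcal{T}_{h,j}^\Gamma$, Assumption~\ref{Assump3} provides a chain of elements of uniformly bounded length joining $K$ to a non-interface element $K_\ast\subset\Omega_j$, every consecutive pair sharing an edge of $\mathcal{E}_h^\Gamma$; iterating the local estimate transports $\|q\|^2_K$ to $\|q\|^2_{K_\ast}$ plus a sum of edge-jump terms along the chain, and since $K_\ast$ is uncut we have $\|q\|^2_{K_\ast}=\|q\|^2_{K_\ast\cap\Omega_j}\le\|q\|^2_{\Omega_j}$. Summing over $K\in\mathcal{T}_{h,j}$ and $j=\pm$, and choosing $q=\bv_h$, $q=w$, or handling $\nabla w$ as above, produces the three right-hand sides $h^2J_1$, $h^2J_2$, and $J_2$ respectively. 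The main obstacle, and where care is essential, is keeping $C$ independent of both the cut and the chain: the local estimate already uses only full elements and full edges, so its constant is cut-independent; the global step then requires a finite-overlap bound (bounded chain length from Assumption~\ref{Assump3} together with shape regularity) guaranteeing that each uncut element and each edge is re-used in only boundedly many chains, so that neither $\|q\|^2_{\Omega_j}$ nor the ghost-penalty sums are overcounted.
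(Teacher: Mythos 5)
Your proof is correct, and it is self-contained precisely where the paper leans on the literature: for the two $L^2$-type bounds \eqref{ghostu}--\eqref{ghostcurlu} the paper simply invokes \cite[Proposition 5.1]{Massing}, whose core is exactly the local two-element estimate you prove from scratch (polynomial extension across a shared edge, plus the fact that the edge jumps $[\![\partial_n^l\cdot]\!]$, $0\le l\le 4$, determine and control the difference polynomial by a scaled norm equivalence), followed by the same chaining via Assumption~\ref{Assump3} with finite overlap. The genuine difference is in how \eqref{ghostcurlcurlu} is obtained. The paper derives it from \eqref{ghostcurlu}: with $c_0$ the mean of $\nabla\times\bv_h$ over a nearby uncut element $K_2$, it uses the inverse inequality to get $\|\bm{\nabla}\times\nabla\times\bv_h\|^2_{K_1}\le Ch^{-2}\|\nabla\times\bv_h-c_0\|^2_{K_1}$, propagates the right-hand side from $K_1$ to $K_2$ by the $L^2$ ghost bound (the shift by the single constant $c_0$ leaves all jumps unchanged), and closes with the element-level Poincar\'e inequality $h^{-2}\|\nabla\times\bv_h-c_0\|^2_{K_2}\le C\|\bm{\nabla}\times\nabla\times\bv_h\|^2_{K_2}$. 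You instead build a gradient version of the local estimate directly: splitting $w=\nabla\times\bv_h$ on $K_1$ into extension plus difference, bounding the extension's gradient by $\|\nabla w\|_{K_2}$ through polynomial norm equivalence on the patch, and applying the inverse inequality only to the difference polynomial $d$, so that the $h^{-2}$ loss is confined to the jump terms and converts the weights $h^{2l+1}$ into exactly the $h^{2l-1}$ of $J_2$. Both routes are valid and rest on the same chaining mechanism; the paper's is shorter because it recycles \eqref{ghostcurlu} and a Bramble--Hilbert argument, while yours makes transparent why \eqref{ghostcurlcurlu} carries no factor $h^2$ in front of $J_2$, a point the paper's presentation leaves implicit.
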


\begin{proof}
The proof for \eqref{ghostu} and \eqref{ghostcurlu} closely mirrors the approach in \cite[Proposition 5.1]{Massing}, and is therefore omitted here. To derive \eqref{ghostcurlcurlu}, for any interface element $K_1\in \mathcal{T}_h^{\Gamma}$, there exist a non-interface element $K_2$ such that
\begin{align*}
\|\bm{\nabla}\times\nabla\times \bv_h\|^2_{K_1}&=|\bm{\nabla}\times(\nabla\times \bv_h-c_0)\|^2_{K_1}\leq Ch^{-2}\|\nabla\times \bv_h -c_0\|^2_{K_1}\\
&\leq C\Big(h^{-2}\|\nabla\times \bv_h -c_0\|^2_{K_2}+J_2(\bm v_h,\bm v_h)\Big)\\
&\leq C\Big(\|\bm{\nabla}\times\nabla\times\bv_h\|^2_{K_2}+J_2(\bm v_h,\bm v_h)\Big),
\end{align*}
where $c_0=|K_2|^{-1}\int_{K_2}\nabla\times\bv_h \d A$ is a constant and we have used \eqref{ghostcurlu} in the second inequality. We then proceed as in the proof of \eqref{ghostu} and \eqref{ghostcurlu} to establish \eqref{ghostcurlcurlu}.
\end{proof}

Furthermore, we establish the following Poincar\' e-type inequalities.
\begin{lemma}\label{poincare}
For $\bm v_h\in \bm V_h$, the following Poincar\' e-type inequalities hold
\begin{align}
&\|\nabla\times\bm v_h\|_{\Omega_{-}\cup\Omega_{+}}^2\leq C\Big(\|\bm{\nabla}\times\nabla\times\bm v_h\|_{\Omega_{-}\cup\Omega_{+}}^2+\mathcal G_2(\bm u_h,\bm u_h)\Big),\label{curlv-poincare}\\
&\|\bm v_h\|^2\leq C\Big(\|\nabla\times\bm v_h\|_{\Omega_{-}\cup\Omega_{+}}^2+ \|\nabla_h\cdot\bm v_h\|_{\Omega_{-}\cup\Omega_{+}}^2+\mathcal G_0(\bm u_h,\bm u_h)+\mathcal G_1(\bm u_h,\bm u_h)\Big).\label{v-poincare}
\end{align}
\end{lemma}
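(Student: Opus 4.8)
The plan is to prove both inequalities by reducing them to their classical continuous counterparts on $\Omega$, exploiting the fact that the only obstruction to global conformity of $\bm v_h$ is the interface jump across $\Gamma$ together with the normal jumps across interior edges, all of which are measured by the penalty terms. Throughout, the guiding principle is to keep every constant independent of how $\Gamma$ cuts the mesh, which is exactly what the cut-uniform trace inequalities of Lemma \ref{traceineq} supply.

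For \eqref{curlv-poincare}, I would set $z:=\nabla\times\bm v_h$, a \emph{scalar} field. Because $\bm v_h^j$ is curlcurl-conforming on $\mathcal T_{h,j}$, the quantity $z$ is continuous (hence in $H^1$) inside each $\Omega_j$, $j=\pm$, and $z^+=0$ on $\partial\Omega$ by the boundary condition built into $\bm V_h$; its only discontinuity is the jump $[\![z]\!]$ across $\Gamma$. I would then construct a lifting $E$ supported in a tubular $O(h)$-neighbourhood of $\Gamma$ inside $\Omega_+$, with trace $[\![z]\!]$ on $\Gamma$ and the scalings $\|E\|_{\Omega_+}^2\le Ch\|[\![z]\!]\|_\Gamma^2$ and $\|\nabla E\|_{\Omega_+}^2\le Ch^{-1}\|[\![z]\!]\|_\Gamma^2$. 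The glued field $\tilde z:=z^-$ on $\Omega_-$ and $\tilde z:=z^++E$ on $\Omega_+$ then lies in $H^1_0(\Omega)$, so the scalar Poincar\'e inequality yields $\|\tilde z\|\le C\|\nabla\tilde z\|$. Writing $\|z\|\le\|\tilde z\|+\|E\|_{\Omega_+}$ and $\|\nabla\tilde z\|\le\|\nabla z\|_{\Omega_-\cup\Omega_+}+\|\nabla E\|_{\Omega_+}$, using that $|\nabla z|=|\bm{\nabla}\times\nabla\times\bm v_h|$ pointwise, and identifying $h^{-1}\|[\![z]\!]\|_\Gamma^2=\lambda^{-1}\mathcal G_2(\bm v_h,\bm v_h)$, collapses everything to \eqref{curlv-poincare}.

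For \eqref{v-poincare} I would run the same scheme at the vector level, but now there are two sources of nonconformity: inside each $\Omega_{h,j}$ the field $\bm v_h^j$ is only tangentially continuous, so its \emph{normal} component jumps across interior edges $E\in\mathcal E_h$, while across $\Gamma$ both the tangential and normal traces jump. First I would apply a conforming (Oswald-type) averaging operator on each $\mathcal T_{h,j}$ to repair the interior normal jumps, the defect being bounded in $L^2$ by $\sum_E h\,\|[\![\bn\cdot\bm v_h]\!]\|_E^2$ and, after an inverse inequality, in the $\nabla\times$ and $\nabla\cdot$ seminorms by $\sum_E h^{-1}\|[\![\bn\cdot\bm v_h]\!]\|_E^2$. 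Then a single interface lifting, matching the full vector trace jump (i.e.\ both $[\![\bn\times\bm v_h]\!]$ and $[\![\bn\cdot\bm v_h]\!]$ on $\Gamma$), glues the two pieces into $\tilde{\bm v}\in\bm H^1(\Omega)$ with $\bn\times\tilde{\bm v}=0$ on $\partial\Omega$. On the convex domain $\Omega$ this boundary condition forces the kernel of $(\nabla\times,\nabla\cdot)$ to be trivial, so the continuous Friedrichs inequality (cf.\ \cite{amrouche1998vector}) gives $\|\tilde{\bm v}\|\le C(\|\nabla\times\tilde{\bm v}\|+\|\nabla\cdot\tilde{\bm v}\|)$. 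Triangle inequalities reduce $\|\bm v_h\|$ to $\|\nabla\times\bm v_h\|_{\Omega_-\cup\Omega_+}$, $\|\nabla_h\cdot\bm v_h\|_{\Omega_-\cup\Omega_+}$, plus the defect seminorms; since every defect is an $h^{-1}$-weighted jump and $h^{-1}\le h^{-3}$ for $h\le1$, each is dominated by the matching $h^{-3}$-weighted term in $\mathcal G_0$ (normal jumps on $\mathcal E_h$ and $\Gamma_K$) or $\mathcal G_1$ (tangential jumps on $\Gamma_K$), which yields \eqref{v-poincare}. Where a sliver interface element forces a comparison between norms on $\Omega_{h,j}$ and on $\Omega_j$, the stability estimates of Lemma \ref{ghostestimate} are invoked.

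The main obstacle is constructing the interface liftings and the averaging operator near $\Gamma$ with constants that do \emph{not} degenerate as an element is cut into an arbitrarily thin sliver. This is precisely where the cut-uniform trace inequalities of Lemma \ref{traceineq} and the $C^2$-regularity of $\Gamma$ (ensuring the interface band is a genuine $O(h)$ tube) enter; pinning down the exact $h$-powers in the lifting bounds, and verifying that the averaging preserves both the tangential continuity already present and the boundary condition on $\partial\Omega$, is the delicate part, whereas the final assembly through the continuous Poincar\'e and Friedrichs inequalities is routine.
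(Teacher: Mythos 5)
Your overall architecture---repair the nonconformity of $\bm v_h$ by liftings and averaging, apply the continuous Poincar\'e/Friedrichs inequality to the repaired field, and charge the repair defects to the penalty terms---is a recognized strategy, but it is not the paper's, and as written it has a genuine gap at its central step: the lifting bounds. You claim an extension $E$ of $g=[\![\nabla\times\bm v_h]\!]$ supported in an $O(h)$ tube with $\|E\|^2_{\Omega_+}\leq Ch\|g\|^2_{\Gamma}$ and $\|\nabla E\|^2_{\Omega_+}\leq Ch^{-1}\|g\|^2_{\Gamma}$, and assert that Lemma \ref{traceineq} supplies the cut-uniform constants. It does not: Lemma \ref{traceineq} bounds traces by volume norms, whereas any lifting of a boundary datum must also control the oscillation of $g$ \emph{along} $\Gamma$. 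Concretely, a tubular construction $E=g(\pi(x))\,\chi(d(x)/h)$ produces the additional gradient contribution $h\|\partial_\tau g\|^2_{\Gamma}$, and absorbing it into $h^{-1}\|g\|^2_{\Gamma}$ requires the surface inverse inequality $\|\partial_\tau g\|_{\Gamma_K}\leq Ch^{-1}\|g\|_{\Gamma_K}$, which is exactly the kind of estimate that is \emph{not} uniform in the cut position: on an arc of length $\delta\ll h$ the ratio scales like $\delta^{-1}$, not $h^{-1}$. The only cut-uniform way back to volume quantities is the polynomial trace inequality in \eqref{TraceResult}, but that returns norms over \emph{entire} cut elements, i.e.\ the extension norms on $\Omega_{h,-}\cup\Omega_{h,+}$, and by Lemma \ref{ghostestimate} those cost you the ghost penalty $J_2(\bm v_h,\bm v_h)$---a term that does not appear on the right-hand side of \eqref{curlv-poincare}. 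The same obstruction hits the vector lifting in your argument for \eqref{v-poincare}, where in addition the existence of an Oswald-type operator that repairs only the normal jumps of this particular curlcurl element while preserving its tangential continuity and the boundary condition is asserted rather than proved. So your sketch proves at best a weakened lemma with $J_1,J_2$ added to the right-hand sides; that weaker form would in fact suffice for the paper's two applications (coercivity in Lemma \ref{CoreCon} and Lemma \ref{Dispoin}), but it is not the stated result.

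The paper avoids liftings altogether by a duality argument, and this is precisely why it does not run into your obstacle: for \eqref{curlv-poincare} it solves $-\Delta\psi=\theta_h$ as in \eqref{psidef} with $\|\psi\|_2\leq C\|\theta_h\|$, and for \eqref{v-poincare} the Hodge-type problem \eqref{wdef} with $\|\nabla\cdot\bm w\|_1+\|\nabla\times\bm w\|_1\leq C\|\bm v_h\|$; piecewise integration by parts then pairs the discrete jumps against traces of the \emph{dual} solution, and since $\nabla\psi$, $\nabla\times\bm w$, $\nabla\cdot\bm w$ lie in $H^1$, only the $H^1$-version (second estimate) of the cut trace inequality \eqref{TraceResult} is needed---no inverse estimate on cut arcs ever appears, and the jumps are absorbed directly into $\mathcal G_0$, $\mathcal G_1$, $\mathcal G_2$ by Cauchy--Schwarz with the penalty weights. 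To salvage your constructive route you would have to either add the ghost penalties to the right-hand sides of the lemma or switch to the duality argument.
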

\begin{proof}
Recalling the definition of $\bm V_h$, we can observe that $\nabla\times(\bm v_h|_{\Omega_{i}})$ 
belong to $L^2(\Omega_{i})$ for $i = \pm$. We can use $\theta_h$ to denote a function in the $L^2(\Omega)$ space, such that $\theta_h|_{\Omega_i} = \nabla\times(\bm v_h|_{\Omega_{i}})$.  Define $\psi$ by
\begin{equation}\label{psidef}
\begin{aligned}
-\Delta \psi &= \theta_h\quad &&\text{in }\Omega,\\
\psi&=0\quad &&\text{on }\partial\Omega.
\end{aligned}
\end{equation}
Then, according to the regularity of the Laplace problem, we have
\begin{align}\label{reg}
\|\psi\|^2_2\leq C\|\theta_h\|^2 = C\|\nabla\times\bm v_h\|^2_{\Omega_{-}\cup\Omega_{+}}.
\end{align}
Multiplying the first equation of \eqref{psidef} by $\nabla\times\dot{\bm v}_h$ and integrating over $\Omega$ by parts, we have
\begin{align*}
&\|\theta_h\|^2=(-\Delta \psi,\theta_h)\\
=&(\nabla\psi,\nabla\nabla\times\bm v_h)_{\Omega_{-}\cup\Omega_{+}}-\sum_{K\in\mathcal T_h^{\Gamma}}\int_{\Gamma_K}[\![\nabla\times\bm v_h]\!]\nabla\psi \cdot \bm n \d s\\
\leq &\Big(\|\bm{\nabla}\times\nabla\times\bm v_h\|_{\Omega_{-}\cup\Omega_{+}}^2+\mathcal G_2(\bm v_h,\bm v_h)\Big)^{1/2}\Big(\|\nabla\psi\|^2+\sum_{K\in\mathcal T_h^{\Gamma}}h{\lambda}^{-1}\|\bm n\cdot\nabla\psi\|^2_{\Gamma_K}\Big)^{1/2},
\end{align*}
where we have used the fact that $\nabla\times\bm v_h$ is continuous in $\Omega_j$ for $j=\pm$, and $\nabla\times\bm v_h=0$ on $\partial\Omega$. 
Furthermore, applying trace inequality \eqref{TraceResult} to $\sum_{K\in\mathcal T_h^{\Gamma}}h{\lambda}^{-1}\|\bm n\cdot\nabla\psi\|^2_{\Gamma_K}$ and using \eqref{reg}, we obtain

\[\|\nabla\psi\|^2+\sum_{K\in\mathcal T_h^{\Gamma}}h{\lambda}^{-1}\|\bm  n\cdot\nabla\psi\|^2_{\Gamma_K}\leq C\|\psi\|_2^2\leq C\|\nabla\times\bm v_h\|_{\Omega_{-}\cup\Omega_{+}}^2,\]
which completes the proof of \eqref{curlv-poincare}.

To prove \eqref{v-poincare}, define $\bm w$ by
\begin{equation}\label{wdef}
\begin{aligned}
\nabla\times\nabla\times \bm w -\nabla\nabla \cdot\bm w &= \bm v_h\quad &&\text{in }\Omega,\\
\bm w\times \bm n &=0\quad &&\text{on }\partial\Omega,\\
\nabla\cdot\bm w&=0\quad &&\text{on }\partial\Omega. 
\end{aligned}
\end{equation}
According to \cite[Theorem 4.8]{Arnold2018}, we have
\begin{align}\label{w-reg}
\|\nabla\cdot\bm w\|_1+\|\nabla\times\bm w\|_1\leq C\|\bm v_h\|.
\end{align}
Multiplying the first equation of \eqref{wdef} by $\bm v_h$ and integrating over $\Omega$ by parts, we have
	\begin{align*}
	\|\bm v_h\|^2&=(\nabla\times\nabla\times \bm w -\nabla\nabla\cdot\bm w,\bm v_h)_{\Omega_{-}\cup\Omega_{+}}\\
	&=(\nabla\times \bm w ,\nabla\times\bm v_h)_{\Omega_{-}\cup\Omega_{+}}+(\nabla\cdot\bm w ,\nabla_h\cdot\bm v_h)_{\Omega_{-}\cup\Omega_{+}}\\
	&+\sum_{K\in\mathcal T_h^{\Gamma}}\int_{\Gamma_K}[\![\bm n\times\bm v_h]\!]\nabla\times\bm w\d s+\sum_{K\in\mathcal T_h^{\Gamma}}\int_{\Gamma_K}[\![\bm n\cdot\bm v_h]\!]\nabla\cdot\bm w\d s\\
	&+\sum_{E\in\mathcal E_h}\int_{E}[\![\bm n\cdot\bm v_h]\!]\nabla\cdot\bm w\d s\\
	\leq & C\Big(\|\nabla\times\bm v_h\|_{\Omega_{-}\cup\Omega_{+}}^2+ \|\nabla_h\cdot\bm v_h\|_{\Omega_{-}\cup\Omega_{+}}^2+\mathcal G_0(\bm v_h,\bm v_h)+\mathcal G_1(\bm v_h,\bm v_h)\Big)^{1/2}\\
	&\Big(\|\nabla\cdot\bm w\|^2_1+\|\nabla\times\bm w\|^2_1\Big)^{1/2},
	\end{align*}
which, together with \eqref{w-reg}, yields \eqref{v-poincare}.
\end{proof}

With the previous preparation, we now establish the coercivity and continuity of the discrete bilinear form $\mathcal{A}_h(\cdot, \cdot)$ with respect to the discrete norm \eqref{equ_discretenorm}.
\begin{lemma}\label{CoreCon}
Suppose that the penalty parameter $\lambda$ is sufficiently large, then the bilinear form $\mathcal{A}_h$ satisfies the following inequalities
\begin{align}
&\left|\mathcal{A}_h(\bu_h,\bv_h)\right|\leq c_2\3bar \bu_h\3bar_{\ast} \3bar\bv_h\3bar_{\ast}, \quad \forall \bu_h,\bv_h\in \boldsymbol{V}_h\label{continuity},\\
&\mathcal{A}_h(\bu_h,\bu_h)\geq c_1\3bar \bu_h\3bar_{\ast}^2,\quad \forall \bu_h\in \boldsymbol{V}_h.\label{coercivity}
\end{align}
\end{lemma}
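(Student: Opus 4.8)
The plan is to verify the two estimates separately, treating the continuity \eqref{continuity} as routine bookkeeping and reserving the real work for the coercivity \eqref{coercivity}.

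For continuity, I would expand $\mathcal{A}_h(\bu_h,\bv_h)$ term by term and apply Cauchy--Schwarz. The two volume integrals and the divergence term are immediately dominated by the matching pieces of $\3bar\cdot\3bar_\ast$, while $\sum_{i}\mathcal{G}_i$ and $J_1+J_2$ already appear in the norm. The only terms needing care are the four consistency integrals on $\Gamma$. For a representative one, $\int_{\Gamma}\{\!\!\{\bn\times(\alpha\bm{\nabla}\times\nabla\times\bu_h)\}\!\!\}[\![\nabla\times\bv_h]\!]\,\d s$, I would split it element-wise over $K\in\mathcal{T}_h^{\Gamma}$, bound the average factor by the first trace inequality in \eqref{TraceResult} by $Ch^{-1/2}\|\bm{\nabla}\times\nabla\times\bu_h\|_K$ (note $\bm{\nabla}\times\nabla\times\bu_h$ is a polynomial on each side), and control the jump factor against $\mathcal{G}_2(\bv_h,\bv_h)$ after inserting the matching power of $h$. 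The terms carrying $[\![\bn\times\cdot]\!]$ require in addition the inverse inequality \eqref{inverseinq} to absorb the extra curl and are matched against $\mathcal{G}_1$. Since all interface elements lie in $\Omega_{h,-}\cup\Omega_{h,+}$, the curlcurl factors are bounded by $\|\bm{\nabla}\times\nabla\times\bu_h\|_{\Omega_{h,-}\cup\Omega_{h,+}}$, and summing yields \eqref{continuity} with $c_2$ independent of $h$ and of the interface location.

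The coercivity is the crux. Setting $\bv_h=\bu_h$, the definite contributions are $\int_{\Omega_-\cup\Omega_+}\alpha|\bm{\nabla}\times\nabla\times\bu_h|^2$, $\int\gamma|\bu_h|^2$, $h^{-2}\|\nabla_h\cdot\bu_h\|^2$, $\sum_i\mathcal{G}_i(\bu_h,\bu_h)$ and $J_1+J_2$, while the two symmetric consistency pairs are indefinite. I would estimate each indefinite term as in the continuity step --- trace inequality on the average, inverse inequality where needed, then Young's inequality --- but now splitting so the curlcurl part lands on $\int\alpha|\bm{\nabla}\times\nabla\times\bu_h|^2$ with a small factor $\delta$, and the jump part lands on $\mathcal{G}_2$ (resp. $\mathcal{G}_1$) with a factor $1/(\delta\lambda)$. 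The crucial scaling is that the powers $h^{-1}$ in $\mathcal{G}_2$ and $h^{-3}$ in $\mathcal{G}_1$ are exactly those generated by the trace and inverse inequalities, so choosing $\delta$ small and then $\lambda$ large absorbs both indefinite terms and retains control, with fixed positive constants, of $\|\bm{\nabla}\times\nabla\times\bu_h\|^2_{\Omega_-\cup\Omega_+}$, $h^{-2}\|\nabla_h\cdot\bu_h\|^2$, $\sum_i\mathcal{G}_i$, and $J_1+J_2$.

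The remaining task, and the main obstacle, is to recover the two pieces of $\3bar\cdot\3bar_\ast$ that the bilinear form does not control directly: $\|\bu_h\|^2_{\Omega_{h,-}\cup\Omega_{h,+}}$ (observe $\gamma$ may vanish) and the extension of the curlcurl norm from $\Omega_-\cup\Omega_+$ to the fictitious domains $\Omega_{h,\pm}$. Here I would invoke Lemma \ref{poincare}: first \eqref{curlv-poincare} gives $\|\nabla\times\bu_h\|^2_{\Omega_-\cup\Omega_+}\le C(\|\bm{\nabla}\times\nabla\times\bu_h\|^2_{\Omega_-\cup\Omega_+}+\mathcal{G}_2)$, and then \eqref{v-poincare} bounds $\|\bu_h\|^2$ by $\|\nabla\times\bu_h\|^2$, $\|\nabla_h\cdot\bu_h\|^2$, $\mathcal{G}_0$ and $\mathcal{G}_1$, all now under control. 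Finally the ghost-penalty estimates \eqref{ghostu} and \eqref{ghostcurlcurlu} of Lemma \ref{ghostestimate} upgrade $\|\bu_h\|^2_{\Omega_-\cup\Omega_+}$ and $\|\bm{\nabla}\times\nabla\times\bu_h\|^2_{\Omega_-\cup\Omega_+}$ to the full $\Omega_{h,\pm}$ norms at the cost of $h^2J_1$ and $J_2$, already retained. Collecting these inequalities gives $\mathcal{A}_h(\bu_h,\bu_h)\ge c_1\3bar\bu_h\3bar^2_\ast$. The delicate points throughout are tracking the $h$-powers so that the absorption is uniform in the interface position, and applying the chain Poincar\'e $\to$ ghost penalty in the correct order, since the Poincar\'e bounds live on $\Omega_-\cup\Omega_+$ whereas the target norm lives on $\Omega_{h,-}\cup\Omega_{h,+}$.
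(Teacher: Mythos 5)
Your proposal is correct and takes essentially the same route as the paper's proof: Cauchy--Schwarz with the trace inequalities \eqref{TraceResult} and inverse inequality \eqref{inverseinq} for \eqref{continuity}, and for \eqref{coercivity} absorbing the two indefinite interface terms via Young's inequality with $\lambda$ large while recovering $\|\bu_h\|^2$ and the fictitious-domain norms through Lemma \ref{poincare} and Lemma \ref{ghostestimate}. The only (immaterial) difference is the ordering --- the paper applies the Poincar\'e/ghost-penalty step first and then absorbs the interface terms, whereas you absorb first.
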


\begin{proof}
To prove \eqref{continuity}, we first use the trace inequality (\ref{TraceResult}) and inverse inequality \eqref{inverseinq} to obtain
\begin{align*}
&\|\bn\times(\alpha\bm{\nabla}\times\nabla\times\bu_h)^i\|_{\Gamma_K}\leq Ch^{-\frac{1}{2}}\|(\bm{\nabla}\times\nabla\times\bu_h)^i\|_{K}\quad \text{for }i=\pm,\\
&\|\nabla\times(\alpha\bm{\nabla}\times\nabla\times\bu_h)^i\|_{\Gamma_K}\leq Ch^{-\frac{3}{2}}\|(\bm{\nabla}\times\nabla\times\bu_h)^i\|_{K}\quad \text{for }i=\pm.
\end{align*}
Furthermore, recalling that $\{\!\!\{\xi\}\!\!\}=k_1\xi^-+k_2\xi^+$ and applying Young's inequality, we have
\begin{equation}\label{Average2}
\begin{aligned}
&\quad\|\{\!\!\{\bn\times(\alpha\bm{\nabla}\times\nabla\times\bu_h)\}\!\!\}\|^2_{\Gamma_K} \\
&\leq Ch^{-1}\|(\bm{\nabla}\times\nabla\times\bu_h)^-\|^2_{K}+Ch^{-1}\|(\bm{\nabla}\times\nabla\times\bu_h)^+\|^2_{K},\\
&\quad\|\{\!\!\{\nabla\times(\alpha\bm{\nabla}\times\nabla\times\bu_h)\}\!\!\}\|^2_{\Gamma_K} \\
&\leq Ch^{-3}\|(\bm{\nabla}\times\nabla\times\bu_h)^-\|^2_{K}+Ch^{-3}\|((\bm{\nabla}\times\nabla\times\bu_h)^+\|^2_{K}.
\end{aligned}
\end{equation}
We can then obtain \eqref{continuity} by applying Cauchy-Schwarz inequality and \eqref{Average2}.

To prove \eqref{coercivity}, we first have
\begin{equation}\label{CoBilinear}
\begin{aligned}
\mathcal{A}_h(\bu_h,\bu_h)&=\3bar \bu_h\3bar^2 +J_1(\bm u_{h},\bm u_{h})+J_2(\bm u_h,\bm u_h) \\
&\quad+2\sum_{K\in \mathcal T_h^{\Gamma}}\int_{\Gamma_K}\{\!\!\{\bn\times(\alpha\bm{\nabla}\times\nabla\times\bu_h)\}\!\!\}\left[\![\nabla\times \bu_h\right]\!]\d s\\
&\quad-2\sum_{K\in\mathcal T_h^{\Gamma}}\int_{\Gamma_K}\{\!\!\{\nabla\times(\alpha\bm{\nabla}\times\nabla\times\bu_h)\}\!\!\}\left[\![\bn\times \bu_h\right]\!]\d s.
\end{aligned}
\end{equation}
where  $\3bar \bv\3bar^2 :=\sum_{j=\pm}\|\alpha^{1/2}\bm{\nabla}\times\nabla\times \bv\|^2_{\Omega_{j}}+\gamma\sum_{j=\pm}\|\bv\|^2_{\Omega_{j}}+\sum_{j=\pm}h^{-2}\|\nabla_h\cdot \bv\|^2_{\Omega_j}+\sum_{i=0}^{2}\mathcal{G}_{i}(\bv,\bv)
$.
According to Lemma \ref{poincare}, we have
\[\3bar \bu_h\3bar^2\geq C\big(\|\bm u_h\|^2_{\Omega_-\cup\Omega_+}+\|\nabla \times\bm u_h\|_{\Omega_-\cup\Omega_+}^2+\3bar \bu_h\3bar^2\big) \text{ when }h\leq 1,
\]
which, together with Lemma \ref{ghostestimate}, leads to
\begin{equation}\label{CoBilinear_omegah}
\begin{aligned}
\mathcal{A}_h(\bu_h,\bu_h)\geq c_0\3bar \bu_h\3bar_{\ast}^2&+2\sum_{K\in \mathcal T_h^{\Gamma}}\int_{\Gamma_K}\{\!\!\{\bn\times(\alpha\bm{\nabla}\times\nabla\times\bu_h)\}\!\!\}\left[\![\nabla\times \bu_h\right]\!]\d s\\
&-2\sum_{K\in\mathcal T_h^{\Gamma}}\int_{\Gamma_K}\{\!\!\{\nabla\times(\alpha\bm{\nabla}\times\nabla\times\bu_h)\}\!\!\}\left[\![\bn\times \bu_h\right]\!]\d s.
\end{aligned}
\end{equation}

For $K\in\mathcal T_h^{\Gamma}$, applying the Cauchy-Schwarz inequality, Young's inequality, and \eqref{Average2}, we have
\begin{align*}
&\left|\int_{\Gamma_K}\{\!\!\{\bn\times(\alpha\bm{\nabla}\times\nabla\times\bu_h)\}\!\!\}\left[\![\nabla\times\bu_h\right]\!] \d s\right| \leq\|\{\!\!\{\bn\times(\alpha\bm{\nabla}\times\nabla\times\bu_h)\}\!\!\}\|_{\Gamma_K}\|\left[\![\nabla\times\bu_h\right]\!]\|_{\Gamma_K}\\
&\leq\frac{h}{C_0\lambda}\|\{\!\!\{\bn\times(\alpha\bm{\nabla}\times\nabla\times\bu_h)\}\!\!\}\|^2_{\Gamma_K}+\frac{c_0\lambda}{4h}\|\left[\![\nabla\times\bu_h\right]\!]\|^2_{\Gamma_K}\\
&\leq\frac{C}{\lambda}\|(\bm{\nabla}\times\nabla\times\bu_h)^-\|^2_{K}+\frac{C}{\lambda}\|(\bm{\nabla}\times\nabla\times\bu_h)^+\|^2_{K}+\frac{c_0\lambda}{4h}\|\left[\![\nabla\times\bu_h\right]\!]\|^2_{\Gamma_K},
\end{align*}
and
\begin{align*}
&\left|\int_{\Gamma_K}\{\!\!\{\nabla\times(\alpha\bm{\nabla}\times\nabla\times\bu_h)\}\!\!\}\left[\![\bn\times\bu_h\right]\!] \d s\right| \\
\leq &\frac{C}{\lambda}\|(\bm{\nabla}\times\nabla\times\bu_h)^-\|^2_{K}+\frac{C}{\lambda}\|(\bm{\nabla}\times\nabla\times\bu_h)^+\|^2_{K}+\frac{c_0\lambda}{4h^3}\|\left[\![\bn\times\bu_h\right]\!]\|^2_{\Gamma_K},
\end{align*}
which, plugged into \eqref{CoBilinear_omegah}, yields \eqref{coercivity} when $\lambda$ is sufficiently large.  
\end{proof}

We are now in a position to prove our main well-posedness result.

\begin{theorem}
The discrete problem (\ref{DisWeak}) has a unique solution when taking the penalty parameter $\lambda$ sufficiently large.
\end{theorem}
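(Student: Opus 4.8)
The plan is to verify the hypotheses of the Lax--Milgram lemma on the finite-dimensional space $\boldsymbol{V}_h$ equipped with the discrete norm $\3bar\cdot\3bar_{\ast}$ from \eqref{equ_discretenorm}. First I would observe that $\3bar\cdot\3bar_{\ast}$ is genuinely a norm on $\boldsymbol{V}_h$: since it dominates the full $L^2$-norm $\|\bv_h\|_{\Omega_{h,-}\cup\Omega_{h,+}}$, the vanishing of $\3bar\bv_h\3bar_{\ast}$ forces $\bv_h=0$. As each summand in \eqref{equ_discretenorm} is a quadratic form, $\3bar\cdot\3bar_{\ast}$ is induced by an inner product, so $(\boldsymbol{V}_h,\3bar\cdot\3bar_{\ast})$ is a finite-dimensional Hilbert space. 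With the penalty parameter $\lambda$ taken sufficiently large, Lemma \ref{CoreCon} supplies exactly the continuity \eqref{continuity} and coercivity \eqref{coercivity} of $\mathcal{A}_h$ with respect to this norm.

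The one remaining ingredient is the boundedness of the linear functional $\mathcal{L}_h$. The volume contribution is controlled directly by Cauchy--Schwarz, $\left|\int_{\Omega_-\cup\Omega_+}\boldsymbol{f}\cdot\bv_h\,\d A\right|\le \|\boldsymbol{f}\|\,\|\bv_h\|_{\Omega_-\cup\Omega_+}\le\|\boldsymbol{f}\|\,\3bar\bv_h\3bar_{\ast}$. For the two interface terms I would use the duality pairings on $\Gamma$: bound $\int_{\Gamma}\varphi_3\,\{\!\!\{\nabla\times\bv_h\}\!\!\}^{*}\,\d s$ by $\|\varphi_3\|_{H^{-1/2}(\Gamma)}\|\{\!\!\{\nabla\times\bv_h\}\!\!\}^{*}\|_{H^{1/2}(\Gamma)}$ and, since $\varphi_4\in H^{1/2}(\Gamma)\hookrightarrow L^2(\Gamma)$, bound $\int_{\Gamma}\varphi_4\,\{\!\!\{\bn\times\bv_h\}\!\!\}^{*}\,\d s$ by $\|\varphi_4\|_{L^2(\Gamma)}\|\{\!\!\{\bn\times\bv_h\}\!\!\}^{*}\|_{L^2(\Gamma)}$. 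It then remains to estimate the averaged traces of the discrete function on $\Gamma$ by the discrete norm, which is where the trace inequalities of Lemma \ref{traceineq}, together with the inverse inequality \eqref{inverseinq}, enter: summing over the cut elements $K\in\mathcal{T}_h^{\Gamma}$ and using that the starred average is a convex combination of the one-sided traces should yield $\|\{\!\!\{\bn\times\bv_h\}\!\!\}^{*}\|_{L^2(\Gamma)}^2 + \|\{\!\!\{\nabla\times\bv_h\}\!\!\}^{*}\|_{H^{1/2}(\Gamma)}^2\le C\3bar\bv_h\3bar_{\ast}^2$, and hence $|\mathcal{L}_h(\bv_h)|\le C\3bar\bv_h\3bar_{\ast}$.

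I expect the main obstacle to be this last trace estimate on $\Gamma$, specifically obtaining the $H^{1/2}(\Gamma)$ control of $\{\!\!\{\nabla\times\bv_h\}\!\!\}^{*}$ with a constant that is independent of how $\Gamma$ cuts the mesh; the ghost penalty terms and the $\mathcal{G}_i$ contributions embedded in $\3bar\cdot\3bar_{\ast}$, accessed through Lemmas \ref{ghostestimate} and \ref{poincare}, are what keep these bounds uniform in the interface position. Once the continuity and coercivity of $\mathcal{A}_h$ and the boundedness of $\mathcal{L}_h$ are all in hand, the Lax--Milgram lemma immediately yields a unique $\bu_h\in\boldsymbol{V}_h$ solving \eqref{DisWeak}, which completes the proof. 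Alternatively, one may note that in the finite-dimensional setting coercivity alone makes the associated linear operator injective, hence bijective, so existence and uniqueness follow at once.
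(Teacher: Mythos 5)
Your proposal is correct, and its closing sentence is in fact the paper's entire proof: the paper argues that since \eqref{DisWeak} is posed on the finite-dimensional space $\boldsymbol{V}_h$, existence is equivalent to uniqueness, and uniqueness follows at once from the coercivity \eqref{coercivity} of Lemma \ref{CoreCon} applied to the homogeneous problem ($0=\mathcal{A}_h(\bu_h,\bu_h)\geq c_1\3bar\bu_h\3bar_{\ast}^2$ forces $\bu_h=0$, because $\3bar\cdot\3bar_{\ast}$ is a norm). The main body of your argument---the full Lax--Milgram verification including boundedness of $\mathcal{L}_h$---is a heavier route, and the step you yourself single out as the main obstacle is both unnecessary and problematic as stated. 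It is unnecessary because on a finite-dimensional space every linear functional is automatically continuous (with a constant that may depend on $h$, which is irrelevant for existence and uniqueness at fixed $h$); uniform-in-$h$ control of $\mathcal{L}_h$ matters only for stability and error estimates, not for well-posedness of the algebraic system. It is problematic because $\{\!\!\{\nabla\times\bv_h\}\!\!\}^{*}$ is assembled from one-sided traces of piecewise polynomials and can jump at the points where $\Gamma$ crosses element edges, so it need not belong to $H^{1/2}(\Gamma)$ at all; consequently the claimed bound $\|\{\!\!\{\nabla\times\bv_h\}\!\!\}^{*}\|_{H^{1/2}(\Gamma)}\leq C\3bar\bv_h\3bar_{\ast}$ with an interface-independent constant is not obtainable by the route you sketch. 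In short, what the Lax--Milgram framing buys you here is nothing beyond what coercivity alone already gives, while the paper's injectivity-implies-bijectivity argument buys exactly the avoidance of these delicate trace issues; your fallback sentence is the clean and complete proof.
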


\begin{proof} 
Since the discrete problem \eqref{DisWeak} is finite-dimensional, the existence of a solution is equivalent to the uniqueness. It suffices to prove uniqueness. To this end, we consider the associated homogeneous problem and show that $\bu_h=0$ is the only solution. 

By Lemma \ref{CoreCon}, we have
\[0=\mathcal A_h(\bm u_h,\bm u_h)\geq c_1\3bar\bm u_h\3bar_{\ast}^2,\]
which implies $\bm u_h=0$ since $\3bar\cdot\3bar_{\ast}$ is a norm. Hence, the solution to the homogeneous problem is trivial, and this implies the uniqueness of the solution to the original discrete problem. Thus, the discrete problem (\ref{DisWeak}) has a unique solution when the penalty parameter $\lambda$ is sufficiently large.
\end{proof}

\section{Error analysis}\label{Sec:error}
In this section, we will derive a priori error estimates in the discrete norm
\begin{equation}\label{equ:newnorm}
\begin{aligned}
\3bar \bv\3bar_h^2 :=&\|\bm{\nabla}\times\nabla\times \bv\|^2_{\Omega_{-}\cup\Omega_+}+\|\bv\|^2_{\Omega_{-}\cup\Omega_+}+h^{-2}\|\nabla_h\cdot \bv\|^2_{\Omega_{-}\cup\Omega_+}+\sum_{i=0}^{2}\mathcal{G}_{i}(\bv,\bv) \\
&+\sum_{K\in\mathcal T_h^{\Gamma}}\frac{h^3}{\lambda}\|\{\!\!\{\nabla\times\left(\alpha\bm{\nabla}\times\nabla\times\bv\right)\}\!\!\}\|^2_{\Gamma_K}+\sum_{K\in\mathcal T_h^{\Gamma}}\frac{h}{\lambda}\|\{\!\!\{\bn\times\left(\alpha\bm{\nabla}\times\nabla\times\bv\right)\}\!\!\}\|^2_{\Gamma_K}.
\end{aligned}
\end{equation}
Using Lemma \ref{traceineq}, we can show that
\begin{align}
\3bar \bv\3bar_h\leq C\3bar \bv\3bar_{\ast}\quad\text{for }\bm v\in \bm V_h,\label{normrelation}\\
a_h(\bm u_h,\bm v_h)\leq C\3bar\bm u_h\3bar_{h}\3bar\bm v_h\3bar_{h}.\label{errorestimate}
\end{align}

In addition, we will present some Sobolev extension results and approximation properties of $\boldsymbol{V}_h$.
\begin{lemma}[{\cite{Calder1961, Stein1966}}]\label{ExtenLem}
For $i=\pm$, assume $\Omega_i$ is a bounded Lipschitz domain. For $s\geq 0$, there exist extension operators $\mathcal{E}_i: \bm H^s(\Omega_i)\to \bm H^s(\Omega)$ such that $\mathcal{E}_i\bu^i=\bu^i$ in $\Omega_i$ and $\|\mathcal{E}_i\bu^i\|_{s,\Omega}\leq C\|\bu^i\|_{s,\Omega_i}$ for $\bu^i\in \bm H^s(\Omega_i)$. Here, the constant $C=C(s,\Omega_i)$.
\end{lemma}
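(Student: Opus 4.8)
The statement is the classical Stein--Calder\'on extension theorem specialized to the planar Lipschitz subdomains $\Omega_\pm$, so the plan is to reproduce Stein's construction of a \emph{single} operator that is bounded on every $\boldsymbol{H}^s$ simultaneously. Since $\Omega_i\subset\Omega$ and the $\Omega$-norm is dominated by the full $\mathbb{R}^2$-norm, it suffices to build a bounded operator into $\boldsymbol{H}^s(\mathbb{R}^2)$ and then restrict to $\Omega$; and since the extension can be applied componentwise, I would reduce immediately to scalar $u\in H^s(\Omega_i)$. The first reduction is localization: because $\partial\Omega_i$ is compact and Lipschitz, I would cover it by finitely many open sets $U_1,\dots,U_M$ in each of which, after a rigid rotation, $\Omega_i$ coincides with the region above the graph of a Lipschitz function $\varphi_j$, and add one interior set $U_0\Subset\Omega_i$. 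Choosing a smooth partition of unity $\{\chi_j\}_{j=0}^M$ subordinate to this cover and using that multiplication by a fixed smooth cutoff is bounded on $H^s$, the task collapses to extending $\chi_0 u$ (trivial, by extension by zero) and each boundary piece, i.e.\ to extending a function from one special Lipschitz domain $\{x_2>\varphi(x_1)\}$.

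On such a special domain I would introduce a regularized distance $\delta^*(x)$, smooth off the boundary, comparable to $\operatorname{dist}(x,\partial\Omega_i)$, and obeying the scaling bounds $|\partial^\alpha\delta^*(x)|\leq C_\alpha\operatorname{dist}(x)^{1-|\alpha|}$. For a point $x=(x_1,x_2)$ lying below the graph I then define
\begin{equation*}
(\mathcal{E}u)(x)=\int_1^\infty u\big(x_1,\,x_2+\lambda\,\delta^*(x)\big)\,\psi(\lambda)\,\d\lambda,
\end{equation*}
where $\psi$ is a fixed weight on $[1,\infty)$ with the normalization $\int_1^\infty\psi\,\d\lambda=1$, the vanishing-moment conditions $\int_1^\infty\lambda^k\psi(\lambda)\,\d\lambda=0$ for every $k\geq 1$, and rapid decay $\psi(\lambda)=O(\lambda^{-N})$ for all $N$. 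For $\lambda\geq 1$ the sample point is pushed into $\Omega_i$, so $\mathcal{E}u$ is well defined; the normalization forces $\mathcal{E}u=u$ across the interface, while the vanishing moments force every derivative to match there as well.

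For the bounds I would differentiate under the integral, perform the change of variables $y_2=x_2+\lambda\delta^*(x)$, and combine the scaling estimates on $\delta^*$ with the decay of $\psi$ to obtain $\|\mathcal{E}u\|_{H^k(\mathbb{R}^2)}\leq C_k\|u\|_{H^k(\Omega_i)}$ for every nonnegative integer $k$ from the one operator. Real (or complex) interpolation between consecutive integers then upgrades this to $\|\mathcal{E}u\|_{H^s}\leq C(s)\|u\|_{H^s}$ for all real $s\geq 0$, which accounts for the $s$-dependence of the constant in the statement. Finally I would reassemble the global operator $\mathcal{E}_i=\sum_j\chi_j\,\mathcal{E}_j$ (with $\mathcal{E}_j$ the local reflections pulled back through the rotations), multiply by a cutoff supported in a neighborhood of $\Omega_i$ to keep the result in $H^s(\mathbb{R}^2)$, and restrict to $\Omega$, recovering both $\mathcal{E}_i u^i=u^i$ in $\Omega_i$ and the claimed norm bound with $C=C(s,\Omega_i)$.

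The technical heart, and the step I expect to be the main obstacle, is the simultaneous boundedness on \emph{all} $H^k$ from a single operator. This is precisely what separates Stein's construction from the elementary finite-order (Babi\v{c}/Hestenes) reflection, and it rests entirely on the interplay between the regularized-distance scaling bounds and the infinitely many vanishing moments of $\psi$; verifying that the change-of-variables Jacobian and the growing derivatives of $\delta^*$ do not spoil the estimate as $\lambda\to\infty$ is the delicate calculation that makes the uniform-in-$k$ control possible.
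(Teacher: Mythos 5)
The paper does not prove this lemma at all: it is quoted as a known result from \cite{Calder1961,Stein1966}, and your proposal is a faithful outline of exactly the construction in those references (Stein's universal extension operator: localization to special Lipschitz graph domains, the regularized distance $\delta^*$, the weight $\psi$ with vanishing moments giving boundedness on every integer-order space from a single operator, and interpolation to reach fractional $s$). Your approach therefore coincides with the paper's intended source, and there is no discrepancy to report.
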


We present an interpolation error estimate. 
\begin{lemma} [{\cite{HuZhangZhang2020}}]\label{EstimateElem}
Suppose $\bu\in \bH^{s+1}(\Omega)$, $s\geq 1+\delta$ with $\delta>0$, then the following interpolation error estimates hold
\begin{align}
\|\bu-\Pi_h\bu\|_m\leq C h^{\min\{s+1,3\}-m}\|\bu\|_{s+1}\text{ for }m=0,1,2,3,4,
\end{align}
where $\Pi_h : H^{s+1}(\Omega)\to \mathcal S_h(\mathcal T_h)$ is the interpolation operator defined by the degrees of freedom \eqref{DOF1} -- \eqref{DOF3}. 
\end{lemma}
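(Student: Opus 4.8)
The plan is to reduce the global estimate to a local one on each element via the standard affine-equivalence and Bramble--Hilbert argument, the only nonroutine points being the well-definedness of the degrees of freedom and the correct bookkeeping of the scaling for the curl-type functionals.

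First I would check that $\Pi_h\bu$ makes sense. The vertex functionals \eqref{DOF1} require the pointwise values $(\nabla\times\bu)(v_i)$. Since $\bu\in\boldsymbol{H}^{s+1}(\Omega)$ with $s\geq 1+\delta$, we have $\nabla\times\bu\in H^{s}(\Omega)$ with $s>1$, so by the Sobolev embedding $H^{s}(\Omega)\hookrightarrow C^0(\overline\Omega)$ in two dimensions these point evaluations are bounded; the edge and interior moments \eqref{DOF1}--\eqref{DOF3} are controlled by trace and $\boldsymbol{L}^2$ bounds. Thus each local functional is bounded on $\boldsymbol{H}^{s+1}(K)$ with the appropriate scaling.

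Next I would pass to a reference element $\widehat K$ through the affine map $F_K$ and verify unisolvence of the $13$ functionals on $\mathcal S_h(\widehat K)=[P_2(\widehat K)]^2\oplus\Span\{\mathfrak p B\}$ (dimension $12+1=13$, matching $3$ vertex, $9$ edge, and $1$ interior functionals). Because $[P_2(\widehat K)]^2\subseteq\mathcal S_h(\widehat K)$, the local interpolation operator $\widehat\Pi$ reproduces every $\widehat{\boldsymbol p}\in[P_2(\widehat K)]^2$. Combining the boundedness of $\widehat\Pi$ on $\boldsymbol{H}^{\min\{s+1,3\}}(\widehat K)$ with polynomial reproduction up to degree $2$, the Bramble--Hilbert lemma gives
\[
\|\widehat{\bu}-\widehat\Pi\,\widehat{\bu}\|_{m,\widehat K}\;\le\;C\,|\widehat{\bu}|_{\min\{s+1,3\},\widehat K},\qquad m=0,1,2,3,4.
\]

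Finally I would scale back. Using $\Pi_h\bu|_K=(\widehat\Pi\,\widehat{\bu})\circ F_K^{-1}$ together with the standard seminorm transformation estimates under the affine map (which introduce the powers $h^{-m}$ on the left and $h^{\min\{s+1,3\}}$ on the right because $\mathcal T_h$ is shape-regular), I obtain the local bound
\[
\|\bu-\Pi_h\bu\|_{m,K}\le C\,h^{\min\{s+1,3\}-m}\,|\bu|_{\min\{s+1,3\},K},
\]
and summing over $K\in\mathcal T_h$, followed by $|\bu|_{\min\{s+1,3\}}\le\|\bu\|_{s+1}$, yields the claim (the norms $\|\cdot\|_m$ for $m\ge 2$ being understood element-wise, since $\mathcal S_h(\mathcal T_h)$ is only curl-curl conforming). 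The main obstacle is the scaling step: the vertex functional \eqref{DOF1} involves a derivative ($\nabla\times\bu$) evaluated pointwise, so its transformation under $F_K$ differs from that of the purely integral edge and interior functionals, and one must track these distinct scalings to confirm that $\widehat\Pi$ is bounded uniformly in $K$ and that the $h$-powers assemble correctly — this is also precisely where the hypothesis $s>1$ is essential.
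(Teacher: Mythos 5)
First, a point of context: the paper itself does not prove this lemma at all --- it is imported verbatim from \cite{HuZhangZhang2020} --- so your argument has to stand on its own. It does not, because the affine-equivalence step on which the whole proof rests fails for this particular element. You transfer everything to the reference element via the plain pullback $\widehat{\bu}=\bu\circ F_K$ and assert $\Pi_h\bu|_K=(\widehat{\Pi}\,\widehat{\bu})\circ F_K^{-1}$. For a curl-conforming element the plain pullback is already the wrong map: it preserves neither the tangential edge moments nor the curl (so neither the edge functionals nor the vertex values $(\nabla\times\bu)(v_i)$ correspond), and it does not map $\mathcal S_h(K)$ onto $\mathcal S_h(\widehat{K})$; the natural map is the covariant Piola transform $\bu\circ F_K=B_K^{-T}\widehat{\bu}$, which is the one this paper uses in Lemma~\ref{Vectortofunc}. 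More importantly, even under the Piola map this family is \emph{not} affine-equivalent, because the bubble $\mathfrak{p}B$ and the interior weight $\boldsymbol{x}$ are anchored to the global origin through the Poincar\'e operator. The Piola image of $\mathfrak{p}\widehat{B}$ has curl $\det(B_K)^{-1}B$, hence differs from $\det(B_K)^{-1}\mathfrak{p}B$ by a curl-free polynomial field, and an explicit computation shows this discrepancy has a nonvanishing cubic part: already for the pure translation $F_K(\widehat{\boldsymbol{x}})=\widehat{\boldsymbol{x}}+(1,0)^{\mathrm T}$ it equals $\nabla\left(\tfrac{1}{20}x_1x_2^3+\tfrac{1}{20}x_1^2x_2^2\right)$ plus lower-order terms, the gradient of a genuine quartic, which lies outside $[P_2(K)]^2$. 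So the image of $\mathcal S_h(\widehat{K})$ is not $\mathcal S_h(K)$; similarly $\int_K\bu\cdot\boldsymbol{x}\,\mathrm{d}x$ pulls back to a combination of $\int_{\widehat{K}}\widehat{\bu}\cdot\widehat{\boldsymbol{x}}\,\mathrm{d}\widehat{x}$ and moments of $\widehat{\bu}$ against constant vectors, which are not reference degrees of freedom. Consequently unisolvence and Bramble--Hilbert on $\widehat{K}$ transfer nothing to $K$, and the identity $\Pi_h\bu|_K=(\widehat{\Pi}\,\widehat{\bu})\circ F_K^{-1}$ is false. The difficulty you flag at the end (inhomogeneous scalings of the pointwise-curl versus integral functionals) is real but routine --- Hermite-type elements handle exactly that; the genuine obstruction is the non-invariance of the shape space and of the functionals themselves. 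A correct self-contained proof must either work directly on the physical element (explicit $h$-scalings of the nodal basis and dual functionals, Dupont--Scott averaged Taylor polynomials) or exploit a commuting property of $\Pi_h$ with $\nabla\times$; this is precisely why the result is quoted from \cite{HuZhangZhang2020} rather than reproved here.

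A second, smaller gap: for $m>\min\{s+1,3\}$ (in particular $m=4$, and $m=3$ when $s+1<3$) your reference-element estimate $\|\widehat{\bu}-\widehat{\Pi}\widehat{\bu}\|_{m,\widehat{K}}\le C|\widehat{\bu}|_{\min\{s+1,3\},\widehat{K}}$ cannot hold as stated: the interpolant is a polynomial, but $\widehat{\bu}$ need not belong to $H^m(\widehat{K})$, so the left-hand side is generically infinite, and reading the norms element-wise does not repair this, since it is $\bu$, not $\Pi_h\bu$, that lacks the derivatives. Note that where the paper actually needs the orders $l=3,4$ (the proof of Lemma~\ref{WeakConsistency}) it never forms $\|\bu-\Pi_h\bu\|_m$ beyond the regularity of $\bu$: those terms are split off and only $\partial_n^l\Pi_h\bu$ is estimated, via inverse inequalities applied to the polynomial alone. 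Any proof of the lemma (and, arguably, its statement) must be interpreted and handled in that way.
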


To ease the notation, for $\bu\in \bH^s(\Omega_-\cup\Omega_+)$, denote $\bu^i=\bu|_{\Omega_i}$ and $ \bu^i_{\mathcal E}=\mathcal{E}_i\bu^i\in \bH^s(\Omega)$ for $i=\pm$. Then the interpolation $\Pi_h^{\ast} \bu\in\bm V_h$ is defined as  
\begin{equation}
\Pi_h^{\ast}\bu=\Pi_h\bu^i_{\mathcal E} \text{ in } \Omega_{h,i}\text{ for } i=\pm.
\end{equation}

For the interpolation operator $\Pi_h^{\ast}$, we can prove the following approximation property.
\begin{lemma}\label{InterpolationEstimate}
	Suppose $\bu\in \bH^3(\Omega_-\cup\Omega_+)$, then the following estimate holds
	\begin{equation}
	\3bar \bu-\Pi_h^{\ast}\bu\3bar_h\leq Ch\sum_{i=\pm}\|\bu\|_{3,\Omega_i}.
	\end{equation}
\end{lemma}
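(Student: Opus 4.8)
The plan is to bound each of the six terms in the discrete norm $\3bar\cdot\3bar_h$ defined in \eqref{equ:newnorm} separately, reducing everything to the elementwise interpolation estimates of Lemma \ref{EstimateElem} applied to the Sobolev extensions $\bu^i_{\mathcal E}$. Write $\bm\eta := \bu - \Pi_h^\ast\bu$, so that on each subdomain $\Omega_{h,i}$ we have $\bm\eta = \bu^i_{\mathcal E} - \Pi_h\bu^i_{\mathcal E}$. Since the extension operator $\mathcal E_i$ is bounded by Lemma \ref{ExtenLem} (with $\|\bu^i_{\mathcal E}\|_{3,\Omega}\leq C\|\bu^i\|_{3,\Omega_i}$), Lemma \ref{EstimateElem} with $s=2$ gives $\|\bm\eta\|_{m,\Omega_{h,i}}\leq Ch^{3-m}\|\bu^i\|_{3,\Omega_i}$ for $m=0,1,2,3,4$. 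This single estimate is the workhorse for all terms.

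First I would treat the three volume terms. The term $\|\bm\eta\|^2_{\Omega_-\cup\Omega_+}$ is controlled directly by the $m=0$ case, giving $O(h^3)$; the term $\|\bm{\nabla}\times\nabla\times\bm\eta\|^2_{\Omega_-\cup\Omega_+}$ is bounded by $\|\bm\eta\|^2_{2,\Omega_-\cup\Omega_+}$, i.e.\ the $m=2$ case, giving $O(h)$; and the scaled divergence term $h^{-2}\|\nabla_h\cdot\bm\eta\|^2_{\Omega_-\cup\Omega_+}$ uses the $m=1$ estimate together with the extra $h^{-2}$ factor, again yielding $O(h)$. Each of these is bounded above by $Ch^2\sum_i\|\bu\|^2_{3,\Omega_i}$, consistent with the claimed $O(h)$ bound in the norm itself.

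Next I would handle the interface and ghost-penalty terms $\sum_{i=0}^2\mathcal G_i(\bm\eta,\bm\eta)$ and $J_1,J_2$ (the latter appearing implicitly through the control of the $\mathcal G_i$ and the face terms). For these I would pass from the interface pieces $\Gamma_K$ and edges $E$ to volume norms using the trace inequalities of Lemma \ref{traceineq} (specifically the second, $H^1$-type bound $\|w\|^2_{\Gamma_K}\leq C(h^{-1}\|w\|^2_K + h\|w\|^2_{1,K})$) and \eqref{traceinq2}. Applying these to $\bm\eta$, $\nabla\times\bm\eta$, and their normal derivatives, and then invoking the $m$-dependent interpolation estimates, the powers of $h$ combine with the negative powers $h^{-3}$, $h^{-1}$ in the definitions of $\mathcal G_0,\mathcal G_1,\mathcal G_2$ and the scaling $h^{2l-1}$ in $J_1,J_2$ so that every contribution is again $O(h^2)$ after squaring. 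The two averaged-flux terms $\sum_K\frac{h^3}{\lambda}\|\{\!\!\{\nabla\times(\alpha\bm{\nabla}\times\nabla\times\bm\eta)\}\!\!\}\|^2_{\Gamma_K}$ and $\sum_K\frac{h}{\lambda}\|\{\!\!\{\bn\times(\alpha\bm{\nabla}\times\nabla\times\bm\eta)\}\!\!\}\|^2_{\Gamma_K}$ are treated the same way: trace inequality on the third derivatives of $\bm\eta$ (bounded via the $m=3,4$ interpolation estimates), with the factors $h^3$ and $h$ exactly compensating the negative trace powers.

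The main obstacle I anticipate is the careful bookkeeping of $h$-powers in the ghost-penalty and higher-order flux terms, where normal derivatives up to order four appear and the trace inequalities must be applied to quantities that are not pure polynomials but interpolation errors in $\bm H^3$. Since Lemma \ref{EstimateElem} requires $s\geq 1+\delta$ and only delivers estimates up to $m=4$, one must verify that the regularity $\bu\in\bm H^3(\Omega_-\cup\Omega_+)$ (after extension, $\bm H^3(\Omega)$) is exactly enough to control the fourth normal derivatives appearing in $J_1$ and $J_2$; the borderline term $l=4$ with scaling $h^{7}$ must be checked to still close at $O(h^2)$. Collecting all contributions and taking square roots then gives $\3bar\bm\eta\3bar_h\leq Ch\sum_{i=\pm}\|\bu\|_{3,\Omega_i}$, as claimed.
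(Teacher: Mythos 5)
Your handling of the terms that actually make up $\3bar\cdot\3bar_h$ --- the three volume terms, the penalties $\mathcal G_0,\mathcal G_1,\mathcal G_2$, and the two averaged-flux terms --- coincides with the paper's proof: extend each $\bu^i$ by Lemma \ref{ExtenLem}, apply the elementwise interpolation estimate of Lemma \ref{EstimateElem}, and convert interface quantities to volume quantities with the $H^1$-type trace inequality of Lemma \ref{traceineq}, so that every squared contribution closes at $O(h^2)$. For those terms your bookkeeping is correct (modulo the same caveat the paper silently absorbs in its ``Similarly'': the trace of $\nabla\times(\alpha\bm{\nabla}\times\nabla\times\bxi)$ involves third derivatives of $\bu$, so one must either invoke Lemma \ref{EstimateElem} with $m=4$ formally or grant slightly more regularity for that single flux term).

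The genuine problem is that you misread the norm: $J_1$ and $J_2$ are \emph{not} part of $\3bar\cdot\3bar_h$ as defined in \eqref{equ:newnorm}; they appear only in $\3bar\cdot\3bar_{\ast}$ of \eqref{equ_discretenorm}, which is applied to discrete functions. This is not incidental notation but the entire point of carrying two norms. The ``obstacle'' you flag --- controlling $[\![\partial_n^l(\bu-\Pi_h^\ast\bu)]\!]$ for $l=3,4$ under the assumption $\bu\in\bH^3(\Omega_-\cup\Omega_+)$ --- is not a bookkeeping item to be ``checked''; it is unresolvable, since $\partial_n^4\bu$ does not exist for an $H^3$ function and $\partial_n^3\bu$ is merely $L^2$ on each element, hence has no edge trace. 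If $J_1(\bm\eta,\bm\eta)$ and $J_2(\bm\eta,\bm\eta)$ were included in the norm, the quantity $\3bar\bu-\Pi_h^\ast\bu\3bar_h$ would not even be well defined. The paper sidesteps this by design: in Theorem \ref{EnergyEstimate} the ghost penalties are never evaluated at the error $\bu-\Pi_h^\ast\bu$, only at the piecewise-polynomial interpolant $\Pi_h^\ast\bu$ paired with a discrete $\bv_h$, and these terms are bounded in Lemma \ref{WeakConsistency} by splitting $l\le 2$ (where the jumps of $\partial_n^l\bu_{\mathcal E}^i$ vanish, so one may subtract $\bu_{\mathcal E}^i$ and use interpolation) from $l=3,4$ (where inverse estimates are applied directly to the polynomial $\Pi_h\bu_{\mathcal E}^i$). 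Once you delete the $J_1,J_2$ discussion, your argument proves the lemma as stated; but you should internalize this division of labor between Lemma \ref{InterpolationEstimate} and Lemma \ref{WeakConsistency}, because the resolution you anticipated for the $l=3,4$ terms could not have succeeded.
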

\begin{proof}
	Denote $\bxi^i=\bu^i_{\mathcal E}-\Pi_h\bu^i_{\mathcal E}$ for $i=\pm$ and apply Lemma \ref{EstimateElem}, then we have 
	\begin{equation*}
	\|\bxi^i\|+h\|\nabla\times\bxi^i\|+h^2\|\bm{\nabla}\times\nabla\times\bxi^i\|\leq Ch^3\|\bu^i_{\mathcal E}\|_{\bH^3(\Omega)}\leq Ch^3\|\bu^i\|_{3,\Omega_i},\quad i=\pm.
	\end{equation*}
	Denote $\bm \xi=\bu-\Pi_h^{\ast}\bu$, then the fact $\bm \xi=\bm \xi^i$ in $\Omega_i$ yields
	\begin{equation}
	\|\bm{\nabla}\times\nabla\times\bxi\|^2_{\Omega_{-}\cup\Omega_+}+\|\bxi\|^2_{\Omega_{-}\cup\Omega_+}\leq Ch^2\sum_{i=\pm}\|\bu\|^2_{3,\Omega_i}.
	\end{equation}
	We now proceed to estimate the interface terms. Using (\ref{TraceResult}) and Lemma \ref{EstimateElem}, we get
	\begin{equation*}
	\begin{aligned}
	\mathcal{G}_1(\bxi,\bxi)&=\lambda h^{-3}\sum_{K\in\mathcal T_h^{\Gamma}}\|\left[\![\bn\times\bxi\right]\!]\|^2_{\Gamma_K}\leq {C}{h^{-3}}\sum_{K\in\mathcal T_h^{\Gamma}}\sum_{i=\pm}\|\bn\times\bxi^i\|^2_{\Gamma_K}\\
	&\leq {C}{h^{-3}}\sum_{K\in\mathcal T_h^{\Gamma}}\sum_{i=\pm}\left(h^{-1}\|\bxi^i\|^2_{K}+h\|\bxi^i\|^2_{1,K}\right)\\
	&\leq Ch^2\sum_{i=\pm}\|\bu^i\|^2_{3,\Omega_i}.
	\end{aligned}
	\end{equation*}	
	Similarly,
	\begin{equation*}
	\begin{aligned}
	&\mathcal{G}_0(\bxi,\bxi)\leq Ch^2\sum_{i=\pm}\|\bu\|_{3,\Omega_i}^2,\quad\mathcal{G}_2(\bxi,\bxi)\leq Ch^2\sum_{i=\pm}\|\bu\|_{3,\Omega_i}^2,\\
	&\sum_{K\in\mathcal T_h^{\Gamma}}\frac{h^3}{\lambda}\|\{\!\!\{\nabla\times \left(\alpha\bm{\nabla}\times\nabla\times \bxi\right)\}\!\!\}\|^2_{\Gamma_K}\leq Ch^2\sum_{i=\pm}\|\bu\|_{3,\Omega_i},\\
	&\sum_{K\in\mathcal T_h^{\Gamma}}\frac{h}{\lambda}\|\{\!\!\{\bn\times \left(\alpha\bm{\nabla}\times\nabla\times \bxi\right)\}\!\!\}\|^2_{\Gamma_K}\leq Ch^2\sum_{i=\pm}\|\bu\|_{3,\Omega_i}.
	\end{aligned}
	\end{equation*}
	For the term $h^{-2}\|\nabla_h\cdot \bxi\|^2_{\Omega_-\cup\Omega_+}$, we have
	\begin{equation*}
	\begin{aligned}
	h^{-2}\|\nabla_h\cdot \bxi\|^2_{\Omega_-\cup\Omega_+}&\leq Ch^{-2}\sum_{i=\pm}\|\bxi^i\|^2_{1,\Omega_i}\leq Ch^2\sum_{i=\pm}\|\bu\|^2_{3,\Omega_i}.	\end{aligned}
	\end{equation*} 
	The proof is completed by substituting these estimates into the definition of the discrete norm $\3bar\cdot\3bar_h$.
\end{proof}

We now establish the weak Galerkin orthogonality of the scheme \eqref{DisWeak}.
\begin{lemma}\label{Weakorthogonality}
Let $\bu\in \bm H^3(\Omega_-\cup\Omega_+)\cap \bm W$ represent the solution of the quad-curl interface problem (\ref{OriginProblem}), and let $\bu_h$ denote the solution of the numerical scheme (\ref{DisWeak}). Then, the following orthogonality relation holds
\begin{equation}
a_h(\bu-\bu_h,\bv_h)-J_1(\bm u_{h},\bm v_{h})-J_2(\bu_h,\bv_h)=0,\quad \forall \bv_h\in \boldsymbol{V}_h.
\end{equation}
\end{lemma}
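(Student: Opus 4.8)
The plan is to establish the weak Galerkin orthogonality by showing that the exact solution $\bu$ satisfies the discrete variational formulation \eqref{DisWeak} up to the ghost penalty terms $J_1$ and $J_2$, and then subtracting the discrete equation satisfied by $\bu_h$. The starting point is the variational identity \eqref{integrationbyparts} obtained by multiplying the PDE \eqref{PDE} by a test function and integrating by parts twice, but now I must carry this out on the \emph{unfitted} subdomains $\Omega_-\cup\Omega_+$ against a discrete test function $\bv_h\in\bm V_h$, whose normal/tangential traces need not match across $\Gamma$.

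First I would perform the integration by parts of $\int_{\Omega_-\cup\Omega_+}(\alpha\bm\nabla\times\nabla\times\bu)\cdot(\bm\nabla\times\nabla\times\bv_h)\,\d A$ piecewise on each subdomain. This produces the volume term $\int_{\Omega_-\cup\Omega_+}\bf\cdot\bv_h\,\d A - \gamma\int\bu\cdot\bv_h$ together with boundary integrals over $\Gamma$ (the $\partial\Omega$ contributions vanish since $\bn\times\bv_h^+=\nabla\times\bv_h^+=0$ there). The key bookkeeping step is to regroup these $\Gamma$-integrals using the average/jump decomposition \eqref{jump_ave}: the products of $(\cdot)^\pm$ traces split into $\{\!\!\{\cdot\}\!\!\}[\![\cdot]\!]$ and $[\![\cdot]\!]\{\!\!\{\cdot\}\!\!\}^*$ pieces. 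Here I would invoke the regularity $\bu\in\bm H^3(\Omega_-\cup\Omega_+)$, which by the Remarks following the continuous well-posedness theorem guarantees $[\![\bn\times\bu]\!]=[\![\nabla\times\bu]\!]=0$ and $[\![\bn\cdot\bu]\!]=0$ on $\Gamma$, so that all the Nitsche symmetrization terms $\{\!\!\{\bn\times(\alpha\bm\nabla\times\nabla\times\bv_h)\}\!\!\}[\![\nabla\times\bu]\!]$ and $\{\!\!\{\nabla\times(\alpha\bm\nabla\times\nabla\times\bv_h)\}\!\!\}[\![\bn\times\bu]\!]$ vanish, as do $\mathcal G_0,\mathcal G_1,\mathcal G_2$ applied to $\bu$, the discrete-divergence term (since $\nabla\cdot\bu=0$), and the jump terms. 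The surviving $\Gamma$-integrals involving $[\![\bn\times(\alpha\bm\nabla\times\nabla\times\bu)]\!]$ and $[\![\nabla\times(\alpha\bm\nabla\times\nabla\times\bu)]\!]$ are exactly matched to the load functional via the jump conditions \eqref{jumpcon3}--\eqref{jumpcon4}, reproducing the terms $-\int_\Gamma\varphi_3\{\!\!\{\nabla\times\bv_h\}\!\!\}^*\,\d s+\int_\Gamma\varphi_4\{\!\!\{\bn\times\bv_h\}\!\!\}^*\,\d s$ of $\mathcal L_h$.

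Assembling the above shows $a_h(\bu,\bv_h)=\mathcal L_h(\bv_h)$, with the understanding that $J_1(\bu,\bv_h)=J_2(\bu,\bv_h)=0$ because $\bu$ and $\nabla\times\bu$ are globally $\bm H^1$-regular (indeed $\bm H^3$) across the interior edges in $\mathcal E_h^\Gamma$, forcing every jump $[\![\partial_n^l\bu]\!]$ and $[\![\partial_n^l(\nabla\times\bu)]\!]$ to vanish. Subtracting the discrete equation $a_h(\bu_h,\bv_h)+J_1(\bu_h,\bv_h)+J_2(\bu_h,\bv_h)=\mathcal L_h(\bv_h)$ then gives
\begin{equation*}
a_h(\bu-\bu_h,\bv_h)-J_1(\bm u_h,\bm v_h)-J_2(\bm u_h,\bm v_h)=0,
\end{equation*}
which is the claimed relation (the $J_i(\bu,\bv_h)$ terms being absent by the vanishing just noted).

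The main obstacle is the careful tracking of signs and of which average is $\{\!\!\{\cdot\}\!\!\}$ versus $\{\!\!\{\cdot\}\!\!\}^*$ through the substitution \eqref{jump_ave}: the integration-by-parts boundary terms naturally pair a trace of the test function with a trace of the flux, and one must verify that, after using $[\![\bn\times\bu]\!]=[\![\nabla\times\bu]\!]=0$, the only residual contributions are the flux-jump terms fed to $\mathcal L_h$ with precisely the weights $\kappa_2,\kappa_1$ appearing in $\{\!\!\{\cdot\}\!\!\}^*$. This is routine but sign-sensitive; everything else follows from the regularity of $\bu$ and the homogeneity of the relevant jumps.
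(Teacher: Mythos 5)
The paper states this lemma without a proof, so there is no argument to compare against line by line; your consistency-plus-subtraction route is the standard one and its core is correct. Piecewise integration by parts on $\Omega_{\pm}$ against $\bv_h\in\bm V_h$ (the $\partial\Omega$ terms killed by the boundary conditions built into $\bm V_h$), the elementary identity $\xi^-\eta^--\xi^+\eta^+=\{\!\!\{\xi\}\!\!\}[\![\eta]\!]+[\![\xi]\!]\{\!\!\{\eta\}\!\!\}^*$, the homogeneous jumps $[\![\bn\times\bu]\!]=[\![\nabla\times\bu]\!]=[\![\bn\cdot\bu]\!]=0$, the piecewise divergence-free property, and the flux jump conditions \eqref{jumpcon3}--\eqref{jumpcon4} together give exactly $a_h(\bu,\bv_h)=\mathcal{L}_h(\bv_h)$; subtracting the discrete equation \eqref{DisWeak} then yields the stated identity.

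One caveat: your parenthetical claim that $J_1(\bu,\bv_h)=J_2(\bu,\bv_h)=0$ is neither needed nor correct as stated. These quantities are not even well defined for $\bu\in \bm H^3(\Omega_-\cup\Omega_+)$: the ghost penalties involve normal derivatives of order up to four of $\bu$ (hence up to five in $J_2$), whose edge traces do not exist at this regularity, and the pieces $\bu^{\pm}$ live only on $\Omega_{\pm}$ while the penalties act on edges of $\mathcal{E}_h^{\Gamma}$ that extend into the fictitious regions $\Omega_{h,\pm}\setminus\Omega_{\pm}$. This is precisely why the lemma is phrased with $J_1,J_2$ applied only to $\bu_h$, and why the paper later proves only \emph{weak} consistency for $J_i(\Pi_h^{\ast}\bu,\cdot)$ in Lemma \ref{WeakConsistency} rather than exact vanishing. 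Your proof survives intact once this remark is deleted, since the consistency statement $a_h(\bu,\bv_h)=\mathcal{L}_h(\bv_h)$ and the subtraction never involve $J_i(\bu,\cdot)$ at all.
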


The ghost penalty terms are demonstrated to satisfy weak consistency as follows
\begin{lemma}\label{WeakConsistency}
Let $\bu\in \bm H^3(\Omega_-\cup\Omega_+)$. For $\bm v_h\in \bm V_h$, the ghost penalty terms satisfy weak consistency, given by
\begin{equation}\label{ConsistencyResult}
J_1(\Pi_h^{\ast}\bm u,\bm v_{h})+J_2(\Pi_h^{\ast}\bu,\bv_h)\leq Ch\sum_{i=\pm}\|\bu\|_{3,\Omega_i}\3bar\bv_h\3bar_{\ast}.	
\end{equation}
\end{lemma}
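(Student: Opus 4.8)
The plan is to reduce the claim to a purely consistency-type bound on the ghost-penalty seminorms of the interpolant alone, and then to exploit the Sobolev regularity of the extension together with the large powers of $h$ that weight the high-order jump terms.

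First I would apply the Cauchy--Schwarz inequality to each penalty, writing
\[
J_k(\Pi_h^{\ast}\bu,\bv_h)\leq J_k(\Pi_h^{\ast}\bu,\Pi_h^{\ast}\bu)^{1/2}\,J_k(\bv_h,\bv_h)^{1/2},\qquad k=1,2.
\]
Since $J_1(\bv_h,\bv_h)$ and $J_2(\bv_h,\bv_h)$ each enter as nonnegative summands in the definition \eqref{equ_discretenorm} of $\3bar\cdot\3bar_{\ast}$, we have $J_k(\bv_h,\bv_h)^{1/2}\leq\3bar\bv_h\3bar_{\ast}$. Hence it suffices to establish
\[
J_1(\Pi_h^{\ast}\bu,\Pi_h^{\ast}\bu)+J_2(\Pi_h^{\ast}\bu,\Pi_h^{\ast}\bu)\leq Ch^2\sum_{i=\pm}\|\bu\|_{3,\Omega_i}^2 .
\]
Throughout I write $\bxi^i=\bu^i_{\mathcal E}-\Pi_h\bu^i_{\mathcal E}$, so that $\Pi_h^{\ast}\bu=\bu^i_{\mathcal E}-\bxi^i$ on $\Omega_{h,i}$, and I recall from Lemma \ref{ExtenLem} and Lemma \ref{EstimateElem} the (element-wise) interpolation estimates $\|\bxi^i\|_{m}\leq Ch^{3-m}\|\bu^i\|_{3,\Omega_i}$ for $0\le m\le 4$.

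The core of the argument is to estimate, edge by edge, the jumps $\|[\![\partial_n^l\Pi_h^{\ast}\bu]\!]\|_E$ appearing in $J_1$ and $\|[\![\partial_n^l(\nabla\times\Pi_h^{\ast}\bu)]\!]\|_E$ appearing in $J_2$, splitting according to the order $l$. For the low-order terms I would use that the extension is globally smooth: since $\bu^i_{\mathcal E}\in\boldsymbol H^3(\Omega)$, the traces of $\partial_n^l\bu^i_{\mathcal E}$ are single-valued across interior edges for $l\le 2$, so $[\![\partial_n^l\bu^i_{\mathcal E}]\!]=0$ and therefore $[\![\partial_n^l\Pi_h^{\ast}\bu]\!]=-[\![\partial_n^l\bxi^i]\!]$; likewise $\nabla\times\bu^i_{\mathcal E}\in\boldsymbol H^2(\Omega)$ yields $[\![\partial_n^l(\nabla\times\bu^i_{\mathcal E})]\!]=0$ for $l\le 1$. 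Applying the trace inequality \eqref{traceinq2} to $\partial^l\bxi^i$ and then the interpolation estimate, each such term, after the weight $h^{2l-1}$ is accounted for, is of size $Ch^4\|\bu^i\|_{3,\Omega_i}^2$. For the high-order terms (namely $l=3,4$ in $J_1$ and $l=2,3,4$ in $J_2$), where the smooth reference no longer has vanishing jumps, I would bound the jumps directly via the polynomial trace inequality \eqref{traceinq1} and the inverse inequality \eqref{inverseinq}, using that $\Pi_h^{\ast}\bu$ is a piecewise polynomial of degree at most $4$ (so $\nabla\times\Pi_h^{\ast}\bu$ has degree at most $3$ and the $l=4$ term of $J_2$ vanishes identically). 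The essential point is that the large weights $h^{2l-1}$ exactly compensate the negative powers produced by \eqref{traceinq1}--\eqref{inverseinq}, again giving contributions of order $h^2\|\bu^i\|_{3,\Omega_i}^2$.

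Summing the edge contributions over $\mathcal E_h^{\Gamma}$ and over $i=\pm$ --- using that each element belongs to only boundedly many edges, so that the element-wise interpolation estimates can be summed globally --- yields the desired consistency bound, which combined with the Cauchy--Schwarz step completes the proof. I expect the main obstacle to be precisely the high-order jumps $l=3,4$: there one \emph{cannot} invoke vanishing jumps of the smooth function, since $\bu$ only lies in $\boldsymbol H^3$, and the estimate must instead be closed purely by polynomial inverse estimates, with the whole argument hinging on the fact that the ghost-penalty weights $h^{2l-1}$ increase fast enough in $l$ to absorb these terms.
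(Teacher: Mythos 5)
Your proposal is correct and follows essentially the same route as the paper's proof: the same split of the jump orders into low-order terms (where the jumps of the $H^3$ extension $\bu^i_{\mathcal E}$, resp.\ of $\nabla\times\bu^i_{\mathcal E}$, vanish, so one subtracts it and applies the trace inequality \eqref{traceinq2} together with Lemma \ref{EstimateElem}) and high-order terms (handled by the polynomial trace inequality \eqref{traceinq1} and the inverse inequality \eqref{inverseinq}), with $\3bar\bv_h\3bar_{\ast}$ extracted by Cauchy--Schwarz — your only cosmetic difference is doing that Cauchy--Schwarz reduction globally at the outset, reducing to $J_k(\Pi_h^{\ast}\bu,\Pi_h^{\ast}\bu)$, rather than edge by edge as in the paper. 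One small clarification: the high-order terms cannot be closed \emph{purely} by inverse estimates (that would lose too many powers of $h$ and never produce $\|\bu\|_{3,\Omega_i}$); you must stop the inverse estimate at the third-derivative level and then use the triangle inequality $\|\partial^3\Pi_h\bu^i_{\mathcal E}\|_K\leq\|\partial^3(\Pi_h\bu^i_{\mathcal E}-\bu^i_{\mathcal E})\|_K+\|\partial^3\bu^i_{\mathcal E}\|_K$ with Lemma \ref{EstimateElem} at $m=3$, which is exactly the term $\|\Pi_h\bu_{\mathcal E}^i-\bu_{\mathcal E}^i\|_{3,\Omega_{h,i}}+\|\bu_{\mathcal E}^i\|_{3,\Omega_{h,i}}$ appearing in the paper's estimate, and which your stated conclusion implicitly requires.
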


\begin{proof}
Considering $\bm u\in \bm H^3(\Omega_-\cup\Omega_+)$, we have
\begin{align*}
&J_1(\Pi_h^{\ast}\bu,\bv_h)=\sum_{E\in\mathcal{E}_h^{\Gamma}}\sum_{l=0}^{4}\sum_{i=\pm}h^{2l-1}\int_{E}[\![\partial_n^l\Pi_h\bm u_{\mathcal E}^i]\!]\cdot[\![\partial_n^l\bv_h]\!]\d s\\
=&\sum_{E\in\mathcal{E}_h^{\Gamma}}\sum_{i=\pm}\left(\sum_{l=0}^2h^{2l-1}\int_{E}[\![\partial_n^l(\Pi_h\bm u_{\mathcal E}^i-\bm u_{\mathcal E}^i)]\!]\cdot[\![\partial_n^l\bv_h]\!]\d s+\sum_{l=3}^4h^{2l-1}\int_{E}[\![\partial_n^l\Pi_h\bm u_{\mathcal E}^i]\!]\cdot[\![\partial_n^l\bv_h]\!]\d s\right).
\end{align*}
By employing trace inequality \eqref{traceinq1}, inverse inequality \eqref{inverseinq}, and Lemma \ref{EstimateElem}, we obtain the following estimate
\begin{align*}
J_1(\Pi_h^{\ast}\bu,\bv_h)
\leq &\ C \sum_{l=0}^{2}\sum_{i=\pm}h^{l-2}\big(\|\Pi_h\bu_{\mathcal E}^i-\bu_{\mathcal E}^i\|_{l,\Omega_{h,i}}+h \|\Pi_h\bu_{\mathcal E}^i-\bu_{\mathcal E}^i\|_{l+1,\Omega_{h,i}}\big)\3bar\bv_h\3bar_{\ast}\\
& +C \sum_{l=3}^4\sum_{i=\pm}h\big(\|\Pi_h\bu_{\mathcal E}^i-\bu_{\mathcal E}^i\|_{3,\Omega_{h,i}}+ \|\bu_{\mathcal E}^i\|_{3,\Omega_{h,i}}\big)\3bar\bv_h\3bar_{\ast}\\
\leq &\ Ch\sum_{i=\pm}\|\bu\|_{3,\Omega_i}\3bar\bv_h\3bar_{\ast}.
\end{align*}
Similarly, utilizing the fact $[\![\nabla\times\bv_h]\!]=0$ on $E\in \mathcal{E}^\Gamma_h$, we can derive the inequality
\[J_2(\Pi_h^{\ast}\bu,\bv_h)\leq Ch\sum_{i=\pm}\|\bu\|_{3,\Omega_i}\3bar\bv_h\3bar_{\ast}.\]
Combining the above two estimates completes the proof.
\end{proof}

Now, we are prepared to present our main theoretical result
\begin{theorem}\label{EnergyEstimate}
Let $\bu$ and $\bu_h$ be the solutions of (\ref{OriginProblem}) and (\ref{DisWeak}), respectively. Suppose $\bu \in \bH^3(\Omega_-\cup\Omega_+)\cap \bm W$. Then, there exists a constant $C$ independent of $h$ such that
\begin{equation*}
\3bar \bu-\bu_h\3bar_h \leq C h\|\bu\|_{3,\Omega_-\cup \Omega_+}.
\end{equation*}
\end{theorem}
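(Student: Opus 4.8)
The plan is to combine a triangle-inequality splitting with the coercivity of $\mathcal{A}_h$. I would first introduce the discrete error $\be_h := \Pi_h^{\ast}\bu - \bu_h \in \boldsymbol{V}_h$ and split
\[
\3bar \bu-\bu_h\3bar_h \leq \3bar \bu-\Pi_h^{\ast}\bu\3bar_h + \3bar \be_h\3bar_h .
\]
The first term is bounded directly by the interpolation estimate of Lemma~\ref{InterpolationEstimate}, so everything reduces to controlling $\3bar \be_h\3bar_h$ for the discrete function $\be_h$.

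Since $\be_h\in\boldsymbol{V}_h$, I would apply the coercivity \eqref{coercivity} to get $c_1\3bar\be_h\3bar_{\ast}^2 \leq \mathcal{A}_h(\be_h,\be_h)$ and then rewrite the right-hand side. Decomposing $a_h(\be_h,\be_h) = a_h(\Pi_h^{\ast}\bu-\bu,\be_h) + a_h(\bu-\bu_h,\be_h)$ and invoking the weak Galerkin orthogonality of Lemma~\ref{Weakorthogonality} with $\bv_h=\be_h$ to replace $a_h(\bu-\bu_h,\be_h)$ by $J_1(\bu_h,\be_h)+J_2(\bu_h,\be_h)$, the decisive simplification is that these penalty contributions merge with the $J_1(\be_h,\be_h)+J_2(\be_h,\be_h)$ pieces already present in $\mathcal{A}_h(\be_h,\be_h)$. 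Because $\bu_h+\be_h=\Pi_h^{\ast}\bu$ and $J_1,J_2$ are bilinear, they telescope to give
\[
\mathcal{A}_h(\be_h,\be_h) = a_h(\Pi_h^{\ast}\bu-\bu,\be_h) + J_1(\Pi_h^{\ast}\bu,\be_h) + J_2(\Pi_h^{\ast}\bu,\be_h).
\]

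It then remains to bound the three terms on the right. For the first I would use the continuity estimate \eqref{errorestimate}, which yields $a_h(\Pi_h^{\ast}\bu-\bu,\be_h)\leq C\3bar\Pi_h^{\ast}\bu-\bu\3bar_h\,\3bar\be_h\3bar_h$; the two ghost-penalty terms are handled by the weak-consistency estimate of Lemma~\ref{WeakConsistency}. Passing from $\3bar\be_h\3bar_h$ to $\3bar\be_h\3bar_{\ast}$ via the norm comparison \eqref{normrelation}, I would arrive at
\[
c_1\3bar\be_h\3bar_{\ast}^2 \leq C\Big(\3bar\Pi_h^{\ast}\bu-\bu\3bar_h + h\|\bu\|_{3,\Omega_-\cup\Omega_+}\Big)\3bar\be_h\3bar_{\ast},
\]
so that, after cancelling one factor of $\3bar\be_h\3bar_{\ast}$ and inserting Lemma~\ref{InterpolationEstimate} once more, $\3bar\be_h\3bar_{\ast}\leq Ch\|\bu\|_{3,\Omega_-\cup\Omega_+}$. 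Using \eqref{normrelation} again and the opening triangle inequality concludes the estimate.

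I expect the main obstacle to be the bookkeeping in the middle step: one must track the ghost-penalty terms carefully so that, after applying the weak orthogonality, they recombine into $J_i(\Pi_h^{\ast}\bu,\be_h)$ rather than leaving an uncontrolled $J_i(\bu_h,\be_h)$ term. A secondary subtlety is justifying that $a_h(\Pi_h^{\ast}\bu-\bu,\be_h)$ is legitimately controlled by the continuity bound even though $\bu$ is not a discrete function; this is exactly where the hypothesis $\bu\in\bH^3(\Omega_-\cup\Omega_+)$ is used, guaranteeing that the interface trace contributions entering $\3bar\Pi_h^{\ast}\bu-\bu\3bar_h$ are finite and of optimal order.
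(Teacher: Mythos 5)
Your proposal is correct and follows essentially the same route as the paper's own proof: the identical splitting $\bu-\bu_h = (\bu-\Pi_h^{\ast}\bu) + (\Pi_h^{\ast}\bu-\bu_h)$, coercivity \eqref{coercivity} applied to the discrete error, the weak Galerkin orthogonality of Lemma~\ref{Weakorthogonality} merged with the $J_i$ terms of $\mathcal{A}_h$ to produce exactly $a_h(\Pi_h^{\ast}\bu-\bu,\cdot)+J_1(\Pi_h^{\ast}\bu,\cdot)+J_2(\Pi_h^{\ast}\bu,\cdot)$, and then Lemmas~\ref{InterpolationEstimate} and~\ref{WeakConsistency} together with \eqref{normrelation}--\eqref{errorestimate} to close the estimate. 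The telescoping of the ghost-penalty terms and the use of the $\3bar\cdot\3bar_h$-continuity on the nondiscrete difference $\Pi_h^{\ast}\bu-\bu$, which you flag as the delicate points, are handled in precisely the same way in the paper.
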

\begin{proof}
By the triangle inequality and \eqref{normrelation}, we have
\begin{equation}\label{Errortriangle}
\3bar\bu-\bu_h\3bar_h\leq\3bar\bu-\Pi_h^{\ast}\bu\3bar_h+C\3bar\Pi_h^{\ast}\bu-\bu_h\3bar_{\ast}.
\end{equation}
Lemma \ref{InterpolationEstimate} provides an estimate for the first term on the right-hand side above. It suffices to bound $\3bar\Pi_h^{\ast}\bu-\bu_h\3bar_{\ast}$
\begin{align*}
&c_1\3bar\Pi_h^{\ast}\bu-\bu_h\3bar_{\ast}^2
\leq \mathcal{A}_h(\bu_h-\Pi_h^{\ast}\bu,\bu_h-\Pi_h^{\ast}\bu)\\
=&\ a_h(\Pi_h^{\ast}\bu-\bu_h,\Pi_h^{\ast}\bu-\bu_h)+J_1(\Pi_h^{\ast}\bu-\bu_h,\Pi_h^{\ast}\bu-\bu_h)\\
&+J_2\big(\Pi_h^{\ast}\bu-\bu_h,\Pi_h^{\ast}\bu-\bu_h\big)\\
=&\ a_h(\Pi_h^{\ast}\bu-\bu,\Pi_h^{\ast}\bu-\bu_h)+J_1(\Pi_h^{\ast} \bu,\Pi_h^{\ast}\bu-\bu_h)\\
&+J_2\big(\Pi_h^{\ast} \bu,\Pi_h^{\ast}\bu-\bu_h\big)\\
\leq &\ C\3bar\bu-\Pi_h^{\ast}\bu\3bar_h\3bar\Pi_h^{\ast}\bu-\bu_h\3bar_{\ast}+J_1(\Pi_h^{\ast} \bu,\Pi_h^{\ast}\bu-\bu_h)\\
&\ +J_2\big(\Pi_h^{\ast} \bu,\Pi_h^{\ast}\bu-\bu_h\big)\\
\leq &\ Ch\sum_{i=\pm}\|\bu\|_{3,\Omega_i}\3bar\Pi_h^{\ast}\bu-\bu_h\3bar_{\ast},
\end{align*}
which completes the proof.
\end{proof}

\begin{remark}
Given the definition of $\3bar\cdot\3bar_h$, the convergence rate of $\|\nabla_h\cdot \bu\|_{\Omega_{-}\cup\Omega_+}$ is 2, thereby ensuring that the div-free condition is satisfied \end{remark}

\section{Condition number estimate}\label{Sec:conditionnumber}
In this section, following the approach in \cite{Ern2004,Ern2005}, we will provide an estimate for the condition number of the stiffness matrix associated with the finite element formulation \eqref{DisWeak}. Let $\{\bm{\phi}_i\}_{i=1}^N$ be a basis of the finite element space $\bm V_h$. Since the partition $\mathcal{T}_h$ is quasi-uniform, we can obtain the following estimate for $\bm w_h=\sum_{i=1}^NW_i\bm{\phi}_i$
\begin{lemma}\label{Vectortofunc}
There exist constants $C_1$ and $C_2$ such that
\begin{align}
\|\boldsymbol{w}_h\|_{\Omega_{h,-}\cup\Omega_{h,+}}&\leq C_2|W|_N,\label{vector2}\\
\|\boldsymbol{w}_h\|_{\Omega_{h,-}\cup\Omega_{h,+}}+\|\nabla\times\boldsymbol{w}_h\|_{\Omega_{h,-}\cup\Omega_{h,+}}&\geq h C_1|W|_N
\quad \forall \boldsymbol{w}_h\in \boldsymbol{V}_h,\label{vector1}
\end{align}
where $W=(W_1,...,W_N)^{\mathrm T}$, and $|W|_N$ denotes the Euclidean norm induced by the inner product $(W,V)_N=\sum_{i=1}^{N}W_iV_i$.
\end{lemma}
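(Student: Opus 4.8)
The plan is to establish both inequalities by the classical scaling technique: localize each global quantity to element contributions, pass to a fixed reference element $\widehat K$ on which $\mathcal{S}_h(\widehat K)$ is finite-dimensional (so that all norms are equivalent), and then track the powers of $h$ generated by the affine pullback, using the quasi-uniformity and shape-regularity of $\mathcal{T}_h$. Concretely, I would write $\|\boldsymbol{w}_h\|^2_{\Omega_{h,-}\cup\Omega_{h,+}}=\sum_K\|\boldsymbol{w}_h\|^2_K$ and $|W|_N^2=\sum_i|W_i|^2$, and note that by shape-regularity together with Assumption \ref{Assump3} each degree of freedom is shared by only a bounded number of elements. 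It therefore suffices to compare $\|\boldsymbol{w}_h\|_K$ with the subvector $W^{(K)}$ of degrees of freedom supported on $K$, and then reassemble with a finite-overlap constant; on interface elements the two copies $\boldsymbol{w}_h^{\pm}$ of the doubled degrees of freedom are simply treated separately before summing.

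The essential point, which distinguishes this element from the Lagrange case, is that the three families of degrees of freedom in \eqref{DOF1}--\eqref{DOF3} scale differently. The vertex functionals $(\nabla\times\boldsymbol{w}_h)(v_i)$ are \emph{point values of the scalar curl}: since $\nabla\times\boldsymbol{w}_h$ is a polynomial on $K$, the inverse estimate $\|p\|_{L^\infty(K)}\le Ch^{-1}\|p\|_K$ yields $|(\nabla\times\boldsymbol{w}_h)(v_i)|\le Ch^{-1}\|\nabla\times\boldsymbol{w}_h\|_K$. The edge moments are controlled by the trace inequality \eqref{traceinq1} via $\big|\int_{e_i}\boldsymbol{w}_h\cdot\boldsymbol{\tau}_i\,q\,ds\big|\le C\|\boldsymbol{w}_h\|_K$, and the interior moment by Cauchy--Schwarz, $\big|\int_K\boldsymbol{w}_h\cdot\boldsymbol{x}\,dx\big|\le C\|\boldsymbol{w}_h\|_K$. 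Hence every entry of $W^{(K)}$ obeys $|W_i|\le Ch^{-1}\big(\|\boldsymbol{w}_h\|_K+\|\nabla\times\boldsymbol{w}_h\|_K\big)$, with the vertex functionals realizing the dominant $h^{-1}$ scaling. Summing over $K$ and invoking the finite overlap gives $|W|_N\le Ch^{-1}\big(\|\boldsymbol{w}_h\|_{\Omega_{h,-}\cup\Omega_{h,+}}+\|\nabla\times\boldsymbol{w}_h\|_{\Omega_{h,-}\cup\Omega_{h,+}}\big)$, which is exactly \eqref{vector1}; the factor $h$ and the appearance of the curl term are both forced by the vertex degrees of freedom, and this is precisely where the analysis departs from the Lagrange estimate.

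For \eqref{vector2} I would argue in the reverse direction. On $\widehat K$ the coordinate isomorphism $\widehat{\boldsymbol{w}}\mapsto(\widehat N_i(\widehat{\boldsymbol{w}}))_i$ between $\mathcal{S}_h(\widehat K)$ and its degree-of-freedom vector is a norm equivalence, so $\|\widehat{\boldsymbol{w}}\|_{\widehat K}\le C\big(\sum_i|\widehat N_i(\widehat{\boldsymbol{w}})|^2\big)^{1/2}$; transporting this through the affine pullback and collecting the resulting powers of $h$ produces $\|\boldsymbol{w}_h\|_K\le C|W^{(K)}|_N$, and squaring and summing over all elements yields \eqref{vector2}. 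The main obstacle is exactly the heterogeneous scaling of the three degree-of-freedom families: unlike for Lagrange elements a single power of $h$ cannot serve both bounds simultaneously, so one must verify that the vertex (curl) functionals are the binding ones in \eqref{vector1} while not degrading \eqref{vector2}. A secondary, but genuine, technical requirement is to keep all constants independent of how $\Gamma$ cuts the interface elements; this is where the cut-robust trace estimates of Lemma \ref{traceineq}, rather than the ordinary ones, must be used, and where the finite-overlap property has to be applied with the doubled degrees of freedom in mind.
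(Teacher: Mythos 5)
Your proposal is correct and follows essentially the same route as the paper: for \eqref{vector1} the paper likewise bounds each degree-of-freedom family separately (edge moments via Cauchy--Schwarz and the trace inequality, the vertex values of $\nabla\times\boldsymbol{w}_h$ via the $L^\infty$ inverse estimate producing the $h^{-1}$ factor and the curl term, the interior moment via Cauchy--Schwarz) and sums over elements, while for \eqref{vector2} it expands $\boldsymbol{w}_h=\sum_i W_i\boldsymbol{\phi}_i$ and bounds the basis functions by pulling back to the reference element, which is the same scaling argument as your degree-of-freedom norm equivalence on $\widehat K$. The only cosmetic difference is that the paper phrases the \eqref{vector2} step through the nodal basis rather than through the DOF functionals, so no substantive comparison is needed.
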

\begin{proof}
	We usually adopt the following transformation, $\bu\circ F_K = B_K^{-T}\hat{\bu}$, where the affine mapping $F_K(x) = B_K\hat{\boldsymbol{x}}+\boldsymbol{b}_K$. For a fixed element $K$, according to the Cauchy-Schwarz inequality, we have
\begin{align*}
	\|\bw\|^2_{K} &= \int_{K}(\sum_{i}^{13}W_i\phi_i)^2 \d \bm x=  \int_{K}(\sum_{i}^{13}W_i\phi_i)^2 \d \bm x  \leq\sum_{i=1}^{13}W_i^2 \sum_{i=1}^{13}\int_{K}\phi_i^2\d \bm x \nonumber\\
	&\leq \sum_{i=1}^{13}W_i^2 \sum_{i=1}^{13}\int_{\hat{K}}(B_K^{-T})^2\hat{\phi}_i^2|B_K| \d \hat{\boldsymbol{x}} \leq C\sum_{i=1}^{13}W_i^2.
	\end{align*}
Summing
 up the result for all elements $K\in\mathcal{T}_h$ completes the proof of the first inequality.
Noting that the relationship between $W$ and freedom of degree, i.e., $\left(W_1,W_2,...,W_{13}\right) = \left(\int_{e_1}\bw_h\cdot\bm \tau_1\d s, \int_{e_2}\bw_h\cdot\bm \tau_2\d s,..., \int_{K}\bw_h\cdot\bm x \d x \right)$, we have
\begin{align*}
	(\int_e \bw_h\cdot\bm \tau_i q\d s)^2 &\leq \int_e q^2\d s\int_e (\bw_h\cdot\bm \tau_i)^2\d s\leq C\|\bw_h\|^2_K\leq Ch^{-2}\|\bw_h\|^2_K,\quad \forall q\in P_2(K),\\
	(\nabla\times \bw_h(v_i))^2 &\leq \|\nabla\times\bw_h\|^2_{L^{\infty}(K)}\leq Ch^{-2}\|\nabla\times\bw_h\|^2_{K},\\
	(\int_{K}\bw_h\cdot\bm x \d \bm x)^2&\leq \int_{K}\bw_h^2\d \bm x\int_{K} \bm x^2 \d \bm x\leq C\|\bw_h\|_K^2\leq Ch^{-2}\|\bw_h\|^2_K.
\end{align*}
Summing up the results for all elements $K\in\mathcal{T}_h$ concludes the
proof of the second inequality.
\end{proof}
For a matrix $A$, we define the operator norm as
\begin{equation*}
\|A\|=\sup_{W\in\mathbb{R}^N}\frac{|AW|_N}{|W|_N}.
\end{equation*}
Furthermore, the condition number of matrix $A$ is defined by
\begin{equation}
\mathcal{K}(A):=\|A\|\|A^{-1}\|.
\end{equation}

The following inverse estimate and a Poincar\'e inequality for the appropriate norms are the key points for estimating the condition number.
\begin{lemma}\label{Disinverse}
For all $\bv_h\in\boldsymbol{V}_h$, there exists a constant $C_I>0$ such that
\begin{equation}
\3bar \bv_h\3bar_{\ast} \leq C_Ih^{-2}\|\bv_h\|_{\Omega_{h,-}\cup\Omega_{h,+}}.
\end{equation}
\end{lemma}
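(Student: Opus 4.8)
The plan is to bound each term in the definition \eqref{equ_discretenorm} of $\3bar\cdot\3bar_\ast^2$ separately by $Ch^{-4}\|\bv_h\|^2_{\Omega_{h,-}\cup\Omega_{h,+}}$ and then take square roots. The only ingredients are that every component of $\bv_h$ is a piecewise polynomial of a fixed degree (at most four, since $\mathfrak p B$ has degree four), so the element inverse inequality \eqref{inverseinq} may be iterated, together with the trace inequalities \eqref{TraceResult} and \eqref{traceinq1}, each of which trades a norm on $\Gamma_K$ or an edge $E$ for an element volume norm at the cost of a factor $h^{-1}$. Throughout I would use the inclusions $\Omega_j\subseteq\Omega_{h,j}$ for $j=\pm$, so that every element volume norm arising is controlled by $\|\bv_h\|_{\Omega_{h,-}\cup\Omega_{h,+}}$.

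For the two volume terms I would apply \eqref{inverseinq} directly: iterating it twice gives $\|\bm\nabla\times\nabla\times\bv_h\|_K\le Ch^{-2}\|\bv_h\|_K$, hence $\|\bm\nabla\times\nabla\times\bv_h\|^2_{\Omega_{h,-}\cup\Omega_{h,+}}\le Ch^{-4}\|\bv_h\|^2_{\Omega_{h,-}\cup\Omega_{h,+}}$, while a single application yields $h^{-2}\|\nabla_h\cdot\bv_h\|^2_{\Omega_-\cup\Omega_+}\le Ch^{-4}\|\bv_h\|^2_{\Omega_{h,-}\cup\Omega_{h,+}}$. The term $\|\bv_h\|^2_{\Omega_{h,-}\cup\Omega_{h,+}}$ is already in the desired form.

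For the interface penalties $\mathcal G_0,\mathcal G_1,\mathcal G_2$ I would first pass from $\Gamma_K$ or $E$ to the element $K$ via \eqref{TraceResult} (resp.\ \eqref{traceinq1}) and then invoke \eqref{inverseinq} to absorb any curl. For example $h^{-3}\|[\![\bn\times\bv_h]\!]\|^2_{\Gamma_K}\le Ch^{-4}\|\bv_h\|^2_K$ and $h^{-1}\|[\![\nabla\times\bv_h]\!]\|^2_{\Gamma_K}\le Ch^{-1}\cdot h^{-1}\cdot h^{-2}\|\bv_h\|^2_K=Ch^{-4}\|\bv_h\|^2_K$, with the $\mathcal G_0$ contributions treated identically. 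The heart of the bookkeeping is the ghost penalty $J_1,J_2$: for the $l$-th term of $J_1$, iterating \eqref{inverseinq} $l$ times and then using \eqref{traceinq1} gives $\|\partial_n^l\bv_h\|^2_E\le Ch^{-1-2l}\|\bv_h\|^2_K$, so the weight $h^{2l-1}$ yields exactly $Ch^{-2}\|\bv_h\|^2_K$, uniformly for $0\le l\le 4$; for $J_2$ the extra curl supplies one further factor $h^{-2}$, producing $Ch^{-4}\|\bv_h\|^2_K$ per term.

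Summing these elementwise bounds over $\mathcal T_h^\Gamma$ and $\mathcal E_h^\Gamma$, and using that the number of edges meeting a given element is uniformly bounded, yields $\3bar\bv_h\3bar^2_\ast\le Ch^{-4}\|\bv_h\|^2_{\Omega_{h,-}\cup\Omega_{h,+}}$, whence the claim follows by taking square roots. I expect the only point needing attention to be the verification that $h^{-4}$ (equivalently $h^{-2}$ in the norm) is genuinely the worst scaling — in particular that the high-order derivative contributions $l=3,4$ in $J_1$ and $J_2$ do not generate a stronger negative power of $h$. The computation above shows they do not, precisely because the weights $h^{2l-1}$ are designed to offset the $h^{-2l}$ produced by the repeated inverse inequalities.
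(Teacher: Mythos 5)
Your proposal is correct and follows exactly the route the paper takes: the paper's own proof is a one-sentence appeal to the trace inequalities \eqref{TraceResult}, \eqref{traceinq1} and the inverse inequality \eqref{inverseinq}, and your term-by-term bookkeeping (including the observation that the weights $h^{2l-1}$ in $J_1$, $J_2$ exactly offset the $h^{-2l}$ from iterated inverse estimates, leaving $h^{-4}$ as the worst scaling) is precisely the computation the paper leaves implicit. No gaps; your write-up is simply a fully detailed version of the paper's argument.
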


\begin{proof}
The above inequality is derived from trace inequalities (\ref{TraceResult}) and \eqref{traceinq1}, and the inverse inequality \eqref{inverseinq}.
\end{proof}

\begin{lemma}\label{Dispoin}
For all $\bv_h\in\boldsymbol{V}_h$, it holds
\begin{equation}
\3bar \bv_h\3bar_{\ast}\geq C_{P}(\|\bu_h\|_{\Omega_{h,-}\cup\Omega_{h,+}}+\|\nabla\times\bv_h\|_{\Omega_{h,-}\cup\Omega_{h,+}}).
\end{equation}
\end{lemma}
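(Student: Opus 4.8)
The plan is to read the bound almost directly off the definition \eqref{equ_discretenorm} of $\3bar\cdot\3bar_{\ast}$, using the ghost-penalty stability of Lemma~\ref{ghostestimate} and the Poincar\'e inequality \eqref{curlv-poincare} of Lemma~\ref{poincare} to supply the one missing piece. The term $\|\bv_h\|_{\Omega_{h,-}\cup\Omega_{h,+}}$ is literally one of the summands of $\3bar\bv_h\3bar_{\ast}^2$, so it is controlled at once; the entire content of the lemma is therefore to bound $\|\nabla\times\bv_h\|_{\Omega_{h,-}\cup\Omega_{h,+}}$ by $C\3bar\bv_h\3bar_{\ast}$.

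To handle the curl term I would first pass from the fictitious domains $\Omega_{h,\pm}$ to the physical domains $\Omega_{\pm}$ by the ghost-penalty estimate \eqref{ghostcurlu}, which gives $\|\nabla\times\bv_h\|^2_{\Omega_{h,-}\cup\Omega_{h,+}}\le C\big(\|\nabla\times\bv_h\|^2_{\Omega_{-}\cup\Omega_{+}}+h^2 J_2(\bv_h,\bv_h)\big)$. Since the mesh satisfies $h\le 1$, the factor $h^2$ may be dropped, and $J_2(\bv_h,\bv_h)$ is itself a summand of $\3bar\bv_h\3bar_{\ast}^2$. Next I would estimate the remaining quantity $\|\nabla\times\bv_h\|_{\Omega_{-}\cup\Omega_{+}}$ by the Poincar\'e inequality \eqref{curlv-poincare}, obtaining $\|\nabla\times\bv_h\|^2_{\Omega_{-}\cup\Omega_{+}}\le C\big(\|\bm{\nabla}\times\nabla\times\bv_h\|^2_{\Omega_{-}\cup\Omega_{+}}+\mathcal{G}_2(\bv_h,\bv_h)\big)$. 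Both terms on the right are dominated by $\3bar\bv_h\3bar_{\ast}^2$: the curl-curl term because $\Omega_{\pm}\subset\Omega_{h,\pm}$, so that $\|\bm{\nabla}\times\nabla\times\bv_h\|_{\Omega_{-}\cup\Omega_{+}}\le\|\bm{\nabla}\times\nabla\times\bv_h\|_{\Omega_{h,-}\cup\Omega_{h,+}}$, which is the first summand of the norm, and $\mathcal{G}_2(\bv_h,\bv_h)$ because it appears in $\sum_{i=0}^{2}\mathcal{G}_i(\bv_h,\bv_h)$.

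Chaining these three estimates yields $\|\bv_h\|_{\Omega_{h,-}\cup\Omega_{h,+}}+\|\nabla\times\bv_h\|_{\Omega_{h,-}\cup\Omega_{h,+}}\le C\3bar\bv_h\3bar_{\ast}$, and taking $C_P=1/C$ proves the claim. I do not expect any genuine obstacle here: the difficulty is purely in bookkeeping the domains, since the Poincar\'e inequality of Lemma~\ref{poincare} is stated over the physical subdomains $\Omega_{\pm}$, whereas both the norm and the target quantity live on the enlarged sets $\Omega_{h,\pm}$. The role of the ghost penalty $J_2$ is precisely to bridge this mismatch, so the only point requiring care is to invoke \eqref{ghostcurlu} \emph{before}, and \eqref{curlv-poincare} \emph{after}, restricting to $\Omega_{\pm}$, and to use $h\le 1$ to absorb the spurious $h^2$ factor.
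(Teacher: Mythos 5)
Your proposal is correct, and for the substantive half of the lemma --- the curl term --- it is exactly the paper's argument: apply the ghost-penalty stability estimate \eqref{ghostcurlu} to pass from $\Omega_{h,\pm}$ to $\Omega_{\pm}$, absorb $h^2 J_2(\bv_h,\bv_h)$ into the norm using $h\leq 1$, then apply the Poincar\'e inequality \eqref{curlv-poincare} and the inclusion $\Omega_{\pm}\subset\Omega_{h,\pm}$. Where you diverge is the $L^2$ term: you observe, correctly, that $\|\bv_h\|^2_{\Omega_{h,-}\cup\Omega_{h,+}}$ is itself a summand of $\3bar\bv_h\3bar^2_{\ast}$, so nothing needs to be proved. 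The paper instead controls $\|\bv_h\|_{\Omega_{h,-}\cup\Omega_{h,+}}$ the long way, chaining \eqref{ghostu} with the second Poincar\'e inequality \eqref{v-poincare}, so that the $L^2$ norm is bounded by the curl-curl term, the divergence term, the ghost penalties $\mathcal G_i$, and $J_1$ \emph{without ever invoking the $L^2$ summand of the norm}. Your shortcut is perfectly valid for the lemma as stated and is cleaner; what the paper's redundant-looking detour buys is the structural information behind its later remark (after Theorem \ref{conditionnumber}) that the divergence stabilization $h^{-2}\int(\nabla_h\cdot\bu_h)(\nabla_h\cdot\bv_h)\,\d A$ and $\mathcal G_0$ are what improve the constant $C_P$, and hence the condition number --- a dependence that is invisible if one simply cites the $L^2$ summand. (Incidentally, the $\|\bu_h\|$ on the right-hand side of the statement is a typo for $\|\bv_h\|$, as your reading implicitly assumes.)
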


\begin{proof}
It follows from Lemma \ref{ghostestimate} and Lemma \ref{poincare} that
\begin{align*}
\|\nabla\times\bv_h\|_{\Omega_{h,-}\cup\Omega_{h,+}}^2
&\leq C\Big(\|\nabla\times\bv_h\|^2_{\Omega_-\cup\Omega_+}+h^2J_2(\bm v_h,\bm v_h)\Big)\nonumber\\
&\leq C\Big(\|\bm{\nabla}\times\nabla\times\bv_h\|^2_{\Omega_-\cup\Omega_+}+\mathcal{G}_2(\bv_h,\bv_h)+h^2J_2(\bm v_h,\bm v_h)\Big),
\end{align*}
and 
\begin{align*}
	C\|\bv_h\|_{\Omega_{h,-}\cup\Omega_{h,+}}\leq \|\bm{\nabla}\times\nabla\times\bv_h\|^2_{\Omega_-\cup\Omega_+}+\|\nabla_h\cdot\bv_h\|^2_{\Omega_-\cup\Omega_+}+\sum_{i=0}^3\mathcal G_i(\bm v_h,\bm v_h)+h^2J_1(\bm v_h,\bm v_h).
\end{align*}
Recalling that the fact $\|\bm\nabla\times\nabla\times\bm v_h\|_{\Omega_{-}\cup\Omega_{+}}\leq \|\bm\nabla\times\nabla\times\bm v_h\|_{\Omega_{h,-}\cup\Omega_{h,+}}$ and the definition of $\3bar\cdot\3bar_{\ast}$, we have
\begin{equation*}
\3bar\bu_h\3bar_{\ast}\geq C_p(\|\bv_h\|_{\Omega_{h,-}\cup\Omega_{h,+}}+\|\nabla\times\bv_h\|_{\Omega_{h,-}\cup\Omega_{h,+}}).
\end{equation*}
which completes the proof.
\end{proof}

Now, we are in a position to present the main result of this section.
\begin{theorem}\label{conditionnumber}
The condition number of the stiffness matrix $A$ satisfies the following estimate
\begin{equation}
\mathcal{K}(A)\leq \frac{c_2C_2^2C_I^2}{c_1C_1^2C_P^2}h^{-6}.
\end{equation}
\end{theorem}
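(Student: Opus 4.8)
The plan is to bound the condition number $\mathcal{K}(A)=\|A\|\,\|A^{-1}\|$ by separately estimating $\|A\|$ and $\|A^{-1}\|$ in terms of the powers of $h$ and the constants appearing in the preceding lemmas. I would first relate the matrix action to the bilinear form $\mathcal{A}_h$ via $(AW,V)_N=\mathcal{A}_h(\bm w_h,\bm v_h)$, where $\bm w_h=\sum_i W_i\bm\phi_i$ and $\bm v_h=\sum_i V_i\bm\phi_i$. This identity is the bridge that converts the algebraic quantities $|AW|_N$ into analytic quantities expressed through $\3bar\cdot\3bar_{\ast}$.

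For the upper bound on $\|A\|$, I would chain together the continuity of $\mathcal{A}_h$ (inequality \eqref{continuity} with constant $c_2$), the discrete inverse estimate of Lemma \ref{Disinverse} (constant $C_I$, giving $\3bar\bv_h\3bar_{\ast}\leq C_I h^{-2}\|\bv_h\|_{\Omega_{h,-}\cup\Omega_{h,+}}$), and the upper bound in Lemma \ref{Vectortofunc} (inequality \eqref{vector2}, constant $C_2$). Concretely, starting from $(AW,V)_N=\mathcal{A}_h(\bm w_h,\bm v_h)\leq c_2\3bar\bm w_h\3bar_{\ast}\3bar\bm v_h\3bar_{\ast}$, I apply Lemma \ref{Disinverse} to both factors and then \eqref{vector2} to turn the $L^2$-norms into $|W|_N$ and $|V|_N$. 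Taking the supremum over $V$ and then over $W$ yields $\|A\|\leq c_2 C_I^2 C_2^2 h^{-4}$.

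For the lower bound, equivalently an upper bound on $\|A^{-1}\|$, I would use coercivity \eqref{coercivity} (constant $c_1$) together with the Poincar\'e-type inequality of Lemma \ref{Dispoin} (constant $C_P$) and the lower bound \eqref{vector1} of Lemma \ref{Vectortofunc} (constant $C_1$, which carries the extra factor of $h$). The chain is $(AW,W)_N=\mathcal{A}_h(\bm w_h,\bm w_h)\geq c_1\3bar\bm w_h\3bar_{\ast}^2\geq c_1 C_P^2(\|\bm w_h\|_{\Omega_{h,-}\cup\Omega_{h,+}}+\|\nabla\times\bm w_h\|_{\Omega_{h,-}\cup\Omega_{h,+}})^2\geq c_1 C_P^2 C_1^2 h^2|W|_N^2$, where the last step invokes \eqref{vector1}. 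Since $(AW,W)_N\leq|AW|_N|W|_N$, this gives $|AW|_N\geq c_1 C_P^2 C_1^2 h^2|W|_N$, hence $\|A^{-1}\|\leq (c_1 C_P^2 C_1^2)^{-1} h^{-2}$. Multiplying the two bounds produces $\mathcal{K}(A)\leq \frac{c_2 C_2^2 C_I^2}{c_1 C_1^2 C_P^2}h^{-6}$, matching the claim.

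The bookkeeping of the $h$-powers is the place where errors are most likely to creep in: the inverse estimate contributes $h^{-2}$ twice in the upper bound, while the lower bound loses an $h$ through \eqref{vector1} and the coercivity-Poincar\'e chain does not introduce further $h$-powers on its own. I would double-check that the $h^{-4}$ from $\|A\|$ combines with $h^{-2}$ from $\|A^{-1}\|$ to give exactly $h^{-6}$, and confirm that the coercivity and continuity constants $c_1,c_2$ (valid for $\lambda$ sufficiently large) are the same ones appearing in the final constant. The main obstacle is therefore not conceptual but the careful tracking of which lemma supplies which factor of $h$ and ensuring the two one-sided estimates are assembled consistently; everything else follows directly from Lemmas \ref{Vectortofunc}, \ref{CoreCon}, \ref{Disinverse}, and \ref{Dispoin}.
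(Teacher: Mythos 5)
Your proposal is correct and follows essentially the same route as the paper's own proof: relating $(AW,V)_N$ to $\mathcal{A}_h(\bm w_h,\bm v_h)$, bounding $\|A\|\leq c_2C_2^2C_I^2h^{-4}$ via continuity, Lemma \ref{Disinverse}, and \eqref{vector2}, and bounding $\|A^{-1}\|\leq (c_1C_P^2C_1^2)^{-1}h^{-2}$ via coercivity, Lemma \ref{Dispoin}, and \eqref{vector1}. The assembly of the two one-sided estimates and the bookkeeping of the $h$-powers match the paper exactly.
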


\begin{proof}
By the definition, we have
\begin{equation}\label{defAW}
|AW|_N=\sup_{V\in\mathbb{R}^N}\frac{(AW,V)_N}{|V|_N}=\sup_{V\in\mathbb{R}^N}\frac{\mathcal{A}_h(\boldsymbol{w}_h,\bv_h)}{|V|_N}.
\end{equation}
Using \eqref{continuity} and Lemma \ref{Disinverse}, we obtain
\begin{equation}\label{normA}
\begin{aligned}
\mathcal{A}_h(\boldsymbol{w}_h,\bv_h)\leq c_2\3bar \boldsymbol{w}_h\3bar_{\ast}\3bar \bv_h\3bar_{\ast}&\leq c_2C_I^2h^{-4}\|\boldsymbol{w}_h\|_{\Omega_{h,-}\cup\Omega_{h,+}}\|\bv_h\|_{\Omega_{h,-}\cup\Omega_{h,+}}\\
&\leq c_2 C^2_2C_I^2h^{-4}|W|_N|V|_N,
\end{aligned}
\end{equation}
which implies $\|A\|=\sup_{W\in\mathbb{R}^N}\frac{|AW|_N}{|W|_N}\leq c_2 C^2_2C_I^2h^{-4}$.

Applying \eqref{coercivity} and Lemma \ref{Dispoin}, we get
\begin{equation}\label{normA1}
\begin{aligned}
\mathcal{A}_h(\boldsymbol{w}_h,\boldsymbol{w}_h)\geq c_1\3bar \boldsymbol{w}_h\3bar_{\ast}\3bar \bw_h\3bar_{\ast}&\geq{c_1C_P^2}\left(\|\boldsymbol{w}_h\|_{\Omega_{h,-}\cup\Omega_{h,+}}+\|\nabla\times\bw_h\|_{\Omega_{h,-}\cup\Omega_{h,+}} \right)^2\\
&\geq {c_1C_P^2C_1^2}h^2|W|_N|W|_N,
\end{aligned}
\end{equation}
which, together with \eqref{defAW}, leads to
\begin{equation}
\begin{aligned}
|AW|_N\geq {c_1C_P^2C_1^2}h^2|W|_N.
\end{aligned}
\end{equation}
Taking $W=A^{-1}Y$ yields
\begin{equation}
\|A^{-1}\|\leq \frac{1}{c_1C_P^2C_1^2}h^{-2}.
\end{equation}
From the definition of $\mathcal K(A)$, we can obtain the desired result
\[\mathcal K(A)\leq \frac{c_2C_2^2C_I^2}{c_1C_1^2C_P^2}h^{-6}.\]
\end{proof}

\begin{remark}
From the proof of Lemma \ref{Dispoin}, the terms $h^{-2}\int_{\Omega_-\cup\Omega_+}(\nabla_h\cdot \bu_h) (\nabla_h\cdot \bv_h) \d A$ and $\mathcal G_0$ can improve the constant $C_P$, and hence, reduce $\mathcal K(A)$ from Theorem \ref{conditionnumber}.
\end{remark}

\begin{remark}
	It is worth mentioning that the condition number of the stiffness matrix is determined by the degrees of freedom of the curlcurl conforming elements and the quad-curl problem itself. When addressing the quad-curl problem rather than the quad-curl interface problem, the condition number of the matrix remains on the order of $O(h^{-6})$.
\end{remark}

\section{Numerical experiments}\label{Sec:Numerical}
In this section, we conduct two-dimensional numerical tests to validate the convergence rate of the proposed scheme. In the subsequent examples, we adopt the following values of $\kappa_1$ and $\kappa_2$ for an interface element $K$,
\begin{align*}
	\kappa_1 = \frac{\alpha_+|K_-|}{\alpha_+|K_-|+\alpha_-|K_+|},\quad \kappa_2 = \frac{\alpha_-|K_+|}{\alpha_+|K_-|+\alpha_-|K_+|},
\end{align*}
where $K_i = \Omega_i\cap K$ and $|K_i|$ is the area of $K_i$. For simplicity, we introduce the notation $\|\cdot\|_h = \sqrt{\sum_{i=\pm}\|\cdot\|_{\Omega_{i}}^2}$.

	\begin{figure}[h]
		\centering
		\subfigure{
			\begin{minipage}[t]{0.41\linewidth}
				\centering
				\includegraphics[width=0.87\textwidth]{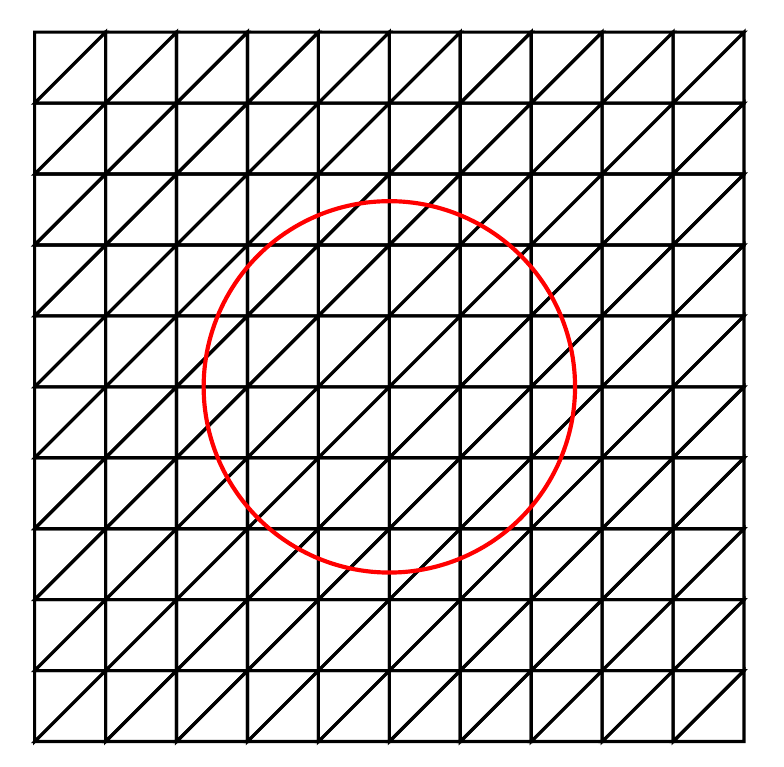}
			\end{minipage}
		}
		\subfigure{
			\begin{minipage}[t]{0.42\linewidth}
				\centering
				\includegraphics[width=0.87\textwidth]{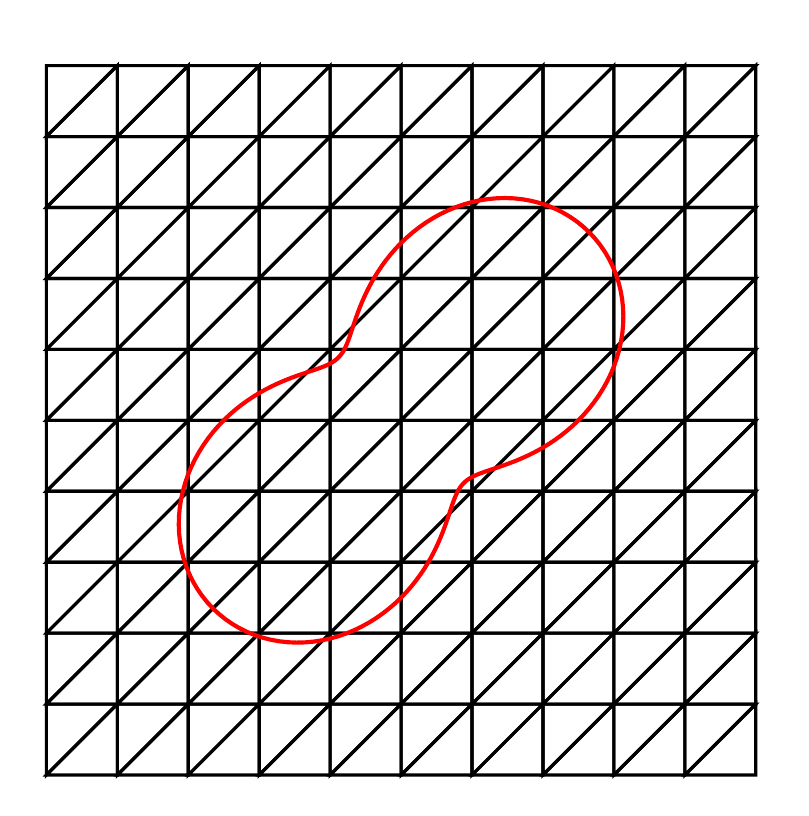}
			\end{minipage}
		}
		\caption{The circle-shape interface (left) and the peanut-shape interface (right).}
		\label{InterfaceShape}
	\end{figure}

	\begin{table}[htb]\centering\small
		\renewcommand\arraystretch{1.2}
		\caption{Numerical results for example 1 when  $\alpha_-=\alpha_+=1$, $\gamma=1$.}
		\setlength{\tabcolsep}{1.3mm}{
			\begin{tabular}{ccccccccc}
				\hline
				$h$&      $\|\boldsymbol{e}_h\|_{h}$&  Rates& $\|\nabla\times\boldsymbol{e}_h\|_{h}$&      Rates&   $\|(\nabla\times)^2\boldsymbol{e}_h\|_{h}$&Rates&$\|\nabla_h\cdot\bm e_h\|_{h}$ &Rates\\ 
				\hline
				20& 7.4869e-01&    --&  8.1887e+00&     --&   1.2171e+02&       --& 8.4885e-01&--\\
				40& 1.8048e-01& 2.05&  2.0853e+00&  1.97&  5.4720e+01&   1.15& 2.5086e-01&1.76 \\
				80& 3.6680e-02& 2.30&  4.2248e-01&  2.30&  2.5819e+01&    1.08& 6.6711e-02& 1.91\\
				160& 7.6040e-03& 2.27&  8.4446e-02&  2.32&  1.2710e+01&   1.02&1.6919e-03 &1.98\\
				\hline
			\end{tabular}
		}
		\label{Table_con1}
		\vspace{12pt}
	\end{table}
	\begin{table}[htb]\centering\small
		\renewcommand\arraystretch{1.2}
		\caption{Numerical results for example 1 when  $\alpha_-=1,\ \alpha_+=2$, $\gamma=1$.}
		\setlength{\tabcolsep}{1.3mm}{
			\begin{tabular}{ccccccccc}
				\hline
				$h$&      $\|\boldsymbol{e}_h\|_{h}$&  Rates& $\|\nabla\times\boldsymbol{e}_h\|_{h}$&      Rates&   $\|(\nabla\times)^2\boldsymbol{e}_h\|_{h}$&Rates&$\|\nabla_h\cdot\bm e_h\|_{h}$ &Rates\\ 
				\hline
				20& 6.8571e-01&    --&  7.0294e+00&     --&   1.1261e+02&       --& 8.8463e+00&--\\
				40& 1.6138e-01& 2.04&  1.7147e+00&  1.99&  5.2760e+01&   0.99&  2.5118e-01&2.01 \\
				80& 3.3756e-02& 2.13&  3.5794e-01&  2.12&  2.5593e+01&    1.01&  6.6357e-02& 1.96\\
				160& 7.3717e-03& 2.08&  7.6084e-02&  2.10&  1.2683e+01&   1.01&1.6913e-02 &1.97\\
				\hline
			\end{tabular}
		}
		\label{Table_con2}
		\vspace{12pt}
	\end{table}
	\begin{table}[htb]\centering\small
		\renewcommand\arraystretch{1.2}
		\caption{Numerical results for example 1  when  $\alpha_-=1,\ \alpha_+=10$, $\gamma=1$.}
		\setlength{\tabcolsep}{1.3mm}{
			\begin{tabular}{ccccccccc}
				\hline
			$h$&      $\|\boldsymbol{e}_h\|_{h}$&  Rates& $\|\nabla\times\boldsymbol{e}_h\|_{h}$&      Rates&   $\|(\nabla\times)^2\boldsymbol{e}_h\|_{h}$&Rates&$\|\nabla_h\cdot\bm e_h\|_{h}$ &Rates\\ 
				\hline
				20& 5.5723e-01&    --&  5.0779e+00&     --&   1.0215e+02&       --& 1.0397e+00&--\\
				40& 1.3596e-01& 2.04&  1.2783e+00&  1.99&  5.1364e+01&   0.99& 2.5771e-01&2.01 \\
				80& 3.0980e-02& 2.13&  2.9407e-01&  2.12&  2.5433e+01&    1.01&  6.6357e-02& 1.96\\
				160& 7.3265e-03& 2.08&  6.8425e-02&  2.10&  1.2655e+01&   1.01&1.6913e-02 &1.97\\
				\hline
			\end{tabular}
		}
		\label{Table_con3}
		\vspace{12pt}
	\end{table}
	\begin{table}[htb]\centering\small
		\renewcommand\arraystretch{1.2}
		\caption{Numerical results for example 1  when  $\alpha_-=1,\ \alpha_+=100$, $\gamma=1$.}
		\setlength{\tabcolsep}{1.3mm}{
			\begin{tabular}{ccccccccc}
				\hline
				$h$&      $\|\boldsymbol{e}_h\|_{h}$&  Rates& $\|\nabla\times\boldsymbol{e}_h\|_{h}$&      Rates&   $\|(\nabla\times)^2\boldsymbol{e}_h\|_{h}$&Rates&$\|\nabla_h\cdot\bm e_h\|_{h}$ &Rates\\ 
				\hline
				20& 5.0266e-01&    --&  4.3608e+00&     --&   1.0063e+02&       --& 1.1272e+00&--\\
				40& 1.2842e-01& 1.97&  1.1656e+00&  1.90&  5.1248e+01&   0.97& 2.6023e-01&2.11 \\
				80& 3.0461e-02& 2.08&  2.7949e-01&  2.06&  2.5403e+01&    1.01&  6.6437e-02& 1.97\\
				160& 7.4058e-03& 2.04&  6.6803e-02&  2.06&  1.2647e+01&   1.01&1.6920e-02&1.97\\
				\hline
			\end{tabular}
		}
		\label{Table_con4}
		\vspace{12pt}
	\end{table}

\subsection{Example 1}\label{Examp1}
We consider the quad-curl interface equation \eqref{OriginProblem} with homogeneous boundary conditions.
Let the domain $\Omega=[-1,1]\times[-1,1]$, and the interface be the circle with center origin and radius $\frac{\pi}{6}$ (see Fig. \ref{InterfaceShape}). We determine the source term and the jump conditions $\varphi_3$, $\varphi_4$ according to the exact solution given below
\begin{equation}       
\boldsymbol{u}^+=\boldsymbol{u}^-=\left(                
\begin{array}{c}   %
3\pi\sin^3(\pi x)\sin^2(\pi y)\cos(\pi y)\\  
-3\pi\sin^3(\pi y)\sin^2(\pi x)\cos(\pi x)\\  
\end{array}
\right),  
\end{equation}
We present the numerical results for four groups of parameters: $\alpha_-=1$ and $\alpha_+=1, 2, 10$, or $100$, and $\gamma=1$. Additionally, the penalty parameter is set as $\lambda=100$.	
The finite element solution is denoted by $\bu_h$. To measure the error between the exact solution and the finite element solution, we denote
\begin{equation*}
\boldsymbol{e}_h=\bu-\bu_h.
\end{equation*}
Tables \ref{Table_con1}-\ref{Table_con4} illustrate various errors and convergence rates in different norms. From the tables, we can observe second-order convergence in $\|\bu-\bu_h\|$, $\|\nabla\times(\bu-\bu_h)\|$, and $\|\text{div}_h(\bu-\bu_h)\|$ for different values of $\alpha_+$. Additionally, we also get first-order convergence in $\|\nabla\times\nabla\times(\bu-\bu_h)\|$. These results are consistent with the theoretical analysis (Theorem \ref{EnergyEstimate}).

	\begin{figure}[h]
		\centering
		\subfigure[$\sum_i\|\bu -\bu_h\|_{\Omega_{i}}$]{
			\begin{minipage}[t]{0.47\linewidth}
				\centering
				\includegraphics[width=\textwidth]{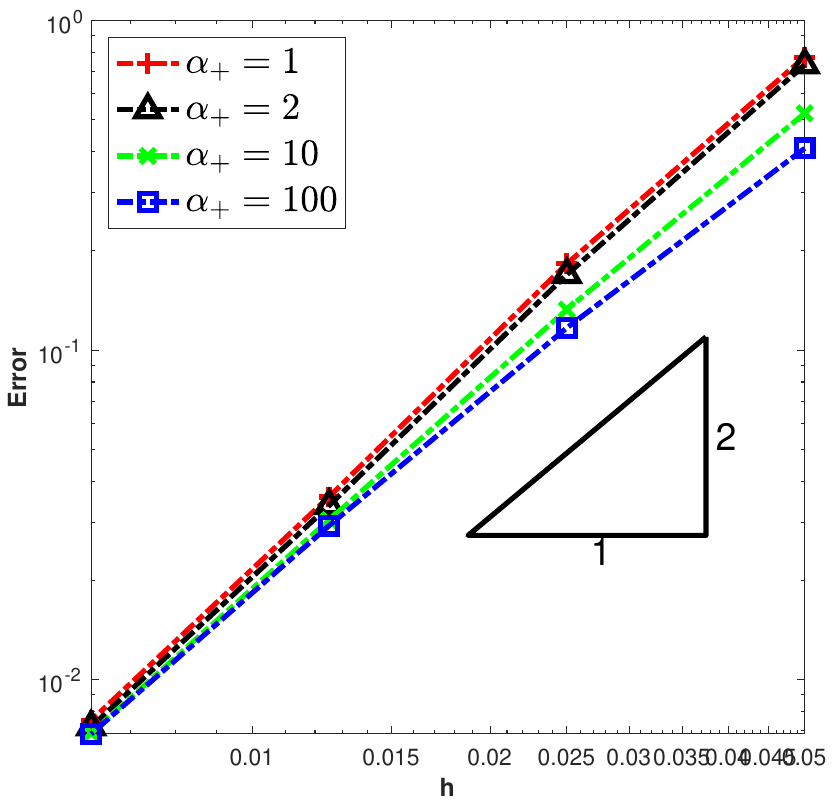}
			\end{minipage}
			\label{fig1b}
		}
		\subfigure[$\sum_i\|\text{curl}(\bu -\bu_h)\|_{\Omega_{i}}$]{
			\begin{minipage}[t]{0.47\linewidth}
				\centering
				\includegraphics[width=\textwidth]{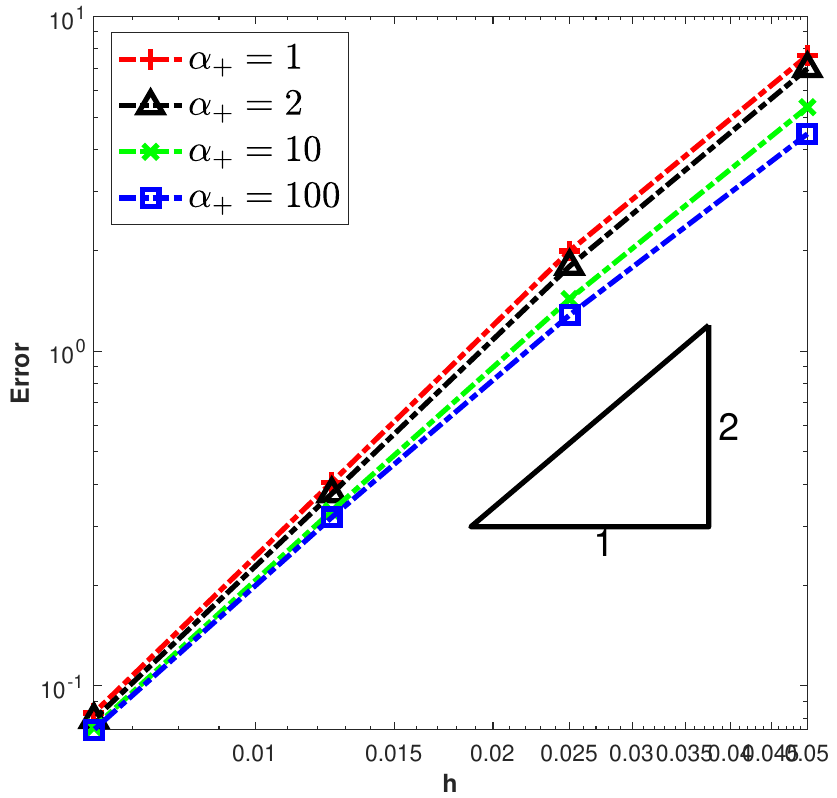}
			\end{minipage}
			\label{fig1a}
		}
		\subfigure[$\sum_i\|\text{curlcurl}(\bu -\bu_h)\|_{\Omega_{i}}$]{
			\begin{minipage}[t]{0.47\linewidth}
				\centering
				\includegraphics[width=\textwidth]{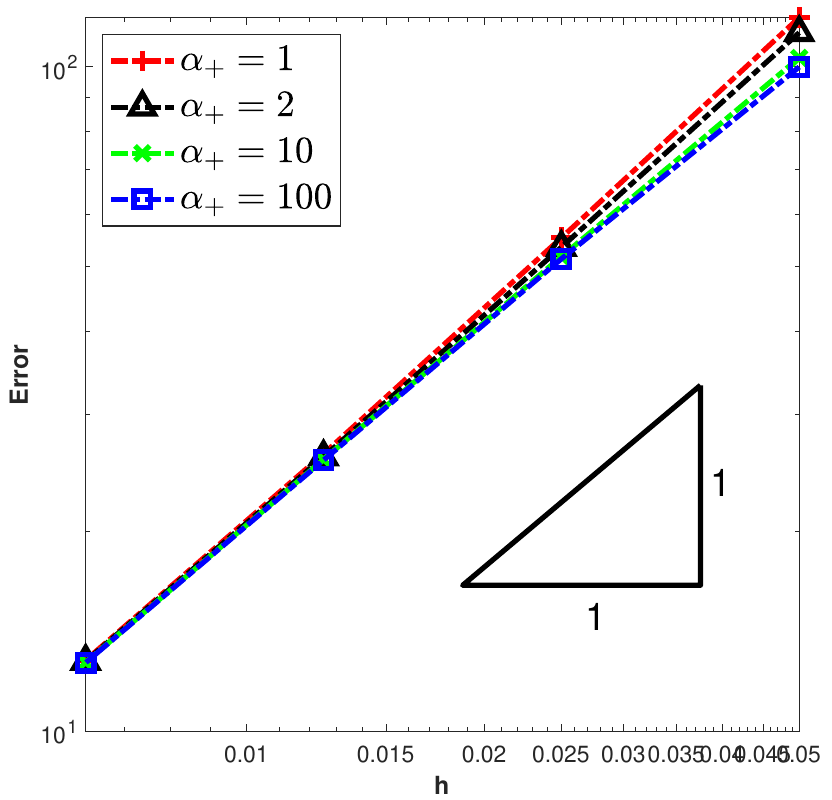}
			\end{minipage}
			\label{fig1c}
		}
		\subfigure[$\sum_i\|\text{div}_h(\bu -\bu_h)\|_{\Omega_{i}}$]{
			\begin{minipage}[t]{0.47\linewidth}
				\centering
				\includegraphics[width=\textwidth]{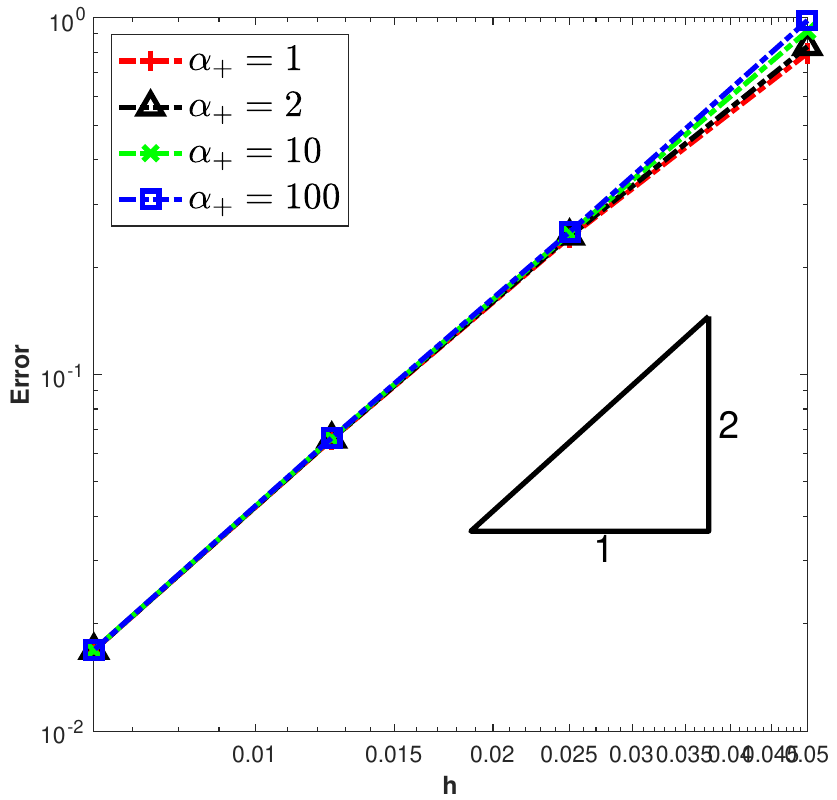}
			\end{minipage}
			\label{fig1d}
		}
		\caption{Numerical results for example 2 when $\alpha_+=1,\ 2,\ 10,\ 100$ and $\gamma=0.01$.}
		\label{Converfig1}
	\end{figure}

\subsection{Example 2}
In this example, we solve a problem on the domain $\Omega=[-1,1]\times[-1,1]$ with a peanut-shaped interface defined by the following parametric function
\begin{align*}
x=(\frac{1}{2}+\frac{\sin(2\theta)}{4})\cos(\theta),\\
y=(\frac{1}{2}+\frac{\sin(2\theta)}{4})\sin(\theta),
\end{align*}
where $0\leq\theta\leq 2\pi$ (see Fig. \ref{InterfaceShape}). The exact solution is the same as in Example \ref{Examp1}. We take the parameters $\alpha_-=1$ and $\alpha_+=1, 2, 10$ or $100$, and $\gamma=0.01$. Other parameters are the same as in Example \ref{Examp1}. Fig. \ref{Converfig1} demonstrates numerical errors in various norms, confirming our theoretical results.

\subsection{Example 3}
	In this particular instance, we investigate a problem within the same interface and domain as described in Example \ref{Examp1}. However, in this case, we only provide the right-hand side term $\bm f$ and the jump conditions, omitting the exact solution, which is closer to an actual problem scenario. We set the source term and the jump conditions as follows: $\bm f^+ = (10,0)^T,\ \bm f^-=(1,0)$, $\ \varphi_3 =2x+3y$, $\varphi_4 = -9y$ and use $\bm e_h= \bu_h-\bu_{\frac{h}{2}}$ to assess the convergence order in different norms. The numerical results are displayed in Table \ref{Table_examp3} with the parameters $\alpha_-=\alpha_+=1,\ \gamma=1$, and $\lambda=100$. The detected convergence rates are consistent with the theoretical analysis.
	\begin{table}[htb]\centering\small
		\renewcommand\arraystretch{1.2}
		\caption{Numerical results for example 3 when  $\alpha_-=1,\ \alpha_+=1$, $\gamma=1$.}
		\setlength{\tabcolsep}{1.3mm}{
			\begin{tabular}{ccccccccc}
				\hline
				$h$&      $\|\boldsymbol{e}_h\|_{h}$&  Rates& $\|\nabla\times\boldsymbol{e}_h\|_{h}$&      Rates&   $\|(\nabla\times)^2\boldsymbol{e}_h\|_{h}$&Rates&$\|\nabla_h\cdot\bm e_h\|_{h}$ &Rates\\ 
				\hline
				10& 1.8205e-02&    --&  5.2201e-02&     --&   6.5244e-01&       --& 9.6489e-03&--\\
				20& 4.3127e-03& 2.07&  1.4381e-02&  1.86&  3.2826e-01&   0.99& 2.9448e-03&1.71 \\
				40& 1.0120e-03& 2.09&  3.1824e-01&  2.18&  1.5992e-01&    1.04&  7.9537e-04& 1.89\\
				80& 2.5019e-04& 2.02&  6.7956e-02&  2.23&  7.8548e-02&   1.03&2.0710e-04&1.94\\
				\hline
			\end{tabular}
		}
		\label{Table_examp3}
		\vspace{12pt}
	\end{table}
\subsection{Example 4}
	In this subsection, our primary focus is to verify the robustness of the numerical scheme under the condition of $\gamma=0$. We take the following exact solution
	\begin{equation*}      
		\boldsymbol{u}^+=\boldsymbol{u}^-=\left(                
		\begin{array}{c}   %
		(x^2-1)^3(y^2-1)^2y\\  
		-(x^2-1)^2(y^2-1)^2x\\  
		\end{array}
		\right),  
	\end{equation*}
and the source term and the jump conditions $\varphi_3$, $\varphi_4$ can be calculated based on the exact solution. The domain, interface, and penalty parameter $\lambda$ remain consistent with the description in Example \ref{Examp1}. Table \ref{Table_con6}-\ref{Table_con7} present numerical results for the cases where $\alpha^+ = 10,100$, $\alpha_-=1$, along with $\gamma = 0$. The observed convergence order consistently corresponds to the theoretical analysis, verifying the robustness of the proposed numerical scheme.
\begin{table}[htb]\centering\small
	\renewcommand\arraystretch{1.2}
	\caption{Numerical results for example 4  when  $\alpha_-=1,\ \alpha_+=10$, $\gamma=0$.}
	\setlength{\tabcolsep}{1.3mm}{
		\begin{tabular}{ccccccccc}
			\hline
			$h$&      $\|\boldsymbol{e}_h\|_{h}$&  Rates& $\|\nabla\times\boldsymbol{e}_h\|_{h}$&      Rates&   $\|(\nabla\times)^2\boldsymbol{e}_h\|_{h}$&Rates&$\|\nabla_h\cdot\bm e_h\|_{h}$ &Rates\\ 
			\hline
			20& 1.4566e-02&    --&  5.9715e-02&     --&   1.1735e+00&       --& 1.0316e-02&--\\
			40& 3.2612e-03& 2.16&  1.3486e-02&  2.15&  5.8494e-01&   1.00& 2.5016e-03&2.04 \\
			80& 7.7849e-04& 2.07&  3.2357e-03&  2.06&  2.9219e-01&    1.00&  6.3291e-04& 1.98\\
			160& 1.9006e-04& 2.03&  7.8940e-04&  2.04&  1.4594e-01&   1.00&1.5986e-04&1.99\\
			\hline
		\end{tabular}
	}
	\label{Table_con6}
	\vspace{12pt}
\end{table}
\begin{table}[htb]\centering\small
	\renewcommand\arraystretch{1.2}
	\caption{Numerical results for example 4  when  $\alpha_-=1,\ \alpha_+=100$, $\gamma=0$.}
	\setlength{\tabcolsep}{1.3mm}{
		\begin{tabular}{ccccccccc}
			\hline
			$h$&      $\|\boldsymbol{e}_h\|_{h}$&  Rates& $\|\nabla\times\boldsymbol{e}_h\|_{h}$&      Rates&   $\|(\nabla\times)^2\boldsymbol{e}_h\|_{h}$&Rates&$\|\nabla_h\cdot\bm e_h\|_{h}$ &Rates\\ 
			\hline
			20& 1.3969e-02&    --&  5.6955e-02&     --&   1.1669e+00&       --& 1.0751e-02&--\\
			40& 3.4940e-03& 2.00&  1.4317e-02&  1.99&  5.8384e-01&   1.00& 2.5152e-03&2.10 \\
			80& 8.7304e-04& 2.00&  3.5905e-03&  2.00&  2.9197e-01&    1.00&  6.3324e-04& 1.99\\
			160& 2.2012e-04& 1.99&  9.0335e-04&  1.99&  1.4587e-01&   1.00&1.5993e-04&1.99\\
			\hline
		\end{tabular}
	}
	\label{Table_con7}
	\vspace{12pt}
\end{table}
\subsection{Condition Number}
In this investigation, we delve into the condition number analysis of the resulting stiffness matrix, an essential step in assessing its numerical stability. The calculated condition number, denoted as $\mathcal{K}(A)$, along with the magnitudes of $|A|$ and $|A^{-1}|$, pertaining to the sparse matrices derived from Example 3, are presented in Table \ref{Table_cond}. Notably, it becomes evident that as the mesh size diminishes toward zero, $\mathcal{K}(A)$, $|A|$, and $|A^{-1}|$ exhibit growth order of $O(h^{-6})$, $O(h^{-4})$, and $O(h^{-2})$, respectively. This observation aligns seamlessly with the theoretical analysis in Theorem \ref{conditionnumber}. 

	\begin{table}[htb]\centering\small
	\renewcommand\arraystretch{1.2}
	\caption{Condition number and matrix norm.}
	\setlength{\tabcolsep}{2.0mm}{
		\begin{tabular}{ccccccc}
			\hline
			$h$&      $\mathcal{K}(A)$&  Rates& $\|A\|$&      Rates&   $\|A^{-1}\|$&Rates\\ 
			\hline
			10& 2.9831e+10&    --&  8.2290e+09&     --&   3.6251e+00&       --\\
			20& 1.6108e+12& -5.75&  1.2802e+11&  -3.96&  1.2582e+01&   -1.80 \\
			40& 8.1416e+13& -5.66&  2.0382e+12&  -3.99&  3.9946e+01&    -1.67\\
			80& 4.6259e+15& -5.83&  3.2572e+13&  -4.00&  1.4202e+02&   -1.83\\
			\hline
		\end{tabular}
	}
	\label{Table_cond}
	\vspace{12pt}
\end{table}

\section{Conclusion}\label{Sec:conclusion} In this paper, we consider the quad-curl interface problem arising in Magnetohydrodynamics and Maxwell transmission problems with heterogeneous materials. We introduce the weak formulation for the quad-curl interface problem and demonstrate its well-posedness. To address the challenges posed by the interface, we propose an unfitted finite element method using the $H(\curl^2)$ conforming element and the idea of unfitted Nitsche's method without explicitly fitting the interface. We prove the optimal convergence rate for the numerical methods and validate the theoretical results with numerical examples. 

\section*{Acknowledgments}
This work is supported in part by the Andrew Sisson Fund, Dyason Fellowship, the Faculty Science Researcher Development Grant of the University of Melbourne,  the National Natural Science Foundation of China grants NSFC 12131005.

\section*{Declaration of competing interest}
The authors declare that they have no known competing financial interests or personal relationships that could have appeared to influence the work reported in this paper.
\bibliographystyle{siamplain}
\bibliography{references}

\begin{thebibliography}{10}

\bibitem{amrouche1998vector}
{\sc C.~Amrouche, C.~Bernardi, M.~Dauge, and V.~Girault}, {\em Vector
  potentials in three-dimensional non-smooth domains}, Mathematical Methods in
  the Applied Sciences, 21 (1998), pp.~823--864.

\bibitem{Arnold2018}
{\sc D.~N. Arnold}, {\em Finite element exterior calculus}, vol.~93 of CBMS-NSF
  Regional Conference Series in Applied Mathematics, Society for Industrial and
  Applied Mathematics (SIAM), Philadelphia, PA, 2018,
  \url{https://doi.org/10.1137/1.9781611975543.ch1},
  \url{https://doi.org/10.1137/1.9781611975543.ch1}.

\bibitem{Babuvska1970finite}
{\sc I.~Babu{\v{s}}ka}, {\em The finite element method for elliptic equations
  with discontinuous coefficients}, Computing, 5 (1970), pp.~207--213.

\bibitem{BadiaNe2022}
{\sc S.~Badia, E.~Neiva, and F.~Verdugo}, {\em Linking ghost penalty and
  aggregated unfitted methods}, Comput. Methods Appl. Mech. Engrg., 388 (2022),
  pp.~Paper No. 114232, 23.

\bibitem{Barrett1987fitted}
{\sc J.~W. Barrett and C.~M. Elliott}, {\em Fitted and unfitted finite-element
  methods for elliptic equations with smooth interfaces}, IMA journal of
  numerical analysis, 7 (1987), pp.~283--300.

\bibitem{Biskamp1996magnetic}
{\sc D.~Biskamp}, {\em Magnetic reconnection in plasmas}, Astrophysics and
  Space Science, 242 (1996), pp.~165--207.

\bibitem{BrennerScott2008}
{\sc S.~C. Brenner and L.~R. Scott}, {\em The mathematical theory of finite
  element methods}, vol.~15 of Texts in Applied Mathematics, Springer, New
  York, third~ed., 2008.

\bibitem{Burman2015}
{\sc E.~Burman, S.~Claus, P.~Hansbo, M.~G. Larson, and A.~Massing}, {\em
  Cut{FEM}: discretizing geometry and partial differential equations},
  Internat. J. Numer. Methods Engrg., 104 (2015), pp.~472--501.

\bibitem{Cai2021nitsche}
{\sc Y.~Cai, J.~Chen, and N.~Wang}, {\em A nitsche extended finite element
  method for the biharmonic interface problem}, Computer Methods in Applied
  Mechanics and Engineering, 382 (2021), p.~113880.

\bibitem{Cakoni2010inverse}
{\sc F.~Cakoni, D.~Colton, P.~Monk, and J.~Sun}, {\em The inverse electro
  scattering problem for anisotropic media}, Inverse Problems, 26 (2010),
  p.~074004.

\bibitem{Cakoni2007variational}
{\sc F.~Cakoni, H.~Haddar, et~al.}, {\em A variational approach for the
  solution of the electromagnetic interior transmission problem for anisotropic
  media}, Inverse Problems and Imaging, 1 (2007), p.~443.

\bibitem{Calder1961}
{\sc A.-P. Calder\'{o}n}, {\em Lebesgue spaces of differentiable functions and
  distributions}, in Proc. {S}ympos. {P}ure {M}ath., {V}ol. {IV}, American
  Mathematical Society, Providence, R.I., 1961, pp.~33--49.

\bibitem{Chen1998finite}
{\sc Z.~Chen and J.~Zou}, {\em Finite element methods and their convergence for
  elliptic and parabolic interface problems}, Numerische Mathematik, 79 (1998),
  pp.~175--202.

\bibitem{Ci2002}
{\sc P.~G. Ciarlet}, {\em The finite element method for elliptic problems},
  vol.~40 of Classics in Applied Mathematics, Society for Industrial and
  Applied Mathematics (SIAM), Philadelphia, PA, 2002.
\newblock Reprint of the 1978 original [North-Holland, Amsterdam; MR0520174 (58
  \#25001)].

\bibitem{Ern2004}
{\sc A.~Ern and J.-L. Guermond}, {\em Theory and practice of finite elements},
  vol.~159 of Applied Mathematical Sciences, Springer-Verlag, New York, 2004.

\bibitem{Ern2005}
{\sc A.~Ern and J.-L. Guermond}, {\em Evaluation of the condition number in
  linear systems arising in finite element approximations}, M2AN Math. Model.
  Numer. Anal., 40 (2006), pp.~29--48,
  \url{https://doi.org/10.1051/m2an:2006006},
  \url{https://doi.org/10.1051/m2an:2006006}.

\bibitem{Evans2008}
{\sc L.~C. Evans}, {\em Partial differential equations}, vol.~19 of Graduate
  Studies in Mathematics, American Mathematical Society, Providence, RI,
  second~ed., 2010.

\bibitem{GuoYangZhu2021}
{\sc H.~Guo, X.~Yang, and Y.~Zhu}, {\em Unfitted {N}itsche's method for
  computing band structures of phononic crystals with periodic inclusions},
  Comput. Methods Appl. Mech. Engrg., 380 (2021), pp.~Paper No. 113743, 17.

\bibitem{GuoYangZ2021}
{\sc H.~Guo, X.~Yang, and Y.~Zhu}, {\em Unfitted {N}itsche's method for
  computing wave modes in topological materials}, J. Sci. Comput., 88 (2021),
  pp.~Paper No. 24, 28.

\bibitem{Hansbo2001}
{\sc A.~Hansbo and P.~Hansbo}, {\em An unfitted finite element method, based on
  {N}itsche's method, for elliptic interface problems}, Comput. Methods Appl.
  Mech. Engrg., 191 (2002), pp.~5537--5552,
  \url{https://doi.org/10.1016/S0045-7825(02)00524-8},
  \url{https://doi.org/10.1016/S0045-7825(02)00524-8}.

\bibitem{Hansbo2004finite}
{\sc A.~Hansbo and P.~Hansbo}, {\em A finite element method for the simulation
  of strong and weak discontinuities in solid mechanics}, Computer methods in
  applied mechanics and engineering, 193 (2004), pp.~3523--3540.

\bibitem{Hansbo2014}
{\sc P.~Hansbo, M.~G. Larson, and S.~Zahedi}, {\em A cut finite element method
  for a {S}tokes interface problem}, Appl. Numer. Math., 85 (2014),
  pp.~90--114.

\bibitem{Hong2012discontinuous}
{\sc Q.~Hong, J.~Hu, S.~Shu, and J.~Xu}, {\em A discontinuous galerkin method
  for the fourth-order curl problem}, Journal of Computational Mathematics,
  (2012), pp.~565--578.

\bibitem{HuZhangZhang2020}
{\sc K.~Hu, Q.~Zhang, and Z.~Zhang}, {\em Simple curl-curl-conforming finite
  elements in two dimensions}, SIAM J. Sci. Comput., 42 (2020),
  pp.~A3859--A3877.

\bibitem{Hu2022family}
{\sc K.~Hu, Q.~Zhang, and Z.~Zhang}, {\em A family of finite element stokes
  complexes in three dimensions}, SIAM Journal on Numerical Analysis, 60
  (2022), pp.~222--243.

\bibitem{Huang2023nonconforming}
{\sc X.~Huang}, {\em Nonconforming finite element stokes complexes in three
  dimensions}, Science China Mathematics,  (2023), pp.~1--24.

\bibitem{Leveque1994immersed}
{\sc R.~J. LeVeque and Z.~Li}, {\em The immersed interface method for elliptic
  equations with discontinuous coefficients and singular sources}, SIAM Journal
  on Numerical Analysis, 31 (1994), pp.~1019--1044.

\bibitem{Li2023reconstructed}
{\sc R.~Li, Q.~Liu, and F.~Yang}, {\em A reconstructed discontinuous
  approximation on unfitted meshes to h (curl) and h (div) interface problems},
  Computer Methods in Applied Mechanics and Engineering, 403 (2023), p.~115723.

\bibitem{Li1998immersed}
{\sc Z.~Li}, {\em The immersed interface method using a finite element
  formulation}, Applied Numerical Mathematics, 27 (1998), pp.~253--267.

\bibitem{LiLinWu2003}
{\sc Z.~Li, T.~Lin, and X.~Wu}, {\em New {C}artesian grid methods for interface
  problems using the finite element formulation}, Numer. Math., 96 (2003),
  pp.~61--98.

\bibitem{Liu2020interface}
{\sc H.~Liu, L.~Zhang, X.~Zhang, and W.~Zheng}, {\em Interface-penalty finite
  element methods for interface problems in h1, h (curl), and h (div)},
  Computer Methods in Applied Mechanics and Engineering, 367 (2020), p.~113137.

\bibitem{Massing}
{\sc A.~Massing, M.~G. Larson, A.~Logg, and M.~E. Rognes}, {\em A stabilized
  {N}itsche fictitious domain method for the {S}tokes problem}, J. Sci.
  Comput., 61 (2014), pp.~604--628,
  \url{https://doi.org/10.1007/s10915-014-9838-9},
  \url{https://doi.org/10.1007/s10915-014-9838-9}.

\bibitem{Monk2003finite}
{\sc P.~Monk et~al.}, {\em Finite element methods for Maxwell's equations},
  Oxford University Press, 2003.

\bibitem{Monk2012finite}
{\sc P.~Monk and J.~Sun}, {\em Finite element methods for maxwell's
  transmission eigenvalues}, SIAM Journal on Scientific Computing, 34 (2012),
  pp.~B247--B264.

\bibitem{Peskin1977}
{\sc C.~S. Peskin}, {\em Numerical analysis of blood flow in the heart}, J.
  Computational Phys., 25 (1977), pp.~220--252.

\bibitem{Stein1966}
{\sc E.~M. Stein}, {\em Singular integrals and differentiability properties of
  functions}, Princeton Mathematical Series, No. 30, Princeton University
  Press, Princeton, N.J., 1970.

\bibitem{Sun2016mixed}
{\sc J.~Sun}, {\em A mixed fem for the quad-curl eigenvalue problem},
  Numerische Mathematik, 132 (2016), pp.~185--200.

\bibitem{Wang2019new}
{\sc C.~Wang, Z.~Sun, and J.~Cui}, {\em A new error analysis of a mixed finite
  element method for the quad-curl problem}, Applied Mathematics and
  Computation, 349 (2019), pp.~23--38.

\bibitem{Xu1982}
{\sc J.~Xu}, {\em Error estimates of the finite element method for the 2nd
  order elliptic equations with discontinuous coefficients}, J. Xiangtan Univ.,
  1 (1982), pp.~1--5.

\bibitem{Zhang2019h}
{\sc Q.~Zhang, L.~Wang, and Z.~Zhang}, {\em H (curl \^{}2)-conforming finite
  elements in 2 dimensions and applications to the quad-curl problem}, SIAM
  Journal on Scientific Computing, 41 (2019), pp.~A1527--A1547.

\bibitem{Zhang2022nonconforming}
{\sc Q.~Zhang, M.~Zhang, and Z.~Zhang}, {\em Nonconforming finite elements for
  the brinkman and $-\curl\triangle\curl$ problems on cubical meshes}, arXiv
  preprint arXiv:2206.08493,  (2022).

\bibitem{Zhang2020family}
{\sc Q.~Zhang and Z.~Zhang}, {\em A family of curl-curl conforming finite
  elements on tetrahedral meshes}, CSIAM Transactions on Applied Mathematics, 1
  (2020), pp.~639--663.

\bibitem{Zhang2018mixed}
{\sc S.~Zhang}, {\em Mixed schemes for quad-curl equations}, ESAIM:
  Mathematical Modelling and Numerical Analysis, 52 (2018), pp.~147--161.

\bibitem{Zheng2011nonconforming}
{\sc B.~Zheng, Q.~Hu, and J.~Xu}, {\em A nonconforming finite element method
  for fourth order curl equations in $\mathbb{R}$$^3$}, Mathematics of
  computation, 80 (2011), pp.~1871--1886.

\end{thebibliography}

\end{document}